\documentclass[11pt]{article}
\usepackage[latin1]{inputenc}
\usepackage{amsmath,amsthm,amssymb,amsfonts,amscd}
\usepackage{latexsym}
\usepackage{graphicx}
\usepackage{mathrsfs}
\usepackage{cite}
\usepackage[title]{appendix}

\usepackage{enumitem}
\usepackage[colorlinks=true,urlcolor=blue,
citecolor=red,linkcolor=blue,linktocpage,pdfpagelabels,
bookmarksnumbered,bookmarksopen]{hyperref}

\makeatletter \@addtoreset{equation}{section} \makeatother

\newtheorem{theorem}{Theorem}[section]

\newtheorem{proposition}{Proposition}[section]
\newtheorem{lemma}{Lemma}[section]
\newtheorem{remark}{Remark}[section]

\allowdisplaybreaks

\begin{document}
\title{{Optimal Stability Bounds, Minimizers, and Critical-Point Stability for a Critical Nonlocal Sobolev Inequality on the Heisenberg Group}}

\author{Wenjing Chen\footnote{E-mail address:\, {\tt wjchen@swu.edu.cn} (W. Chen), {\tt zxwangmath@163.com} (Z. Wang).}\ \ and Zexi Wang\footnote{Corresponding author.} \\
\footnotesize School of Mathematics and Statistics, Southwest University,
Chongqing, 400715, P.R. China}

\date{ }
\maketitle

\begin{abstract}
\begingroup
We investigate the optimal Bianchi--Egnell-type quantitative stability constant for the critical nonlocal Sobolev inequality on the Heisenberg group $\mathbb{H}^{n}$,
\begin{equation*}
 S_{HL}(Q,\mu)\left(\int_{\mathbb{H}^{n}}\int_{\mathbb{H}^{n}}
 \frac{|u(\xi)|^{Q^{\ast}_{\mu}}|u(\eta)|^{Q^{\ast}_{\mu}}}
 {|\eta^{-1}\xi|^{\mu}}\,d\xi d\eta\right)^{\frac{1}{Q^{\ast}_{\mu}}}
 \leq \int_{\mathbb{H}^{n}}|\nabla_{\mathbb{H}}u|^{2}d\xi,
 \qquad u\in S^{1,2}(\mathbb{H}^{n}),
\end{equation*}
where $Q=2n+2$, $n\ge1$, $0<\mu<Q$, and $Q^{\ast}_{\mu}=(2Q-\mu)/(Q-2)$. Let $\mathfrak M$ be the manifold of Jerison--Lee bubbles and let $H_{NS}$ denote the optimal deficit-to-distance constant. After the Cayley transform, the second variation is governed by two coupled spectral forms on complex spherical harmonics. Their simultaneous diagonalization yields an explicit local spectral threshold, while a two-bubble expansion gives a second threshold associated with dichotomy.

To obtain strictness at the local threshold, we introduce a second-order normal correction and expand the nonlocal functional to fourth order. This produces an explicit coefficient $\Gamma_{n,\mu}$ such that $\Gamma_{n,\mu}<0$ implies $H_{NS}<H_{NS}^{\mathrm{spec}}$; in particular, an explicit algebraic calculation gives $\Gamma_{n,2}<0$ for every $n\ge1$. Together with concentration--compactness, the strict spectral and two-bubble bounds yield attainment of $H_{NS}$ when $0<\mu\le4$ and $\Gamma_{n,\mu}<0$; a separate comparison with the local Sobolev stability threshold gives attainment in the additional large-$\mu$ range.
The minimizer gives the strict comparison $H_{NS}>H_{BE}$ with the optimal constant for the local Folland--Stein--Sobolev inequality. We also prove that the sharp universal upper defect-to-distance constant is $1$. Finally, for the associated Euler--Lagrange equation, we introduce the optimal single-bubble residual quotient and establish the local spectral estimate $H_{CP}(1)\le H_{NS}^{\mathrm{spec}}$.
\endgroup

\smallskip
\emph{\bf Keywords:} Heisenberg group; nonlocal Sobolev inequality; quantitative stability; strict upper bounds; existence of minimizers.

\emph{\bf 2020 Mathematics Subject Classification:} 35B35; 35P30; 35A23; 45E10.

\end{abstract}

\tableofcontents

\section{Introduction}

{We begin with the Euclidean model that motivates the stability problem. For $N\geq3$, the sharp Sobolev inequality states}
\begin{equation}\label{Sobolev inequality}
\|\nabla u\|_{L^{2}(\mathbb{R}^{N})}^{2} \geq \mathcal{S}(N)\|u\|_{L^{2^{\ast}}(\mathbb{R}^{N})}^{2},\quad \forall \, u\in{D}^{1,2}(\mathbb{R}^{N}),
 \end{equation}
where ${\mathcal{S}(N)}=\pi N(N-2)\Big(\frac{\Gamma(\frac{N}{2})}{\Gamma(N)}\Big)^{\frac{2}{N}}$, $2^{\ast}=\frac{2N}{N-2}$, and $D^{1,2}(\mathbb{R}^N)$ denotes the completion of $C_c^\infty(\mathbb{R}^{N})$ under the norm $\|\nabla u\|_{L^{2}(\mathbb{R}^{N})}$.
{Aubin \cite{A76} and Talenti \cite{T76}, using variational and rearrangement arguments, showed that equality in \eqref{Sobolev inequality} holds precisely for the Aubin-Talenti bubbles}
\begin{equation*}
\mathcal{U}_{\lambda,a}(x)=\lambda^{\frac{N-2}{2}}\mathcal{U}(\lambda(x-a)),~~\lambda>0,~a\in\mathbb{R}^{N},
\end{equation*}
{where $\mathcal{U}(x)= [N(N-2)]^{\frac{N-2}{4}}(1+|x|^{2})^{-\frac{N-2}{2}}$ is, up to translations and scalings, the unique positive solution of the Euler--Lagrange equation associated with \eqref{Sobolev inequality}:}
\begin{equation*}
 -\Delta u=|u|^{2^{\ast}-2}u,\qquad \mathrm{in}~\mathbb{R}^{N}.
\end{equation*}

{Let $\mathcal{M}=\{c\,\mathcal{U}_{\lambda,a}:c\in\mathbb{R},\lambda>0,a\in\mathbb{R}^{N}\}$ be the manifold of Talenti bubbles. Quantitative stability asks whether the Sobolev deficit controls the squared distance to $\mathcal{M}$. This question was raised by Brezis and Lieb \cite{BL85}, who asked whether an estimate of the following form holds for a suitable distance $\omega$:}
\begin{equation*}
 \|\nabla u\|_{L^{2}(\mathbb{R}^{N})}^{2}-\mathcal{S}(N)\|u\|_{L^{2^{\ast}}(\mathbb{R}^{N})}^{2}\geq c \ \omega(u,\mathcal{M})^2,
 \quad \forall \, u\in{D}^{1,2}(\mathbb{R}^{N}).
\end{equation*}
{Bianchi and Egnell \cite{BE91} answered this question affirmatively by combining local spectral analysis with a global compactness argument:}
\begin{equation*}
 \|\nabla u\|_{L^{2}(\mathbb{R}^{N})}^{2}-\mathcal{S}(N)\|u\|_{L^{2^{\ast}}(\mathbb{R}^{N})}^{2}\geq c_{BE} ~ \mathrm{dist}(u,\mathcal{M})^2,
 \quad \forall \, u\in{D}^{1,2}(\mathbb{R}^{N}),
\end{equation*}
{here $c_{BE}>0$ is the optimal stability constant and $\mathrm{dist}(u,\mathcal{M})=\inf_{c,\lambda,a}\|u-c\, \mathcal{U}_{\lambda,a}\|_{D^{1,2}}$. The same paradigm has since been developed for second-order Sobolev inequalities \cite{LW2000}, fractional Sobolev inequalities \cite{CFW13}, and $p$-Laplacian problems \cite{FN19,FZ22,N20}. Local expansion around a bubble gives the spectral upper bound}
\begin{equation*}
 0<c_{BE}\leq c_{BE}^{\mathrm{spec}}:=1-\frac{\nu_1}{\nu_2}=\frac{4}{N+4}.
\end{equation*}
Here, $\nu_1$ and $\nu_2$ denote the second and third eigenvalues of the eigenvalue problem
\begin{equation*}
 -\Delta \varphi=\nu \mathcal{U}^{2^{\ast}-2}\varphi,\quad\mathrm{in}~\mathbb{R}^{N},~~\varphi\in D^{1,2}(\mathbb{R}^{N})\setminus \{0\},
\end{equation*}
whose eigenvalues satisfy the ordering $\nu_0=1<\nu_1=2^{\ast}-1<\nu_2=\frac{(N+2)(N+4)}{N(N-2)}<\cdots<\nu_k\rightarrow+\infty$ as $k\rightarrow+\infty$.
{On the lower-bound side, Dolbeault et al. \cite{DEFF25} obtained the first explicit dimension-dependent estimate}
$ c_{BE}\geq \frac{\beta}{N}$
for some explicitly given constant $\beta>0$. Related results concerning higher-order and fractional-order counterparts can be found in \cite{CLT25,CLT24,CLT24'}.
{On the upper-bound and compactness side, a refined expansion of the Bianchi--Egnell quotient along a carefully chosen perturbation allowed}
K\"{o}nig \cite{K23} {to prove that $c_{BE}<c_{BE}^{\mathrm{spec}}$. He subsequently established} \cite{K25}
\begin{equation*}
 c_{BE}< c_{BE}^{2\text{-peak}}:=1-2^\frac{N-2}{N},
\end{equation*}
{and proved that the infimum defining $c_{BE}$ is attained. Related stability-minimizer results include} the Caffarelli--Kohn--Nirenberg inequality \cite{DT23,DTW25,WW24}, the Hardy-Sobolev inequality \cite{CGK25}, and the fractional Sobolev trace inequality \cite{ZZZ25}.

{The nonlocal analogue is generated by the Hardy--Littlewood--Sobolev (HLS) inequality \cite{HL28,L83,LL01}. Suppose that} $N\geq 1$, $0<\mu< N$ and $t,r>1$ with $\frac{1}{t}+\frac{\mu}{N}+\frac{1}{r}=2$, $f\in L^{t}(\mathbb{R}^{N})$ and $h\in L^{r}(\mathbb{R}^{N})$. Then there exists a constant ${C}(N,\mu,t,r)>0$ independent of $f$ and $h$ such that
\begin{equation}\label{HLS}
 \displaystyle\int_{\mathbb{R}^N}\int_{\mathbb{R}^N}\frac{f(x)h(y)}{|x-y|^\mu}dxdy\leq C(N,\mu,t,r)\|f\|_{L^t(\mathbb{R}^N)}\|h\|_{L^r(\mathbb{R}^N)}.
\end{equation}
If $t=r=\frac{2N}{2N-\mu}$, then ${C}(N,\mu,t,r)=C(N,\mu)=\frac{\pi^{\frac{\mu}{2}}\Gamma(\frac{N-\mu}{2})}{\Gamma(
 \frac{2N-\mu}{2})}\Big(\frac{\Gamma(N)}{\Gamma(\frac{N}{2})}\Big)^{\frac{N-\mu}{N}}$,
and there is equality in \eqref{HLS} if and only if $f\equiv (\mathrm{const}.)h$ and $h(x)=c\big(\frac{\lambda}{1+\lambda^{2}|x-a|^{2}}\big)
^{\frac{2N-\mu}{2}}$ for some $c\in \mathbb{R}$, $\lambda>0$, $a\in\mathbb{R}^{N}$,
where
$\Gamma$ denotes the Gamma function $\Gamma(\gamma)=\int_0^{+\infty}t^{\gamma-1}e^{-t}dt$ for $\gamma>0$.

{This sharp inequality naturally leads to a quantitative remainder problem. Through a duality principle based on quantitative convexity, Carlen \cite{C17} transferred stability from fractional Sobolev inequalities to the HLS inequality. In particular,} for all $N\geq3$ and $0<\mu<N$, there exists a constant $k_{BE}>0$ such that
\begin{equation*}
C(N,\mu)\|u\|^2_{L^{\frac{2N}{2N-\mu}}(\mathbb{R}^N)}-
\int_{\mathbb{R}^{N}}\int_{\mathbb{R}^{N}}
\frac{u(x)u(y)}{|x-y|^\mu}dxdy
\geq k_{BE} \inf\limits_{v\in \mathcal{M}_\mu} \|u-v\|^2_{L^{\frac{2N}{2N-\mu}}(\mathbb{R}^N)},
 \quad \forall \, u\in L^{\frac{2N}{2N-\mu}}(\mathbb{R}^{N}),
\end{equation*}
where $\mathcal{M}_\mu=\Big\{c\big(\frac{\lambda}{1+\lambda^{2}|x-a|^{2}}\big)
^{\frac{2N-\mu}{2}}:c\in \mathbb{R},\lambda>0,a\in\mathbb{R}^{N}\Big\}$
is the manifold of all extremal functions for the HLS inequality. In \cite{CLT24'}, Chen et al. explored the explicit lower bound of $k_{BE}$. Furthermore, in \cite{CLT25,CLT24}, they established the
optimal asymptotic lower bound of $k_{BE}$ as the dimension $N\rightarrow+\infty$.

{Combining \eqref{HLS} with \eqref{Sobolev inequality} shows that the nonlocal energy} $\int_{\mathbb{R}^{N}}\int_{\mathbb{R}^{N}} \frac{|u(x)|^p|u(y)|^p}{|x-y|^\mu}dxdy$
is well defined in $D^{1,2}(\mathbb{R}^{N})$ if
$p\in\big[\frac{2N-\mu}{N},\frac{2N-\mu}{N-2}\big]$.
{Accordingly, $2^{\ast}_{\mu}=(2N-\mu)/(N-2)$ is called the upper critical exponent. At this endpoint, one obtains the sharp nonlocal Sobolev inequality}
\begin{equation}\label{NonS}
 \mathcal{S}_{HL}(N,\mu) \left(
\int_{\mathbb{R}^{N}}\int_{\mathbb{R}^{N}}
\frac{|u(x)|^{2^{\ast}_{\mu}}|u(y)|^{2^{\ast}_{\mu}}}{|x-y|^\mu}dxdy
\right)^{\frac{1}{2^{\ast}_{\mu}}}\leq \int_{\mathbb{R}^{N}}|\nabla u|^{2}d x,\quad \forall \, u\in D^{1,2}(\mathbb{R}^{N}),
\end{equation}
where the best constant is given by $\mathcal{S}_{HL}(N,\mu)=\mathcal{S}(N)C(N,\mu)^{-\frac{1}{2^{\ast}_{\mu}}}$.
The associated Euler--Lagrange equation becomes
\begin{equation}\label{limit2'}
 -\Delta u=\left(\int_{\mathbb{R}^{N}}\frac{|u(y)|
^{2^{\ast}_{\mu}}}{|x-y|^{\mu}}dy \right)|u|^{2^{\ast}_{\mu}-2}u,~~u\in D^{1,2}(\mathbb{R}^{N}).
\end{equation}
{For \eqref{NonS}, Deng et al. \cite{DTYZ23} proved a Bianchi--Egnell-type estimate, using the classification of positive solutions \cite{DY19,GHPS19} and the nondegeneracy of the bubble \cite{LLTX23} for \eqref{limit2'}. More precisely,}
if $N\geq3$, $0<\mu<N$ with
 $\mu\leq 4$, there are constants $c_{UB}>c_{NS}>0$ such that
 for every $u\in{D}^{1,2}(\mathbb{R}^{N})$,
\begin{equation*}
c_{UB} ~ \mathrm{dist}(u,\mathcal{M})^2\geq \int_{\mathbb{R}^{N}}|\nabla u|^{2}{d}x-\mathcal{S}_{HL}(N,\mu)\left(
\int_{\mathbb{R}^{N}}\int_{\mathbb{R}^{N}}
\frac{|u(x)|^{2^{\ast}_{\mu}}|u(y)|^{2^{\ast}_{\mu}}}{|x-y|^\mu}dxdy
\right)^{\frac{1}{2^{\ast}_{\mu}}}\geq c_{NS} ~ \mathrm{dist}(u,\mathcal{M})^2.
\end{equation*}
Subsequent studies have extended this result to the second Sobolev inequality \cite{YZ25} and the fractional Sobolev framework \cite{DLYZ25,LYZ25}.
More recently, Zhang \cite{Z26} proved strict spectral and two-bubble upper bounds for the Euclidean nonlocal quotient and established existence of an extremizer for the optimal stability constant. See also \cite{DT25}.

{We now turn to the Heisenberg group $\mathbb{H}^{n}$, whose notation is summarized in Section \ref{secnp}. Folland and Stein \cite{FS74} established the sharp Sobolev inequality}
\begin{equation}\label{folland-stein}
 \int_{\mathbb{H}^{n}}|\nabla_{\mathbb{H}}u|^{2}{d}\xi\geq S(Q)\left(\int_{\mathbb{H}^{n}}|u|^{Q^{\ast}}{d}\xi\right)^{\frac{2}{Q^{\ast}}},\quad \forall \, u\in S^{1,2}(\mathbb{H}^{n}).
\end{equation}
Moreover, it follows from Jerison and Lee \cite{JL88} that \[S(Q)=\frac{4\pi n^2}{(n!4^n)^{\frac{1}{n+1}}},\]
{and the extremals are precisely given by the following family of Jerison--Lee bubbles:}
\begin{equation}\label{gU}
 \mathfrak{g}_{\lambda,\xi_0}U(\xi)=\lambda^{\frac{Q-2}{2}}U(\delta_\lambda(\tau_{{\xi_0}^{-1}}(\xi))),
 ~~\lambda>0,~\xi_{0}\in\mathbb{H}^{n},
\end{equation}
{where the group actions are defined in Section \ref{secnp} and} \[U(\xi)=U(z,t)=\frac{(2n)^n}{[(1+|z|^{2})^{2}+t^{2}]^{\frac{Q-2}{4}}}\]
is the unique (up to translations and scalings, i.e., $\mathfrak{g}_{\lambda,\xi}U$ for all $\lambda>0$ and $\xi\in\mathbb{H}^{n}$)
positive solution of the Euler--Lagrange equation
\begin{equation*}
 -\Delta_{\mathbb{H}} u=|u|^{Q^{\ast}-2}u,~~\xi\in\mathbb{H}^{n}.
\end{equation*}
{Using the correspondence between spectral problems on $\mathbb{H}^{n}$ and on the CR sphere \cite{MU02}, Loiudice \cite{L05} established quantitative stability for \eqref{folland-stein}: there exists $H_{BE}>0$ such that}
\begin{equation*}
 \int_{\mathbb{H}^{n}}|\nabla_{\mathbb{H}}u|^{2}{d}\xi- S(Q)\left(\int_{\mathbb{H}^{n}}|u|^{Q^{\ast}}{d}\xi\right)^{\frac{2}{Q^{\ast}}}\geq H_{BE} ~ \mathrm{dist}(u,\mathfrak{M})^2,\quad \forall \, u\in S^{1,2}(\mathbb{H}^{n}),
\end{equation*}
 where $\mathfrak{M}=\big\{c\mathfrak{g}_{\lambda,\xi}U: c\in \mathbb{R},\lambda>0,\xi\in \mathbb{H}^n\big\}$ denotes the manifold of Jerison--Lee bubbles, and the distance is defined as $\mathrm{dist}(u,\mathfrak{M})=\inf_{c\in \mathbb{R},\lambda>0,\xi\in \mathbb{H}^n}\|u-c\mathfrak{g}_{\lambda,\xi}U\|_{S^{1,2}(\mathbb{H}^{n})}$.
 A fractional counterpart can be found in \cite{LZ15}. {For the optimal constant $H_{BE}$, Tang, Zhang and Zhang \cite{TZZ24} proved strict upper bounds at both possible compactness thresholds. The spectral threshold is}
\begin{equation*}
 H_{BE}^{\mathrm{spec}}=\frac{2}{n+4},
\end{equation*}
{and the two-bubble threshold is}
\begin{equation*}
 H_{BE}^{2\text{-peak}}=2-2^{\frac{n}{n+1}}.
\end{equation*}
{These strict inequalities provide the compactness input for attainment of $H_{BE}$. An explicit lower bound was later obtained through a CR Yamabe-flow approach in \cite{CLTW25}.}

\begingroup
The preceding literature isolates the gap addressed in this paper. The Euclidean nonlocal problem has analogous spectral and dichotomy thresholds together with an attainment theory \cite{DT25,Z26}, while the local Heisenberg problem has a related variational structure \cite{TZZ24}. Li \cite{Li26} recently showed that a fourth-order normal perturbation can improve a local spectral threshold when the cubic contribution vanishes. In the present nonlocal Heisenberg setting, however, the HLS double integral introduces additional mode-dependent multipliers, so the fourth-order coefficients and the optimal correction must be recomputed.

Three points require particular care. First, after the Cayley transform, the linearization at a bubble is governed by two coupled eigenvalue sequences on the complex spherical harmonics $\mathcal H_{i,j}^{n+1}$. Second, the cubic moment of the lowest real normal mode vanishes by the $U(1)$-symmetry; consequently, strictness at the local spectral threshold requires a fourth-order expansion with an optimized second-order normal correction. Third, the compactness argument requires a precise expansion of the interaction between translated and dilated bubbles through the kernel $|\eta^{-1}\xi|^{-\mu}$.

Accordingly, the paper has four objectives: to identify the local spectral and two-bubble compactness thresholds; to prove attainment of $H_{NS}$ in the parameter ranges stated below; to establish the strict comparison $H_{NS}>H_{BE}$ and the sharp universal upper defect constant $1$; and to study the corresponding critical-point residual quotient. The precise statements and the main technical ingredients are given in Section~\ref{sec main}.
\endgroup

\section{Notation and preliminaries}\label{secnp}

The Heisenberg group $\mathbb{H}^{n}$ is $\mathbb{C}^{n}\times\mathbb{R}$ with elements $\xi=(\xi_{l})=(z,t)$, $\xi'=(\xi'_{l})=(z',t')$, $1\leq l\leq 2n+1$, and group operation \[\xi'\xi=\big(z+z',t+t'+2\mathrm{Im}z'\cdot\bar{z}\big).\]
The left translations are given by $\tau_{\xi'}(\xi)=\xi'\xi$,
{and the group dilations are}
$\{\delta_{\lambda}\}_{\lambda>0}: \mathbb{H}^{n}\rightarrow\mathbb{H}^{n}$, $\delta_{\lambda}(\xi)=(\lambda z,\lambda^{2}t)$.
Define the homogeneous norm $|\xi|=(|z|^{4}+t^{2})^{\frac{1}{4}}$, and the distance
$d(\xi,\xi')=|(\xi')^{-1}\xi|$.
It holds that $|\delta_{\lambda}(\xi)|=\lambda|\xi|$ and
{$d(\delta_\lambda\xi,\delta_\lambda\xi')=\lambda d(\xi,\xi')$.}
As usual, the homogeneous dimension of $\mathbb{H}^{n}$ is $Q=2n+2$. Denote by $B(\xi_0,r)$ the ball of radius $r$ centered at $\xi_0$ with respect to the Heisenberg distance $d$.
The canonical left-invariant vector fields on $\mathbb{H}^{n}$ are
\begin{equation*}
 X_{j}=\frac{\partial }{\partial x_{j}}+2y_{j}\frac{\partial }{\partial t},\quad X_{n+j}=\frac{\partial}{\partial y_{j}}-2x_{j}\frac{\partial}{\partial t},\quad j=1, \ldots, n.
\end{equation*}
The horizontal gradient is defined by
\begin{equation*}
 \nabla_{\mathbb{H}}=(X_{1}, \ldots, X_{n},X_{n+1}, \ldots, X_{2n}),
\end{equation*}
and the Kohn Laplacian (or sub-Laplacian) operator is
\begin{equation*}
 \Delta_{\mathbb{H}}=\mathop{\sum}\limits_{j=1}^{n}\big(X_{j}^{2}+X_{n+j}^{2}\big).
\end{equation*}
Let $Q^{\ast}=\frac{2Q}{Q-2}$, define the standard Folland--Stein--Sobolev space $S^{1,2}(\mathbb{H}^{n})=\big\{u\in L^{Q^{\ast}}(\mathbb{H}^{n}):\nabla_{\mathbb{H}}u\in L^{2}(\mathbb{H}^{n})\big\}$,
with the inner product $\langle u,v\rangle_{S^{1,2}(\mathbb{H}^{n})}=\int_{\mathbb{H}^{n}}\nabla_{\mathbb{H}}u\cdot\nabla_{\mathbb{H}}v{d}\xi$,
and the corresponding norm $\|u\|_{S^{1,2}(\mathbb{H}^{n})}=\big(\int_{\mathbb{H}^{n}}|\nabla_{\mathbb{H}}u|^{2}{d}\xi\big)^{\frac{1}{2}}$.
Throughout the paper, all function spaces are understood over the real numbers unless explicitly stated otherwise.

{The Heisenberg-group analogue of the HLS inequality was established} by Folland and Stein \cite{FS74} as well as Frank and Lieb \cite{FL12}.
\begin{proposition}\label{pro:HLS}
 Suppose that $Q\geq 4$, $0<\mu< Q$ and $t,r>1$ with $\frac{1}{t}+\frac{\mu}{Q}+\frac{1}{r}=2$, $f\in L^{t}(\mathbb{H}^{n})$ and $h\in L^{r}(\mathbb{H}^{n})$. Then there exists a constant $\widetilde{C}(Q,\mu,t,r)>0$ independent of $f$ and $h$ such that
 \begin{equation}\label{eq:HLSH} \int_{\mathbb{H}^{n}}\int_{\mathbb{H}^{n}}\frac{f(\xi)h(\eta)}{|\eta^{-1}\xi|^{\mu}}
d\xi{d}\eta\leq \widetilde{C}(Q,\mu,t,r)\|f\|_{L^t(\mathbb{H}^{n})}\|h\|_{L^r(\mathbb{H}^{n})}.
 \end{equation}
 If $t=r=\frac{2Q}{2Q-\mu}$, then
\begin{equation*}
 \widetilde{C}(Q,\mu,t,r)=C(Q,\mu)=\bigg(\frac{\pi^{n+1}}{2^{n-1}n!}\bigg)^{\frac{\mu}{Q}}
 \frac{n!\Gamma(\frac{Q-\mu}{2})}{\Gamma^2(\frac{2Q-\mu}{4})},
\end{equation*}
and there is equality in \eqref{eq:HLSH} if and only if $f\equiv (const.)h$ and $h(\xi)=c\lambda^{\frac{2Q-\mu}{2}}V(\delta_\lambda(\tau_{{\xi_0}^{-1}}(\xi)))$ for some $c\in \mathbb{R}$, $\lambda>0$, $\xi_{0}\in\mathbb{H}^{n}$, and $V(\xi)=\frac{1}{[(1+|z|^{2})^{2}+t^{2}]^{\frac{2Q-\mu}{4}}}$.
\end{proposition}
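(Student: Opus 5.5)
The plan is to go through the Cayley transform and reduce Proposition \ref{pro:HLS} to a sharp inequality on the CR sphere, following Frank and Lieb \cite{FL12}. The non-sharp part, inequality \eqref{eq:HLSH} for general $t,r$ with $\frac1t+\frac\mu Q+\frac1r=2$, is the standard Folland--Stein statement \cite{FS74}. The kernel $|\eta^{-1}\xi|^{-\mu}$ is homogeneous of degree $-\mu$ with respect to the dilations $\delta_\lambda$, and the Haar measure scales like $\lambda^{Q}$. Convolution with this kernel is therefore of weak type $L^{t}\to L^{Q/(Q-\mu)+\text{shift}}$. The Marcinkiewicz interpolation theorem combined with Hölder's inequality then gives \eqref{eq:HLSH}, exactly as in the Euclidean proof on spaces of homogeneous type.

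For the sharp diagonal case $t=r=\frac{2Q}{2Q-\mu}$, I would proceed in four steps.

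\textbf{Step 1 (transfer to the sphere).} Use the Cayley transform $\mathcal C:\mathbb H^n\to S^{2n+1}\setminus\{\text{pole}\}$. Its Jacobian is a constant multiple of $[(1+|z|^2)^2+t^2]^{-Q/2}$, and the distance transforms as
\[
|\eta^{-1}\xi|^{4}\propto |1-\zeta\cdot\bar\omega|^{2}\,J(\xi)^{-1/Q}J(\eta)^{-1/Q}
\]
for $\zeta=\mathcal C(\xi)$ and $\omega=\mathcal C(\eta)$. After setting $F=J^{-(2Q-\mu)/(2Q)}f\circ\mathcal C^{-1}$, the inequality becomes
\[
\iint_{S^{2n+1}} \frac{F(\zeta)\bar H(\omega)}{|1-\zeta\cdot\bar\omega|^{\mu/2}}\le C\|F\|_{p}\|H\|_{p},\qquad p=\frac{2Q}{2Q-\mu}.
\]
Under this change of variables, the constant function corresponds to $V$.

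\textbf{Step 2 (existence of an optimizer).} Show that the sharp constant is attained, using concentration-compactness or Lieb's method for the subcritical-exponent case. Note that $p<2$ and the kernel is integrable, so the only possible loss of compactness comes from the noncompact $SU(n+1,1)$ action. This is handled by renormalizing the center of mass, which makes it a Frank--Lieb "zero center of mass" argument.

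\textbf{Step 3 (identifying optimizers).} This is the main obstacle, because rearrangement is unavailable on $\mathbb H^n$. Following Frank--Lieb, first use the symmetric structure to reduce to $f=h\ge0$. Then choose the conformal (CR) automorphism so that the optimizer $F$ has $\int \zeta_j |F|^p=0$. Perform the second variation with test functions $\zeta_j F$ and $\bar\zeta_j F$, and sum over $j$. The Funk--Hecke formula on complex harmonics $\mathcal H_{j,k}$ gives the eigenvalues of the kernel $|1-\zeta\cdot\bar\omega|^{-\mu/2}$ as explicit Gamma ratios. The resulting inequality forces $F$ to be constant, and checking that this inequality has a favorable sign is the delicate computation. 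Pulling back yields $h=cV$ up to $\delta_\lambda$ and left translations, and $f\equiv(\mathrm{const.})h$ follows from equality in the Cauchy--Schwarz inequality for the positive definite kernel.

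\textbf{Step 4 (the constant).} Finally, $C(Q,\mu)$ is computed by evaluating the double integral at $F\equiv1$. The inner integral is a constant given by a Beta function, namely $\frac{2\pi^{n+1}\Gamma(\frac{Q-\mu}{2})}{\Gamma^2(\frac{2Q-\mu}{4})}\cdot\frac{n!}{\dots}$. Combining this with $|S^{2n+1}|=\frac{2\pi^{n+1}}{n!}$ and undoing the Cayley normalization factors $2^{\dots}$ gives the stated formula.
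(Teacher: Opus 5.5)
Your outline follows the Frank--Lieb strategy (Cayley transform, existence of an optimizer modulo the $SU(n+1,1)$ action, zero center of mass, second variation along $\zeta_jF,\bar\zeta_jF$, Funk--Hecke eigenvalue comparison, evaluation at $F\equiv1$), together with the Folland--Stein fractional-integration bound for general $t,r$; the paper does not reprove Proposition~\ref{pro:HLS} but cites exactly \cite{FS74} and \cite{FL12} for it. Two small corrections if you write it out: the Cayley identity gives $|\eta^{-1}\xi|^{4}\propto|1-\zeta\cdot\bar\omega|^{2}J(\xi)^{-2/Q}J(\eta)^{-2/Q}$ (your formula for $F$ is nonetheless the correct one), and in Step~2 it is the strict superadditivity coming from $2/p>1$ that reduces loss of compactness to single-point concentration, which the center-of-mass normalization then excludes.
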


Similarly, by \eqref{eq:HLSH}, $Q^{\ast}_{\mu}=\frac{2Q-\mu}{Q-2}$ is called the upper critical exponent on the Heisenberg group.
{At the upper critical exponent, the sharp inequality, the classification of extremals, and the associated Euler--Lagrange equation take the following form \cite{YZ251}; see also \cite{ZWZLX25}.}
\begin{lemma}
 Let $Q\geq 4$, $0<\mu<Q$. Then for any $u\in S^{1,2}(\mathbb{H}^{n})\setminus\{0\}$, the inequality
 \begin{equation}\label{eq:HLS'}
 S_{HL}(Q,\mu) \left(\int_{\mathbb{H}^{n}}\int_{\mathbb{H}^{n}}\frac{|u(\xi)|^{Q^{\ast}_{\mu}}|u(\eta)|
^{Q^{\ast}_{\mu}}}{|\eta^{-1}\xi|^{\mu}}{d}\xi{d}\eta\right)^{\frac{1}{Q^{\ast}_{\mu}}}\leq \|\nabla_{\mathbb{H}}u\|^2_{L^2(\mathbb{H}^{n})}
 \end{equation}
holds with the sharp constant
\begin{equation}\label{relation}
 S_{HL}(Q,\mu)=S(Q)C(Q,\mu)^{-\frac{1}{Q^{\ast}_{\mu}}}.
\end{equation}
Equality in \eqref{eq:HLS'} holds
if and only if \[u(\xi)=c\mathfrak{g}_{\lambda,\xi_0}U(\xi)\] for some $c\in\mathbb{R}\setminus\{0\}$, $\lambda>0$ and $\xi_0\in \mathbb{H}^n$, where $\mathfrak{g}_{\lambda,\xi_0}U(\xi)$ is defined by \eqref{gU}. Moreover,
 \begin{equation*}
 U_\mu(\xi)=\mathcal{A}U(\xi)\quad \mathrm{with}\quad \mathcal{A}=S(Q)^{\frac{(Q-\mu)(2-Q)}{4(Q+2-\mu)}}C(Q,\mu)^{\frac{2-Q}{2(Q+2-\mu)}}
\end{equation*}
is the unique (up to translations and scalings, i.e., $\mathfrak{g}_{\lambda,\xi}U_\mu$ for all $\lambda>0$ and $\xi\in\mathbb{H}^{n}$)
 positive solution of the Euler--Lagrange equation
\begin{equation}\label{limit3}
 -\Delta_{\mathbb{H}} u=\left(\int_{\mathbb{H}^{n}}\frac{|u(\eta)|
^{Q^{\ast}_{\mu}}}{|\eta^{-1}\xi|^{\mu}}{d}\eta\right)|u|^{Q^{\ast}_{\mu}-2}u,~~\xi\in\mathbb{H}^{n}.
\end{equation}
\end{lemma}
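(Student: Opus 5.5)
The plan is to derive the lemma from two sharp inequalities: the Heisenberg HLS inequality of Proposition \ref{pro:HLS} and the Folland--Stein--Sobolev inequality \eqref{folland-stein}. Let $u\in S^{1,2}(\mathbb{H}^n)\setminus\{0\}$. Apply \eqref{eq:HLSH} with $f=h=|u|^{Q^\ast_\mu}$ and $t=r=\frac{2Q}{2Q-\mu}$. Since $Q^\ast_\mu\cdot\frac{2Q}{2Q-\mu}=\frac{2Q}{Q-2}=Q^\ast$, this gives
\begin{equation*}
\int_{\mathbb{H}^n}\int_{\mathbb{H}^n}\frac{|u(\xi)|^{Q^\ast_\mu}|u(\eta)|^{Q^\ast_\mu}}{|\eta^{-1}\xi|^\mu}\,d\xi\, d\eta\le C(Q,\mu)\,\|u\|_{L^{Q^\ast}}^{2Q^\ast_\mu}.
\end{equation*}
Taking the $1/Q^\ast_\mu$ power gives $C(Q,\mu)^{1/Q^\ast_\mu}\|u\|_{L^{Q^\ast}}^2$. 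Combining this with \eqref{folland-stein} yields \eqref{eq:HLS'} with constant $S(Q)C(Q,\mu)^{-1/Q^\ast_\mu}$.

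\textbf{Sharpness and equality cases.} Equality in \eqref{eq:HLS'} forces equality in both inequalities used. Equality in \eqref{folland-stein} holds exactly for $u=c\,\mathfrak g_{\lambda,\xi_0}U$, by Jerison--Lee. For such $u$ we have
\begin{equation*}
|u|^{Q^\ast_\mu}=|c|^{Q^\ast_\mu}\lambda^{\frac{2Q-\mu}{2}}(2n)^{nQ^\ast_\mu}\,V(\delta_\lambda(\tau_{\xi_0^{-1}}\xi)),
\end{equation*}
because $U^{Q^\ast_\mu}$ is a constant multiple of $V$ (the exponents match: $\frac{Q-2}{4}Q^\ast_\mu=\frac{2Q-\mu}{4}$, and $\frac{Q-2}{2}Q^\ast_\mu=\frac{2Q-\mu}{2}$). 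So $|u|^{Q^\ast_\mu}$ is an HLS extremal and equality also holds in the HLS step. This shows the constant is attained, hence sharp, and that the bubbles are exactly the equality cases. The converse (equality implies bubble) is immediate, since equality in the chain forces equality in \eqref{folland-stein}.

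\textbf{Euler--Lagrange equation and uniqueness.} For the statement about \eqref{limit3}, first verify that $\mathcal A U$ solves it. The bubble $U$ solves $-\Delta_{\mathbb H}U=U^{Q^\ast-1}$. Using the HLS extremality of $V$, the convolution $\int |\eta^{-1}\xi|^{-\mu}U(\eta)^{Q^\ast_\mu}d\eta$ equals a constant $\kappa$ times $U(\xi)^{Q^\ast-Q^\ast_\mu}$. The cleanest route is to differentiate the equality case: extremality of $U$ for \eqref{eq:HLS'} gives the Euler--Lagrange equation with a Lagrange multiplier. Alternatively, one can use Cayley-transform conformal invariance, under which the Riesz potential of $V$ is proportional to $U^{Q^\ast-Q^\ast_\mu}$. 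The scaling $u\mapsto \mathcal A u$ then absorbs $\kappa$, since the nonlinearity is homogeneous of degree $2Q^\ast_\mu-1$. Solving $\mathcal A^{2Q^\ast_\mu-2}\kappa=1$, with $\kappa$ expressed through $S(Q)$ and $C(Q,\mu)$ via the energy identities $\|\nabla_{\mathbb H}U\|_2^2=\|U\|_{Q^\ast}^{Q^\ast}=S(Q)^{Q/2}$, produces the stated formula for $\mathcal A$.

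Uniqueness of positive solutions of \eqref{limit3} up to $\mathfrak g_{\lambda,\xi}$ is the genuinely hard part, and I expect it to be the main obstacle. I would attack it by a moving-sphere / CR-inversion argument on the equivalent integral system $u=\Gamma\ast(wu^{Q^\ast_\mu-1})$, $w=|\cdot|^{-\mu}\ast u^{Q^\ast_\mu}$, or else rely on the Jerison--Lee-type classification as in \cite{YZ251}. Since the lemma is quoted from \cite{YZ251}, the plan would be to cite that classification rather than reprove it.
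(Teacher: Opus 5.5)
Your argument is correct and takes essentially the paper's approach: the paper quotes this lemma from \cite{YZ251} without a separate proof, and the factorization you use (sharp HLS with $f=h=|u|^{Q^\ast_\mu}$, then the Folland--Stein inequality, with equality forced in both and attained by the Jerison--Lee bubbles because $U^{Q^\ast_\mu}$ is a constant multiple of $V$) is exactly the chain the paper itself invokes in Remark~\ref{rmk2}. Your normalization $\mathcal A^{2Q^\ast_\mu-2}\kappa=1$ with $\kappa=S(Q)^{\frac{Q-\mu}{2}}C(Q,\mu)$ reproduces the stated $\mathcal A$, and, like the paper, you rightly defer the classification of positive solutions of \eqref{limit3} to \cite{YZ251}.
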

For equation \eqref{limit3}, Yang and Zhang \cite{YZ251} established
the following nondegeneracy property of bubbles.
\begin{lemma}
 Let $Q\geq 4$, $0<\mu< Q$.
 {If $v \in S^{1,2}(\mathbb{H}^{n})$ solves the linearized equation}
 \begin{equation*}
 -\Delta_{\mathbb{H}} v=Q^{\ast}_{\mu}\left(\int_{\mathbb{H}^{n}}\frac{|U(\eta)|^{Q^{\ast}_{\mu}-1}v(\eta)}
{|\eta^{-1}\xi|^{\mu}}{d}\eta\right)|U|^{Q^{\ast}_{\mu}-2}U+(Q^{\ast}_{\mu}-1)
\left(\int_{\mathbb{H}^{n}}\frac{|U(\eta)|^{Q^{\ast}_{\mu}}}{|\eta^{-1}\xi|^{\mu}}{d}\eta\right)
|U|^{Q^{\ast}_{\mu}-2}v,
\end{equation*}
{then $v$ is a linear combination} of the functions $\{Z^{a}\}_{a=1}^{2n +2}$, where $Z^a$ are defined by
 \begin{equation*}
 Z^{a}=\frac{\partial \mathfrak{g}_{1, \eta}U }{\partial\eta^{(a)}}\bigg|_{ \eta=0},~~ a=1,\ldots, 2n+1,
 \end{equation*}
\begin{equation*}
\begin{aligned}
Z^{2n+2}= \frac{\partial \mathfrak{g}_{r, 0}U }{\partial r}\bigg|_{r=1}=
\frac{Q-2}{2}U-(Q-2)\frac{|z|^2(1+|z|^2)+t^2}{(1+|z|^2)^2+t^2} U,
\end{aligned}
\end{equation*}
and $\eta^{(a)}$ denotes the $a$-th coordinate of $\eta \in \mathbb{H}^n$.
\end{lemma}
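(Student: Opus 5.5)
The plan is to move the linearized equation to the CR sphere $\mathbb{S}^{2n+1}\subset\mathbb{C}^{n+1}$ with the Cayley transform $\mathcal{C}$. There both the sub-Laplacian and the HLS kernel become operators with constant coefficients that are diagonal on the complex spherical harmonics $\mathcal H_{j,k}^{n+1}$, so the linearized equation decouples into a scalar identity on each $\mathcal H_{j,k}^{n+1}$.

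\emph{Step 1: transfer to the sphere.} Put $v(\xi)=J_{\mathcal C}(\xi)^{\frac{Q-2}{2Q}}\,\varphi(\mathcal C(\xi))$, where $J_{\mathcal C}$ is the Jacobian of $\mathcal C$. Three facts are used.
\begin{itemize}
\item The Jerison--Lee bubble satisfies $U=c_n J_{\mathcal C}^{\frac{Q-2}{2Q}}$, so $U$ corresponds to a constant on the sphere.
\item $-\Delta_{\mathbb H}$ corresponds to the conformal sub-Laplacian $\mathcal L=-\Delta_b+\frac{n^2}{4}$, up to a normalization.
\item The kernel transforms as $|\eta^{-1}\xi|^{-\mu}=c\,J_{\mathcal C}(\xi)^{-\frac{\mu}{2Q}}J_{\mathcal C}(\eta)^{-\frac{\mu}{2Q}}\,|1-\zeta\cdot\bar\omega|^{-\mu/2}$ with $\zeta=\mathcal C(\xi)$, $\omega=\mathcal C(\eta)$.
\end{itemize}
Because $Q^{\ast}_{\mu}=\frac{2Q-\mu}{Q-2}$, all powers of $J_{\mathcal C}$ cancel. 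The linearized equation then takes the form
\[
\mathcal L\varphi = A\,\mathcal K\varphi + B\,\varphi\qquad\text{on }\mathbb S^{2n+1},
\]
where $\mathcal K\varphi(\zeta)=\int_{\mathbb S^{2n+1}}|1-\zeta\cdot\bar\omega|^{-\mu/2}\varphi(\omega)\,d\omega$. The constants are $A=Q^{\ast}_{\mu}\,c^{2Q^{\ast}_{\mu}-2}$ and $B=(Q^{\ast}_{\mu}-1)\,c^{2Q^{\ast}_{\mu}-2}\mathcal K1$; they come from the constant value $c$ of the transformed bubble. Since $v\in S^{1,2}(\mathbb H^n)$, we get $\varphi\in S^{1,2}(\mathbb S^{2n+1})$, so $\varphi$ can be expanded in $L^2$ as $\varphi=\sum_{j,k}\varphi_{j,k}$ with $\varphi_{j,k}\in\mathcal H_{j,k}^{n+1}$.

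\emph{Step 2: diagonalization.} Both operators act by scalars on each $\mathcal H_{j,k}^{n+1}$.
\begin{itemize}
\item $\mathcal L|_{\mathcal H_{j,k}}=\lambda_{j,k}:=(j+\frac n2)(k+\frac n2)$, up to a fixed normalization.
\item By the Funk--Hecke formula for the kernel $|1-\zeta\cdot\bar\omega|^{-\mu/2}$ (the Frank--Lieb computation), $\mathcal K|_{\mathcal H_{j,k}}=E_{j,k}$, where
\[
\frac{E_{j,k}}{E_{0,0}}=\frac{\Gamma(\frac{2Q-\mu}{4})^2\,\Gamma(j+\frac{\mu}{4})\,\Gamma(k+\frac{\mu}{4})}{\Gamma(\frac{\mu}{4})^2\,\Gamma(j+\frac{2Q-\mu}{4})\,\Gamma(k+\frac{2Q-\mu}{4})}.
\]
\end{itemize}
The equation therefore reduces to the scalar identity
\[
\bigl(\lambda_{j,k}-A E_{j,k}-B\bigr)\varphi_{j,k}=0\quad\text{for every }(j,k).
\]
The Euler--Lagrange equation for the constant bubble fixes the normalization $\lambda_{0,0}=c^{2Q^{\ast}_{\mu}-2}E_{0,0}$. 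With this, the identity on each $\mathcal H_{j,k}$ can be written as
\[
\frac{\lambda_{j,k}}{\lambda_{0,0}}=Q^{\ast}_{\mu}\frac{E_{j,k}}{E_{0,0}}+Q^{\ast}_{\mu}-1 .
\]

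\emph{Step 3: which modes solve it.} Consider the function $F(j,k)=\frac{\lambda_{j,k}}{\lambda_{0,0}}-Q^{\ast}_{\mu}\frac{E_{j,k}}{E_{0,0}}-(Q^{\ast}_{\mu}-1)$.
\begin{itemize}
\item At $(0,0)$, $F=1-(2Q^{\ast}_{\mu}-1)<0$. This shows that $U$ itself is not a kernel element.
\item At $(1,0)$ one checks $F=0$: here $\lambda_{1,0}/\lambda_{0,0}=\frac{n+2}{n}$ and $E_{1,0}/E_{0,0}=\frac{\mu}{2Q-\mu}$, and a short algebraic computation using $Q=2n+2$ confirms the identity. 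By symmetry $F(0,1)=0$ as well.
\item The map $(j,k)\mapsto\lambda_{j,k}$ is strictly increasing in each index. The map $(j,k)\mapsto E_{j,k}$ is strictly decreasing in each index, because the ratio $\Gamma(j+\frac\mu4)/\Gamma(j+\frac{2Q-\mu}{4})$ decreases when $\mu<Q$. Hence $F$ is strictly increasing in each index, so $F\neq0$ whenever $j+k\ge2$.
\end{itemize}
Consequently $\varphi\in\mathcal H_{1,0}\oplus\mathcal H_{0,1}$. For real-valued $\varphi$ this space has real dimension $2(n+1)=2n+2$, spanned by $\mathrm{Re}\,\zeta_i$ and $\mathrm{Im}\,\zeta_i$.

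\emph{Step 4: identification with the $Z^a$.} Each $Z^a$ solves the linearized equation, since it is the derivative of a family of solutions of \eqref{limit3}. The $Z^a$ are linearly independent, so they span the $(2n+2)$-dimensional kernel found in Step 3. This proves the lemma.

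The main obstacle is Step 1 together with the eigenvalues in Step 2. One must track the conformal weights precisely, to make sure the Jacobians cancel exactly at $Q^{\ast}_{\mu}$, and use the exact Funk--Hecke eigenvalues so that $F(1,0)=0$ holds exactly. The monotonicity argument in Step 3 is routine once these formulas are in hand.
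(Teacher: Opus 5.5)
Your proposal is correct and follows essentially the route behind this lemma. The paper cites Yang--Zhang for it, but its Section~\ref{sec3} carries out the same Cayley-transform/Funk--Hecke diagonalization, and your mode condition $\lambda_{j,k}/\lambda_{0,0}=Q^{\ast}_{\mu}E_{j,k}/E_{0,0}+Q^{\ast}_{\mu}-1$ is exactly $\frac{Q^{\ast}_{\mu}-1}{\nu_{j,k}}+\frac{Q^{\ast}_{\mu}}{\tau_{j,k}}=1$ (recall $E_{j,k}/E_{0,0}=\nu_{j,k}/\tau_{j,k}$), which holds only on $\mathcal H_{1,0}^{n+1}\oplus\mathcal H_{0,1}^{n+1}$.

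There are two slips to fix. First, the HLS kernel pulls back with factors $J_{\mathcal C}(\xi)^{+\frac{\mu}{2Q}}J_{\mathcal C}(\eta)^{+\frac{\mu}{2Q}}$, not $-\frac{\mu}{2Q}$; only with the plus sign do the Jacobians cancel and $d\eta$ become $d\sigma$. Second, your normalization $\lambda_{0,0}=c^{2Q^{\ast}_{\mu}-2}E_{0,0}$ and Step~4 both require the linearization point to be the solution $U_\mu=\mathcal A U$ of \eqref{limit3}, not the Jerison--Lee $U$, which solves the local equation. State explicitly that you read the statement's $U$ as $U_\mu$.
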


We now collect several preliminary results, which are largely adapted from \cite[Section 5]{FL12} and \cite[Section 2]{YZ251}.
We regard the sphere $\mathbb{S}^{2n+1}$ as a submanifold of $\mathbb{C}^{n+1}$ equipped with coordinates
$(\zeta_1,\ldots,\zeta_{n+1})$ subject to the constraint $\sum\limits_{j=1}^{n+1}|\zeta_j|^2=1$.
 The Cayley transform $\mathcal{C}:\mathbb{H}^n \rightarrow \mathbb{S}^{2n+1}\setminus \{(0,0,\ldots,0,-1)\}$ is defined by
\begin{equation*}
 \mathcal{C}(z,t)=\left(\frac{2z}{1+|z|^2+it},\frac{1-|z|^2-it}{1+|z|^2+it}\right),
\end{equation*}
and its inverse map $\mathcal{C}^{-1}: \mathbb{S}^{2n+1}\setminus \{(0,0,\ldots,0,-1)\}\rightarrow \mathbb{H}^n$ is given by
\begin{equation*}
 \mathcal{C}^{-1}(\zeta)=\left(\frac{\zeta_1}{1+\zeta_{n+1}},\ldots,\frac{\zeta_n}{1+\zeta_{n+1}},\mathrm{Im}\frac{1-\zeta_{n+1}}{1+\zeta_{n+1}}\right).
\end{equation*}
{Its Jacobian determinant is}
\begin{equation*}
 \mathcal{J}_\mathcal{C}(z,t)=\frac{2^{2n+1}}{[(1+|z|^2)^2+t^2]^{n+1}}=2^{Q-1}(2n)^{-2n-2}U^{Q^{\ast}},
\end{equation*}
which yields the change-of-variable formula \[\int_{\mathbb{S}^{2n+1}}F(\zeta)d \sigma(\zeta)=\int_{\mathbb{H}^n}F(\mathcal{C}\xi) \mathcal{J}_\mathcal{C}(\xi) d \xi\]
valid for all $F\in L^1(\mathbb{S}^{2n+1})$.

For $\zeta=\mathcal{C}(\xi)=\mathcal{C}(z,t)$ and $\zeta'=\mathcal{C}(\eta)=\mathcal{C}(z',t')$, the identity
\begin{equation*}
 |1-\zeta\cdot\overline{\zeta'}|=2\big[(1+|z|^2)^2+t^2\big]^{-\frac{1}{2}}|\xi^{-1}\eta|^2\big[(1+|z'|^2)^2+(t')^2\big]^{-\frac{1}{2}}
\end{equation*}
holds in view of \cite[Appendix A]{FL12}.
 Given any function $f:\mathbb{H}^n\rightarrow \mathbb{R}$, we define its pushforward $\mathcal{C}_*f:\mathbb{S}^{2n+1}\setminus \{(0,0,\ldots,0,-1)\}\rightarrow \mathbb{R}$ via
\begin{equation*}
 \mathcal{C}_*f(\zeta)=[\mathcal{J}_\mathcal{C}(\xi)]^{-\frac{1}{Q^{\ast}}}f(\mathcal{C}^{-1}\zeta).
\end{equation*}
Likewise, for every $F:\mathbb{S}^{2n+1}\setminus \{(0,0,\ldots,0,-1)\}\rightarrow \mathbb{R}$, the pullback
$\mathcal{C}^*F:\mathbb{H}^n\rightarrow \mathbb{R}$ is defined by
\begin{equation*}
 \mathcal{C}^*F(\xi)=[\mathcal{J}_\mathcal{C}(\xi)]^{\frac{1}{Q^{\ast}}}F(\mathcal{C}\xi).
\end{equation*}

{When the complexified spectral decomposition is used below, both $\mathcal{C}_*$ and $\mathcal{C}^*$ are understood by complex-linear extension.}

{We next recall the complex spherical harmonics used to diagonalize the integral operators on}
$\mathbb{S}^{2n+1}$; see \cite[Section 5.2]{FL12}. {The complexified space $L^2(\mathbb{S}^{2n+1};\mathbb C)$
 admits the orthogonal direct-sum decomposition}
\begin{equation*}
 {L^2(\mathbb{S}^{2n+1};\mathbb C)=\bigoplus\limits_{i,j\geq0}\mathcal{H}_{i,j}^{n+1}.}
\end{equation*}
where $\mathcal{H}_{i,j}^{n+1}$ stands for the space formed by restricting to $\mathbb{S}^{2n+1}$ all harmonic polynomials $p(z,\bar{z})$ on $\mathbb{C}^{n+1}$ that are
homogeneous of degree $i$ in $z$ and homogeneous of degree $j$ in $\bar{z}$, and
\begin{equation*}
 \mathrm{dim} \mathcal{H}_{i,j}^{n+1}=\frac{(i+j+n)(i+n-1)!(j+n-1)!}{i!j!n!(n-1)!}.
\end{equation*}
For later use on the real function space, we set
\[
 (\mathcal H_{i,i}^{n+1})_{\mathbb R}
 :=\mathcal H_{i,i}^{n+1}\cap L^2(\mathbb S^{2n+1};\mathbb R),
 \qquad
 (\mathcal H_{i,j}^{n+1}\oplus\mathcal H_{j,i}^{n+1})_{\mathbb R}
 :=(\mathcal H_{i,j}^{n+1}\oplus\mathcal H_{j,i}^{n+1})
 \cap L^2(\mathbb S^{2n+1};\mathbb R)
\]
for $i\ne j$.
For each $\mu\in (0,Q)$, and integers $i,j\geq0$, we define
\begin{equation*}
 E_{i,j}(\mu)=E_{j,i}(\mu)=\frac{2\pi^{n+1}\Gamma(n+1-\frac{\mu}{2})}{\Gamma^2(\frac{\mu}{4})}
 \frac{\Gamma(i+\frac{\mu}{4})\Gamma(j+\frac{\mu}{4})}{\Gamma(i+n+1-\frac{\mu}{4})\Gamma(j+n+1-\frac{\mu}{4})}.
\end{equation*}
In particular,
\begin{equation*}
 E_{0,0}(\mu)=2\pi^{n+1}\frac{\Gamma(n+1-\frac{\mu}{2})}{\Gamma^2(n+1-\frac{\mu}{4})},\quad E_{1,0}(\mu)=\frac{\mu}{4n+4-\mu}E_{0,0}(\mu),
\end{equation*}
\begin{equation*}
 E_{2,0}(\mu)=\frac{\mu(\mu+4)}{(4n+4-\mu)(4n+8-\mu)}E_{0,0}(\mu)>
 E_{1,1}(\mu)=\frac{\mu^2}{(4n+4-\mu)^2}E_{0,0}(\mu).
\end{equation*}
Making use of the Funk-Hecke formula; see \cite[Corollary 5.3]{FL12},
\begin{equation*}
 \int_{\mathbb{S}^{2n+1}}\frac{1}{|1-\zeta\cdot \overline{\zeta'}|^{\frac{\mu}{2}}}Y(\zeta')d \sigma(\zeta')=E_{i,j}(\mu) Y(\zeta),\qquad \mathrm{for~all}~Y \in \mathcal{H}_{i,j}^{n+1},
\end{equation*}
Yang and Zhang proved in \cite[Lemma 2.3]{YZ251} that for every $Y \in \mathcal{H}_{i,j}^{n+1}$,
\begin{equation}\label{FHA}
 \int_{\mathbb{S}^{2n+1}}\int_{\mathbb{S}^{2n+1}}\frac{1}{|1-\zeta\cdot \overline{\zeta'}|^{\frac{Q-2}{2}}}\frac{1}{|1-\zeta'\cdot \overline{\zeta''}|^{\frac{\mu}{2}}}Y(\zeta'')d \sigma( \zeta'')d \sigma(\zeta')=E_{i,j}(Q-2)E_{i,j}(\mu) Y(\zeta),
\end{equation}
\begin{equation*}
 \int_{\mathbb{S}^{2n+1}}\int_{\mathbb{S}^{2n+1}}\frac{1}{|1-\zeta\cdot \overline{\zeta'}|^{\frac{Q-2}{2}}}\frac{1}{|1-\zeta'\cdot \overline{\zeta''}|^{\frac{\mu}{2}}}Y(\zeta')d \sigma(\zeta'')d \sigma(\zeta')=E_{i,j}(Q-2)E_{0,0}(\mu) Y(\zeta).
\end{equation*}

\section{Main results}\label{sec main}

{We now state the main results. The functional part identifies the two compactness thresholds, proves attainment, and determines the sharp reverse comparison. The final theorem concerns the residual quotient for the Euler--Lagrange equation and is logically independent of attainment of the functional quotient.}

{\bf{In the functional setting.}}
Building upon the remainder estimate of Deng et al. \cite{DTYZ23}, we recently established in \cite[Theorem 1.1]{CW26} the quantitative stability of \eqref{eq:HLS'} for $0<\mu<Q$ with $\mu\leq4$. More precisely, there exist constants $H_{UB}>H_{NS}>0$ such that for every $u\in S^{1,2}(\mathbb{H}^{n})$,
\begin{multline}\label{e1.15}
 H_{UB}~ \mathrm{dist}(u,\mathfrak{M})^2\geq \int_{\mathbb{H}^{n}}|\nabla_{\mathbb{H}} u|^{2}{d}\xi-S_{HL}(Q,\mu) \left(\int_{\mathbb{H}^{n}}\int_{\mathbb{H}^{n}}\frac{|u(\xi)|^{Q^{\ast}_{\mu}}|u(\eta)|
^{Q^{\ast}_{\mu}}}{|\eta^{-1}\xi|^{\mu}}{d}\xi{d}\eta\right)^{\frac{1}{Q^{\ast}_{\mu}}}\\
\geq H_{NS} ~ \mathrm{dist}(u,\mathfrak{M})^2.
\end{multline}
{Combining the sharp HLS inequality with the local stability estimate of Tang, Zhang and Zhang \cite{TZZ24} extends the lower estimate in \eqref{e1.15} to every $0<\mu<Q$ and gives the non-strict comparison $H_{NS}\geq H_{BE}$. The purpose of the present paper is to identify the strict thresholds that restore compactness, prove attainment in explicit parameter ranges, and upgrade this comparison to $H_{NS}>H_{BE}$.}
Define
\begin{equation}\label{defH}
 H_{NS}:=\inf\limits_{u\in S^{1,2}(\mathbb{H}^{n})\setminus \mathfrak{M}}\mathcal{L}(u),
\end{equation}
where
\begin{equation*}
 \mathcal{L}(u)=\frac{\int_{\mathbb{H}^{n}}|\nabla_{\mathbb{H}} u|^{2}{d}\xi-S_{HL}(Q,\mu) \Big(\int_{\mathbb{H}^{n}}\int_{\mathbb{H}^{n}}\frac{|u(\xi)|^{Q^{\ast}_{\mu}}|u(\eta)|
^{Q^{\ast}_{\mu}}}{|\eta^{-1}\xi|^{\mu}}{d}\xi{d}\eta\Big)^{\frac{1}{Q^{\ast}_{\mu}}}}{\mathrm{dist}(u,\mathfrak{M})^2}.
\end{equation*}

We first identify the two strict thresholds that control the possible loss of compactness. The first is generated by a local perturbation of one bubble and the second by two asymptotically separated bubbles.
\begin{theorem}\label{main thm0}
\begingroup
Let $Q=2n+2\ge4$ and $0<\mu<Q$. Then
\begin{equation}\label{upper2}
H_{NS}<H_{NS}^{2\text{-peak}}
:=2-2^{\frac{2n}{4n+4-\mu}}.
\end{equation}
Moreover, let $\Gamma_{n,\mu}$ be the explicit fourth-order coefficient defined in \eqref{Gamma-def}. If
\begin{equation}\label{Gamma-condition}
\Gamma_{n,\mu}<0,
\end{equation}
then
\begin{equation}\label{upper1}
H_{NS}<H_{NS}^{\mathrm{spec}}
:=\frac{(4n+8-2\mu)(\mu+4)}{2(n+4)(4n+8-\mu)}.
\end{equation}
In particular, $\Gamma_{n,2}<0$ for every $n\ge1$; see Lemma~\ref{Gamma-mu-two}.
\endgroup
\end{theorem}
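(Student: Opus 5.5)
We prove the two strict inequalities separately, each with an explicit family of test functions for which $\mathcal L$ falls below the threshold. Throughout we use the Cayley transform. Under it $\|\nabla_{\mathbb H}u\|_2^2$ becomes the CR conformal Laplacian quadratic form on $\mathbb S^{2n+1}$, and the HLS double integral becomes $\iint |1-\zeta\cdot\overline{\zeta'}|^{-\mu/2}|F(\zeta)|^{Q^*_\mu}|F(\zeta')|^{Q^*_\mu}$. Both forms are diagonal on $\mathcal H^{n+1}_{i,j}$. The form $\|\nabla_{\mathbb H}\cdot\|^2$ has multiplier proportional to $E_{i,j}(Q-2)^{-1}$, and by \eqref{FHA} the HLS kernel acts with multiplier $E_{i,j}(\mu)$.

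\textbf{Two-bubble bound.} Take $u_R=U+\mathfrak g_{R,0}U$ with $R\to\infty$, or equivalently two bubbles at distance $R\to\infty$. Set $a=\|U\|^2$ and let $\varepsilon_R=\int\nabla_{\mathbb H}U\cdot\nabla_{\mathbb H}\mathfrak g_{R,0}U\to0$. The numerator is $2a+2\varepsilon_R-S_{HL}(\mathcal D(u_R))^{1/Q^*_\mu}$. Expand the double integral $\mathcal D$. The diagonal terms contribute $2\mathcal D(U)$. The cross term $\iint |U(\xi)|^{Q^*_\mu}|\mathfrak g U(\eta)|^{Q^*_\mu}|\eta^{-1}\xi|^{-\mu}$ is $o(\varepsilon_R)$ for the relevant range, or must be tracked explicitly. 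The mixed terms $2Q^*_\mu\iint |U|^{Q^*_\mu}|U|^{Q^*_\mu-2}U\,\mathfrak gU$ produce, via the Euler--Lagrange equation \eqref{limit3}, a multiple of $\varepsilon_R$. This gives
\[
\mathcal D(u_R)^{1/Q^*_\mu}=2^{1/Q^*_\mu}\mathcal D(U)^{1/Q^*_\mu}\bigl(1+c\,\varepsilon_R+o(\varepsilon_R)\bigr).
\]
We use $S_{HL}\mathcal D(U)^{1/Q^*_\mu}=a$ and note that $2^{1/Q^*_\mu}=2^{(Q-2)/(2Q-\mu)}=2^{2n/(4n+4-\mu)}$, where we write $Q^*_\mu$ for the exponent defined earlier. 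Next, $\mathrm{dist}(u_R,\mathfrak M)^2\to a$, with the leading correction $-\varepsilon_R$-type attained by projecting onto one bubble. Then
\[
\mathcal L(u_R)=\frac{(2-2^{2n/(4n+4-\mu)})a+\kappa\varepsilon_R+o(\varepsilon_R)}{a-\kappa'\varepsilon_R+o(\varepsilon_R)},
\]
and we need the net first-order coefficient to be negative. This follows as in K\"onig and \cite{TZZ24}: the gain from the interaction in the HLS term, with factor $2^{1/Q^*_\mu}\cdot\frac{2Q^*_\mu}{2Q^*_\mu}$, exceeds the loss, because $2^{1/Q^*_\mu}>1$. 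We will verify this sign carefully. It is routine given the interaction asymptotics $\varepsilon_R\sim cR^{-(Q-2)/2}$, and the HLS cross term is of higher order because $Q^*_\mu>1$.

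\textbf{Spectral bound.} Write the second variation at $U$ restricted to $T_U\mathfrak M^\perp$. Pulling back to the sphere, the tangent space is $\mathcal H_{0,0}\oplus\mathcal H_{1,0}\oplus\mathcal H_{0,1}$. For $\varphi\in\mathcal H_{i,j}$ the Hessian quotient is $1-\lambda_{i,j}/\Lambda_{i,j}$, where $\Lambda_{i,j}$ is the gradient multiplier and $\lambda_{i,j}$ collects the two HLS contributions: $(Q^*_\mu-1)E_{0,0}(\mu)$ from the local term and $Q^*_\mu E_{i,j}(\mu)$ from the nonlocal term. These are normalized using the two identities after \eqref{FHA}. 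Minimizing over $(i,j)\notin\{(0,0),(1,0),(0,1)\}$ gives the minimum at $(1,1)$, which beats $(2,0)$ since $E_{2,0}>E_{1,1}$ enters with a minus sign. Computing this yields the value $H^{\mathrm{spec}}_{NS}$ in \eqref{upper1}; this gives $H_{NS}\le H^{\mathrm{spec}}_{NS}$ by testing $u=U+\epsilon\varphi$ with $\varphi\in(\mathcal H_{1,1})_{\mathbb R}$ and letting $\epsilon\to0$. For strictness, take
\[
u_\epsilon=U+\epsilon\varphi+\epsilon^2\psi,
\]
where $\psi\perp T_U\mathfrak M$ is an optimized normal correction. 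The cubic term $\iint$ involving $\varphi^3$-type moments vanishes by the $U(1)$ invariance of $\mathcal H_{1,1}$ combined with the chosen real $\varphi$ (or, if nonzero, choosing $\pm\varphi$ already gives strictness). Expand the numerator to order $\epsilon^4$, including the HLS double integral to fourth order and the fourth-order expansion of the power $1/Q^*_\mu$. Expand $\mathrm{dist}(u_\epsilon,\mathfrak M)^2$ to order $\epsilon^4$, accounting for the shift of the optimal bubble parameters, which is quadratic in $\epsilon$. Then optimize the quadratic form in $\psi$: the optimal $\psi$ solves a linear equation whose right-hand side is the projection of $\varphi^2$-type terms, namely $|U|^{Q^*_\mu-3}U\varphi^2$ and the nonlocal $\varphi\cdot(|x|^{-\mu}*U^{Q^*_\mu-1}\varphi)$ pieces, onto $\mathcal H_{0,0}\oplus\mathcal H_{2,2}\oplus\mathcal H_{1,1}\oplus\mathcal H_{2,0}\oplus\cdots$. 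It is solved explicitly mode by mode with the multipliers $E_{i,j}$. Substituting back, we obtain
\[
\mathcal L(u_\epsilon)=H^{\mathrm{spec}}_{NS}+\Gamma_{n,\mu}\epsilon^2+o(\epsilon^2),
\]
which is the defining formula \eqref{Gamma-def}. So $\Gamma_{n,\mu}<0$ gives \eqref{upper1}. The claim for $\mu=2$ is Lemma~\ref{Gamma-mu-two}, an algebraic check.

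\textbf{Main obstacle.} The main obstacle is the fourth-order bookkeeping. We must compute the integrals of products of $\mathcal H_{1,1}$ harmonics (projections of $\varphi^2$ onto $\mathcal H_{k,k}$, $k=0,1,2$) together with the nonlocal multipliers, while ensuring that the distance expansion and the correction $\psi$ are consistent. First we will reduce to the single explicit choice $\varphi=|\zeta_1|^2-|\zeta_2|^2$, which keeps every projection computable in closed form.
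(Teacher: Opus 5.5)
\textbf{The spectral part tests the wrong normal mode, and this breaks the argument for \eqref{upper1}.} On a normal block the quadratic quotient is $\kappa_{i,j}=1-\big((q-1)+q\vartheta_{i,j}\big)/\nu_{i,j}$, where $q=Q^{\ast}_{\mu}$ and $\vartheta_{i,j}=E_{i,j}(\mu)/E_{0,0}(\mu)$. Two facts hold:
\begin{itemize}
\item $\nu_{1,1}-\nu_{2,0}=16/(Q-2)^2>0$;
\item $\vartheta_{2,0}>\vartheta_{1,1}$ for $\mu<Q$, because the sign is that of $8(Q-\mu)$.
\end{itemize}
Both facts push $\kappa_{2,0}$ below $\kappa_{1,1}$. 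The larger multiplier $E_{2,0}$ enters with a minus sign, so it makes $(2,0)$ the minimizer, not $(1,1)$. Indeed $H^{\mathrm{spec}}_{NS}=\kappa_{2,0}$; for example, at $n=1$, $\mu=2$ one has $\kappa_{2,0}=12/25<\kappa_{1,1}=20/27$.

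Consequences for your test functions:
\begin{itemize}
\item For $u_\epsilon=U+\epsilon\varphi+\epsilon^2\psi$ with $\varphi\in(\mathcal H_{1,1}^{n+1})_{\mathbb R}$, you get $\mathcal L(u_\epsilon)\to\kappa_{1,1}>H^{\mathrm{spec}}_{NS}$ whatever $\psi$ is.
\item So neither $H_{NS}\le H^{\mathrm{spec}}_{NS}$ nor strictness follows.
\item The fourth-order coefficient you would compute is not the $\Gamma_{n,\mu}$ of \eqref{Gamma-def} that appears in the hypothesis.
\end{itemize}

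The expansion must instead be done along $(\mathcal H_{2,0}^{n+1}\oplus\mathcal H_{0,2}^{n+1})_{\mathbb R}$, for instance $\phi=\operatorname{Re}(\zeta_{n+1}^2)$. There the cubic moment vanishes because $\phi=P+\overline P$ with $P\in\mathcal H^{n+1}_{2,0}$, and the terms of $\phi^3$ carry $U(1)$-weights $\pm2,\pm6$. On $\mathcal H_{1,1}$, whose elements have weight $0$, your $U(1)$ argument gives nothing. Apart from its constant part, $\phi^2$ has components only in the $(4,0)\oplus(0,4)$, $(1,1)$ and $(2,2)$ blocks, so that is where the correction $\psi$ lives. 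Completing the square with $d_\alpha=\nu_\alpha(\kappa_\alpha-\kappa_{\mathrm{spec}})>0$ then gives
\[
\mathcal L=\kappa_{\mathrm{spec}}+\frac{\Gamma_{n,\mu}}{\nu_{2,0}M_2}\,\epsilon^2+o(\epsilon^2).
\]

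\textbf{The two-bubble part has the right mechanism but two false claims.} The first-order coefficient does come out as $2(1-2^{1/q})<0$. However, two of your claims must be replaced.
\begin{itemize}
\item The pure HLS cross term $\iint U^{q}(\xi)(\mathfrak g_{R,0}U)^{q}(\eta)|\eta^{-1}\xi|^{-\mu}\,d\xi\, d\eta$ is of order $R^{-\mu/2}$. This dominates $\varepsilon_R\sim R^{-(Q-2)/2}$ when $\mu<Q-2$, so it is not higher order. It is harmless only because it is nonnegative. You therefore need a one-sided lower bound for the double integral, not an asymptotic expansion. The paper reduces to one ball by Heisenberg inversion and uses $(B+B_\lambda)^q\ge B^q+qB^{q-1}B_\lambda$ there.
\item To bound $\mathcal L$ from above you need a lower bound $\mathrm{dist}(u_R,\mathfrak M)^2\ge a+o(\varepsilon_R)$. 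Projecting onto one bubble gives only an upper bound. Moreover, in that bound the $\varepsilon_R$ terms cancel, giving $a-\varepsilon_R^2/a$, so there is no first-order correction of size $\varepsilon_R$.
\end{itemize}
With an undetermined $-\kappa'\varepsilon_R$ in the denominator, the sign of the net coefficient is not settled. The fact $\mathrm{dist}(u_R,\mathfrak M)^2=a+o(\varepsilon_R)$ (K\"{o}nig's argument, cf.\ Proposition~\ref{p4.1}(ii)) is therefore a needed ingredient, not a routine check.
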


Whenever \eqref{Gamma-condition} holds, the two strict inequalities in Theorem~\ref{main thm0} provide the compactness barriers used below: the spectral bound rules out minimizing sequences approaching the bubble manifold, whereas the two-bubble bound rules out dichotomy.

\begin{theorem}\label{main thm1}
\begingroup
Let \(Q\ge4\) and \(0<\mu<Q\). Assume either
\[
\mu\le4\quad\text{and}\quad \Gamma_{n,\mu}<0,
\]
or
\begin{equation}\label{ach}
\mu>4,\quad n\ge5,\quad
\mu \ge 4n+4-\frac{2n}{\log_2\frac{2n+6}{n+4}}.
\end{equation}
Then \(H_{NS}\) is attained: there exists
\(u_0\in S^{1,2}(\mathbb{H}^n)\setminus\mathfrak M\) such that
\[
\mathcal L(u_0)=H_{NS}.
\]
\endgroup
\end{theorem}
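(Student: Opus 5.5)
We use the standard concentration--compactness route: take a minimizing sequence, normalize it, and show that the only obstructions to compactness are the two thresholds identified in Theorem~\ref{main thm0}.

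\textbf{Normalization.} Let $(u_k)\subset S^{1,2}(\mathbb H^n)\setminus\mathfrak M$ satisfy $\mathcal L(u_k)\to H_{NS}$. The quotient $\mathcal L$ is invariant under multiplication by constants and under the group actions $\mathfrak g_{\lambda,\xi_0}$. Hence we may assume that $\|u_k\|_{S^{1,2}}=1$ and that the infimum defining $\mathrm{dist}(u_k,\mathfrak M)$ is attained at $c_kU$ with $c_k\ge0$. Then $u_k-c_kU$ is orthogonal in $S^{1,2}$ to $U$ and to the tangent vectors $Z^1,\dots,Z^{2n+2}$.

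\textbf{Case 1: $\mathrm{dist}(u_k,\mathfrak M)\to0$.} We write $u_k=c_k(U+\varepsilon_k\varphi_k)$ with $\varphi_k$ normalized and orthogonal to the tangent space. The second-order expansion, diagonalized on the spaces $\mathcal H^{n+1}_{i,j}$ via \eqref{FHA}, gives $\liminf\mathcal L(u_k)\ge H_{NS}^{\mathrm{spec}}$. This contradicts \eqref{upper1}, which is available whenever $\Gamma_{n,\mu}<0$.

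In the large-$\mu$ range \eqref{ach}, $\Gamma$ is not assumed negative, so we argue by comparison instead. Since $\mathcal L\ge$ the local Folland--Stein deficit quotient, we get $H_{NS}\le H_{NS}^{\mathrm{spec}}$ only if we can exhibit a competitor. We plan to evaluate $\mathcal L$ on the local near-optimal direction for $H_{BE}$ and show that the resulting value lies strictly below $H_{NS}^{\mathrm{spec}}$ and below $H_{NS}^{2\text{-peak}}$. Concretely, the condition $\mu\ge 4n+4-2n/\log_2\frac{2n+6}{n+4}$ is exactly $2^{2n/(4n+4-\mu)}\ge\frac{2n+6}{n+4}$, that is, $H^{2\text{-peak}}_{NS}\le\frac{2}{n+4}=H_{BE}^{\mathrm{spec}}$. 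Consequently the two-bubble bound \eqref{upper2} alone gives $H_{NS}<H_{BE}^{\mathrm{spec}}$. We then aim to show $H_{BE}^{\mathrm{spec}}\le H_{NS}^{\mathrm{spec}}$ for $\mu>4$, $n\ge5$ by direct algebra on the explicit formula. Together these give $H_{NS}<H_{NS}^{\mathrm{spec}}$ without any fourth-order expansion, which excludes Case 1 in this range as well.

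\textbf{Case 2: $\mathrm{dist}(u_k,\mathfrak M)\ge\delta>0$.} Pass to a weak limit $u_k\rightharpoonup u$ and write $u_k=u+w_k$. We apply Brezis--Lieb splitting to the gradient norm and a nonlocal Brezis--Lieb lemma to the HLS double integral; the cross terms are controlled by Proposition~\ref{pro:HLS}. We also use the concentration-compactness profile decomposition for the Heisenberg group, with defects measured modulo dilations and translations.

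If $w_k\to0$ strongly, then $u$ is a minimizer provided $u\notin\mathfrak M$. That is guaranteed because $\mathrm{dist}(u,\mathfrak M)\ge\delta$ by continuity of the distance under strong convergence.

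Otherwise mass splits between at least two profiles. Using the concavity of $s\mapsto s^{1/Q^*_\mu}$, we show that the limiting quotient is at least the two-bubble value $H_{NS}^{2\text{-peak}}$ given in \eqref{upper2}; the worst case is two equal separated bubbles. This contradicts \eqref{upper2}. A loss of all mass by vanishing is excluded because the normalized deficit and the distance scale compatibly.

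\textbf{Main obstacle.} We expect the dichotomy estimate to be the hardest step, namely the lower bound for the quotient when mass splits. It requires a sharp inequality of the form
\[
\frac{1-S_{HL}\bigl(\sum a_i^{Q^*_\mu}\bigr)^{1/Q^*_\mu}\cdots}{\mathrm{dist}^2}\ \ge\ 2-2^{\frac{2n}{4n+4-\mu}}
\]
over all splittings. Here the distance to $\mathfrak M$ of a sum of separated profiles must be estimated from below by the norm of all but the largest piece. We would first reduce to a finite-dimensional optimization over the masses $a_i$ and then verify the two-equal-bubble extremality by convexity, in the manner of \cite{K25,TZZ24}.
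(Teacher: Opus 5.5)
\textbf{Gap in the dichotomy step (Case 2).} You propose to show that every sequence splitting into separated profiles has limiting quotient at least $H^{2\text{-peak}}_{NS}=2-2^{1/Q^\ast_\mu}$. Your route is to reduce to an optimization over the masses $a_i$, in which two equal bubbles are extremal. No such universal inequality holds, because non-bubble profiles carry their own deficits.

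Here is a counterexample. Put $q=Q^\ast_\mu$ and $S=S_{HL}(Q,\mu)$. Let $\phi\perp T_B\mathfrak M$ be the lowest normal eigenfunction of Proposition~\ref{sg}, normalized by $\|\nabla_{\mathbb H}\phi\|_{L^2}^2=S$. Let $\kappa_\epsilon=\mathcal L(B+\epsilon\phi)$, so that $\kappa_\epsilon\to H^{\mathrm{spec}}_{NS}$. Take $u_k=(B+\epsilon\phi)+B_{\lambda_k,\mathbf 0}$ with $\lambda_k\to0$. Both pieces have the same value of $\mathbf m$, so Lemma~\ref{l5.1} gives $\mathrm{dist}(u_k,\mathfrak M)^2\to S(1+\epsilon^2)$. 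A short expansion then gives
\[
\lim_{k\to\infty}\mathcal L(u_k)=\bigl(2-2^{1/q}\bigr)+\epsilon^2\bigl(2^{\frac1q-1}(1+\kappa_\epsilon)-1\bigr)+o(\epsilon^2).
\]
The bracket is negative as soon as $H^{\mathrm{spec}}_{NS}<2^{1-1/q}-1$. For instance, $n=1$, $\mu=2$ lies in the range of the theorem and gives $q=3$ and $H^{\mathrm{spec}}_{NS}=12/25<2^{2/3}-1$. So splitting configurations can lie strictly below the two-bubble value, and a bound depending only on masses cannot work. Separately, estimating $\mathrm{dist}$ from below, as your ``main obstacle'' paragraph suggests, points the wrong way for a lower bound on $\mathcal L$.

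\textbf{What is missing is the use of minimality.} For a minimizing sequence $u_k=u_0+v_k$, the paper writes $H_{NS}$ as a limit of mediants $(a+c)/(b+e)$. Here $a/b$ is the quotient of one piece, so $a\ge H_{NS}\,b$ because $H_{NS}$ is the infimum, and this forces $H_{NS}\ge c/e$. The ratio $c/e$ is then bounded using the monotonicity of $\eta\mapsto\bigl((1+\eta^{2q})^{1/q}-1\bigr)/\eta^{2}$.

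This closes only if the $\mathbf m$-term in the distance formula of Lemma~\ref{l5.1} can be attributed to the piece with the larger $\|\cdot\|_*$. After normalizing that piece to $\|\cdot\|_*=1$, the other piece has $\|\cdot\|_*\le1$, which yields $c/e\ge 2-2^{1/q}$ and contradicts \eqref{upper2}. This attribution is exactly what Lemma~\ref{l5.2} provides: along a minimizing sequence, $\mathbf m(u_0)=\mathbf m(v_k)+o_k(1)$. Otherwise, rescaling the piece with smaller $\mathbf m$ up to equal $\mathbf m$ produces, via \cite[Lemmas 2.3--2.4]{K25}, a competitor with strictly smaller limiting quotient. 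Your plan needs this balancing argument, or an equivalent use of minimality, in place of the mass optimization.

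\textbf{A smaller point (Case 1, $\mu>4$).} Your detour through $H^{\mathrm{spec}}_{NS}$ requires $\liminf\mathcal L(u_k)\ge H^{\mathrm{spec}}_{NS}$ for $\mu>4$. There $Q^\ast_\mu<2$, and Lemma~\ref{local}, whose proof uses $Q^\ast_\mu\ge2$, does not apply. The comparison you already wrote down suffices instead. The nonlocal quotient dominates the Folland--Stein quotient, whose near-manifold $\liminf$ is at least $2/(n+4)$ by \cite{LZ15}. This directly contradicts $H_{NS}<H^{2\text{-peak}}_{NS}\le 2/(n+4)$.
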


The preceding theorems concern the smallest constant in the lower stability estimate. For completeness, we also determine the opposite, universal upper comparison between the deficit and the squared distance.
\begin{theorem}\label{main thm2}
Let $Q\geq 4$ and $0<\mu<Q$. {Then every $u\in S^{1,2}(\mathbb{H}^{n})$ satisfies}
\begin{equation}\label{ub1}
 \int_{\mathbb{H}^{n}}|\nabla_{\mathbb{H}} u|^{2}{d}\xi-S_{HL}(Q,\mu) \left(\int_{\mathbb{H}^{n}}\int_{\mathbb{H}^{n}}\frac{|u(\xi)|^{Q^{\ast}_{\mu}}|u(\eta)|
^{Q^{\ast}_{\mu}}}{|\eta^{-1}\xi|^{\mu}}{d}\xi{d}\eta\right)^{\frac{1}{Q^{\ast}_{\mu}}}\leq 1\cdot \mathrm{dist}(u,\mathfrak{M})^2.
\end{equation}
Furthermore, the constant $1$ is sharp, and equality holds if and only if $u\in \mathfrak{M}$.
\end{theorem}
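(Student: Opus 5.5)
Write $D(u)$ for the left-hand side of \eqref{ub1} and $N(u)$ for the double integral. Since $S_{HL}(Q,\mu)N(u)^{1/Q^{\ast}_{\mu}}\ge0$, we have $D(u)\le\|u\|^2$. The plan is to show that $\|u\|^2-\mathrm{dist}(u,\mathfrak M)^2$ is at most the nonlocal term $S_{HL}(Q,\mu)N(u)^{1/Q^{\ast}_{\mu}}$, with equality only on $\mathfrak M$.

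\textbf{Step 1: reduce the distance to a projection.} For fixed $(\lambda,\xi)$, the infimum over $c\in\mathbb R$ of $\|u-c\,\mathfrak g_{\lambda,\xi}U\|^2$ is an orthogonal projection onto a line. Hence
\[
\mathrm{dist}(u,\mathfrak M)^2=\|u\|^2-\sup_{\lambda,\xi}\frac{\langle u,\mathfrak g_{\lambda,\xi}U\rangle^2}{\|U\|^2}.
\]
So \eqref{ub1} is equivalent to
\[
\sup_{\lambda,\xi}\frac{\langle u,\mathfrak g_{\lambda,\xi}U\rangle^2}{\|U\|^2}\le S_{HL}(Q,\mu)N(u)^{1/Q^{\ast}_{\mu}}.
\]

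\textbf{Step 2: express the inner product through the Euler--Lagrange equation.} Use the normalized bubble $U_\mu$, which solves \eqref{limit3}. Integrating by parts,
\[
\langle u,\mathfrak g U_\mu\rangle=\int_{\mathbb H^n}\int_{\mathbb H^n}\frac{W(\eta)^{Q^{\ast}_{\mu}}\,W(\xi)^{Q^{\ast}_{\mu}-1}u(\xi)}{|\eta^{-1}\xi|^{\mu}}\,d\xi\, d\eta ,\qquad W=\mathfrak g U_\mu .
\]
Apply Proposition~\ref{pro:HLS} as a positive-definite bilinear form, i.e.\ Cauchy--Schwarz for the HLS kernel. Then apply H\"older in $L^{2Q/(2Q-\mu)}$ to split $W^{Q^{\ast}_{\mu}-1}|u|$ between $W^{Q^{\ast}_{\mu}}$ and $|u|^{Q^{\ast}_{\mu}}$. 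This gives
\[
|\langle u,W\rangle|\le N(W)^{\frac{2Q^{\ast}_{\mu}-1}{2Q^{\ast}_{\mu}}}\,N(u)^{\frac{1}{2Q^{\ast}_{\mu}}}.
\]
Using $\|W\|^2=N(W)$ and $S_{HL}(Q,\mu)=N(W)^{1-1/Q^{\ast}_{\mu}}$, both coming from the equation and the sharpness of \eqref{eq:HLS'}, squaring yields exactly
\[
\frac{\langle u,W\rangle^2}{\|W\|^2}\le S_{HL}(Q,\mu)\,N(u)^{1/Q^{\ast}_{\mu}} .
\]
This proves \eqref{ub1}.

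\textbf{Main obstacle: the exponent bookkeeping in Step 2.} The delicate point is matching the exponents so that the constant comes out exactly $1$. If the H\"older split above misbehaves, a simpler route is available. Use the one-variable bound $W^{Q^{\ast}_{\mu}-1}|u|\le\frac{Q^{\ast}_{\mu}-1}{Q^{\ast}_{\mu}}W^{Q^{\ast}_{\mu}}+\frac{1}{Q^{\ast}_{\mu}}|u|^{Q^{\ast}_{\mu}}$ after rescaling $u$, combined with HLS Cauchy--Schwarz $\iint f g\,|\eta^{-1}\xi|^{-\mu}\le N_f^{1/2}N_g^{1/2}$. Then optimize over the scaling.

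\textbf{Sharpness and the equality case.} For sharpness, take $u=U+\varepsilon\varphi$ with $\varphi$ orthogonal to the tangent space of $\mathfrak M$ and lying in a high spherical-harmonic mode $\mathcal H_{i,j}^{n+1}$ with $i+j\to\infty$. Expanding to second order, the ratio $D(u)/\mathrm{dist}^2$ tends to $1$ minus the quadratic HLS form divided by the Dirichlet form, where the quadratic HLS form is given by \eqref{FHA}. This multiplier ratio behaves like $E_{i,j}(\mu)/E_{i,j}(Q-2)$ up to constants, and it tends to $0$ by the Gamma asymptotics as $i+j\to\infty$. Hence the supremum of the quotient equals $1$. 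For the equality case, equality forces equality in the HLS Cauchy--Schwarz step, which forces $|u|$ to be proportional to $W$. Alternatively, $D(u)=\mathrm{dist}^2$ together with $D\le\|u\|^2-\text{(projection)}$ forces $N(u)$ to equal the projection, and the equality characterization in Proposition~\ref{pro:HLS} then gives $u\in\mathfrak M$. For $u\in\mathfrak M$ both sides vanish.
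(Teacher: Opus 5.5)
\textbf{Verdict.} Your route is the paper's, and your derivation of the inequality \eqref{ub1} is correct. The sharpness step, however, rests on a wrong multiplier that makes it fail for $Q-2\le\mu<Q$. The equality case also needs one more ingredient.

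\textbf{The inequality.} Step~1 is Lemma~\ref{l4.1}. Step~2 is exactly the derivation of \eqref{e4.7}: Cauchy--Schwarz for the HLS form, followed by H\"older on $\mathbb H^n\times\mathbb H^n$ against the positive measure $|\eta^{-1}\xi|^{-\mu}\,d\xi\,d\eta$. The positive definiteness needed for Cauchy--Schwarz comes from $E_{i,j}(\mu)>0$ after the Cayley transform, not from Proposition~\ref{pro:HLS} itself. Your exponent bookkeeping is correct.

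\textbf{The error in the sharpness step.} The strategy of using high spherical modes is the paper's, but the ratio you compute is wrong.
On $\mathcal C^*\mathcal H_{i,j}^{n+1}$ the Dirichlet form has multiplier proportional to $1/E_{i,j}(Q-2)=\nu_{i,j}/E_{0,0}(Q-2)$. The form $\iint U_\mu^{Q^\ast_\mu-1}(\xi)\varphi(\xi)\,U_\mu^{Q^\ast_\mu-1}(\eta)\varphi(\eta)\,|\eta^{-1}\xi|^{-\mu}\,d\xi\,d\eta$ has multiplier proportional to $E_{i,j}(\mu)$. Their ratio is therefore proportional to the \emph{product} $E_{i,j}(Q-2)E_{i,j}(\mu)$, i.e.\ to $1/\tau_{i,j}$; this is exactly what \eqref{FHA} encodes.
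The quotient $E_{i,j}(\mu)/E_{i,j}(Q-2)$ you wrote is proportional to $\vartheta_{i,j}\nu_{i,j}$. For fixed $j$ and $i\to\infty$ it behaves like $i^{(\mu-Q+2)/2}$, so it does not tend to $0$ when $Q-2\le\mu<Q$.

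You also drop half of the second variation of $N(u)^{1/Q^\ast_\mu}$. The missing piece is $(Q^\ast_\mu-1)\iint U_\mu^{Q^\ast_\mu}(\eta)U_\mu^{Q^\ast_\mu-2}(\xi)\varphi^2(\xi)|\eta^{-1}\xi|^{-\mu}\,d\xi\,d\eta$, whose ratio to the Dirichlet form is $(Q^\ast_\mu-1)/\nu_{i,j}$.

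With both corrections, and with $\varphi$ taken in a real block such as $(\mathcal H_{m,m}^{n+1})_{\mathbb R}$, the quotient tends to $1-\bigl(\frac{Q^\ast_\mu-1}{\nu_{i,j}}+\frac{Q^\ast_\mu}{\tau_{i,j}}\bigr)$. This tends to $1$ for every $0<\mu<Q$, as in the paper. For this lower bound, the trivial estimate $\mathrm{dist}(u,\mathfrak M)^2\le\varepsilon^2\|\varphi\|^2$ together with $D(u)\ge0$ already suffices.

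\textbf{The equality case.} Your first route is the right one, but it needs the supremum in Step~1 to be attained; this is the attainment part of Lemma~\ref{l4.1}.
If equality holds and $u\neq0$, the supremum equals $S_{HL}(Q,\mu)N(u)^{1/Q^\ast_\mu}>0$. A maximizing sequence of bubbles leaving every compact set of parameters converges weakly to $0$ and kills the pairing, so the supremum must be attained.
Then $|u|=cW$ follows from equality in the H\"older step on the product space. The constant sign of $u$ follows from equality in $|\iint\cdots u|\le\iint\cdots|u|$.
Your alternative appeal to the equality case of Proposition~\ref{pro:HLS} does not work, because the sharp HLS inequality is never used in your chain of estimates.
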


{The following remarks clarify the parameter ranges and the relation with the local problem.}
\begin{remark}\label{rmk2}
{\rm
{The strict spectral upper bound in Theorem~\ref{main thm0} is conditional only on the explicit sign $\Gamma_{n,\mu}<0$. Lemma~\ref{Gamma-mu-two} provides the concrete example $\Gamma_{n,2}<0$ for every $n\ge1$. Since the coefficients in \eqref{Gamma-def} depend continuously on $\mu$ and $d_\alpha>0$, for each fixed $n$ the same sign persists on an open interval containing $\mu=2$. Intersecting this interval with $(0,4]$ gives a nonempty parameter range to which the first alternative in Theorem~\ref{main thm1} applies.}

$\bullet$ In Theorem~\ref{main thm1}, using Lemma~\ref{num}, we obtain
\begin{equation*}
 4<4n+4-\frac{2n}{\log_2 \frac{2n+6}{n+4}}<Q=2n+2\Longleftrightarrow n\geq5.
\end{equation*}
Using \eqref{eq:HLSH} and \eqref{relation}, we arrive at
\begin{equation*}
 S_{HL}(Q,\mu) \left(\int_{\mathbb{H}^{n}}\int_{\mathbb{H}^{n}}\frac{|u(\xi)|^{Q^{\ast}_{\mu}}|u(\eta)|
^{Q^{\ast}_{\mu}}}{|\eta^{-1}\xi|^{\mu}}{d}\xi{d}\eta\right)^{\frac{1}{Q^{\ast}_{\mu}}}\leq
S_{HL}(Q,\mu) C(Q,\mu)^{\frac{1}{Q^{\ast}_{\mu}}} \|u\|_{L^{Q^{\ast}}(\mathbb{H}^n)}^2=S(Q)\|u\|_{L^{Q^{\ast}}(\mathbb{H}^n)}^2.
\end{equation*}
Thus, relying on Theorem~\ref{main thm1}, Proposition \ref{pro:HLS} and $u_0\in S^{1,2}(\mathbb{H}^{n})\setminus \mathfrak{M}$, we further derive the strict inequality
\begin{equation}\label{dl}
 H_{NS}>H_{BE}.
\end{equation}

$\bullet$ In \cite{LZ15}, Liu and Zhang derived a sharp universal upper bound associated with the classical Folland--Stein--Sobolev inequality. More precisely, they proved that
for every $u\in S^{1,2}(\mathbb{H}^{n})$,
\begin{equation}\label{ub2}
 \int_{\mathbb{H}^{n}}|\nabla_{\mathbb{H}} u|^{2}{d}\xi-S(Q)\left(\int_{\mathbb{H}^{n}}|u|^{Q^{\ast}}d\xi\right)^{\frac{2}{Q^{\ast}}}\leq 1\cdot \mathrm{dist}(u,\mathfrak{M})^2.
\end{equation}
Moreover, the constant $1$ on the right-hand side is optimal and equality holds in \eqref{ub2} if and only if $u\in \mathfrak{M}$. Comparing estimates \eqref{ub1} and \eqref{ub2}, we observe that the classical Folland--Stein--Sobolev inequality and its nonlocal counterpart admit identical upper bounds, while their lower bounds differ as demonstrated in \eqref{dl}.
}
\end{remark}

{\bf In the critical-point setting.} In \cite[Theorems 1.9-1.10]{ZXW25} and \cite[Corollary 1.5]{CW26},
quantitative stability results for critical points of equation \eqref{limit3} were established. More precisely, the corresponding conclusions read as follows:
Let either $Q\geq4$, $\mu\in(0,Q)$ with $m=1$ or $Q=4$, $\mu\in(0,4)$ with $m\geq2$.
For any nonnegative function $u\in S^{1,2}(\mathbb{H}^{n})$ satisfying
\begin{equation}\label{addbound}
 \Big(m-\frac{1}{2}\Big)S_{HL}(Q,\mu)^{\frac{2Q-\mu}{Q+2-\mu}}\leq \int_{\mathbb{H}^{n}}|\nabla_{\mathbb{H}} u|^2d\xi
 \leq \Big(m+\frac{1}{2}\Big)S_{HL}(Q,\mu)^{\frac{2Q-\mu}{Q+2-\mu}},
\end{equation}
{there exists a family of bubbles $\{\mathfrak{g}_{\lambda_i,\xi_i}U_\mu\}_{i=1}^{m}$ such that}
\begin{equation*}
\left\|u-\sum\limits_{i=1}^m\mathfrak{g}_{\lambda_i,\xi_i}U_\mu\right\|_{S^{1,2}(\mathbb{H}^{n})} \leq C \left\|-\Delta_{\mathbb{H}}u-\left(\int_{\mathbb{H}^{n}}\frac{|u(\eta)|^{Q^{\ast}_{\mu}}}{|\eta^{-1}\xi|^{\mu}}{d}\eta\right)|u|^{Q^{\ast}_{\mu}-2}u\right\|_{(S^{1,2}(\mathbb{H}^{n}))^{-1}}.
\end{equation*}
Analogous to the variational characterization in \eqref{defH}, we consider the following minimization problem in the critical-point setting:
\begin{equation*}
 {H_{CP}(m)=\inf\limits_{\substack{u\in S^{1,2}(\mathbb{H}^{n})\setminus \mathfrak{M}_m,\ u\ge0,\\ u~\mathrm{satisfies}~\eqref{addbound}}}\frac{\Big\|-\Delta_{\mathbb{H}}u-\Big(\int_{\mathbb{H}^{n}}\frac{|u(\eta)|^{Q^{\ast}_{\mu}}}{|\eta^{-1}\xi|^{\mu}}{d}\eta\Big)
 |u|^{Q^{\ast}_{\mu}-2}u\Big\|_{(S^{1,2}(\mathbb{H}^{n}))^{-1}}}{\mathrm{dist}(u,\mathfrak{M}_m)}.}
\end{equation*}
{Here
\[
\mathfrak{M}_m:=\left\{\sum_{i=1}^{m}\mathfrak{g}_{\lambda_i,\xi_i}U_\mu:\lambda_i>0,\ \xi_i\in\mathbb{H}^{n}\right\}
\]
denotes the set of $m$-bubble profiles.} {Motivated by the work of De Nitti and K\"{o}nig \cite{DK23}, we study the corresponding optimal residual quotient in the single-bubble case $m=1$ and derive its local spectral upper bound.}

\begin{theorem}\label{addthm}
\begingroup
Let $Q\ge4$ and $0<\mu<Q$. Then
\begin{equation}\label{CP-nonstrict}
H_{CP}(1)\le H_{NS}^{\mathrm{spec}}
=\frac{(4n+8-2\mu)(\mu+4)}{2(n+4)(4n+8-\mu)}.
\end{equation}
\endgroup
\end{theorem}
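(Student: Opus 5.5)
We test the residual quotient on $u_\varepsilon=U_\mu+\varepsilon\varphi$, with $\varphi$ a real eigenfunction from the lowest nontrivial normal mode, and let $\varepsilon\to0$. Here $\varphi=\mathcal C^*Y$ with $Y\in(\mathcal H_{1,1}^{n+1})_{\mathbb R}$, or $Y$ from $(\mathcal H_{2,0}^{n+1}\oplus\mathcal H_{0,2}^{n+1})_{\mathbb R}$, whichever realizes the spectral threshold. For small $\varepsilon$, $u_\varepsilon$ satisfies \eqref{addbound} with $m=1$. It lies outside $\mathfrak M_1$ because $\varphi$ is $S^{1,2}$-orthogonal to the tangent space $\mathrm{span}\{U_\mu,Z^1,\dots,Z^{2n+2}\}$ (distinct spherical harmonic degrees). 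Nonnegativity is the one delicate condition. I would handle it by truncating, replacing $u_\varepsilon$ by $u_\varepsilon^+$: since $\varphi/U_\mu=Y\circ\mathcal C$ is bounded, $u_\varepsilon>0$ for $|\varepsilon|$ small, so no truncation is actually needed.

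\emph{Step 1 (linearization).} Let $R(u)$ denote the residual. Expanding to first order,
\[
R(u_\varepsilon)=\varepsilon\,\mathcal L_0\varphi+O(\varepsilon^2),
\]
where $\mathcal L_0\varphi=-\Delta_{\mathbb H}\varphi-Q^*_\mu(|\cdot|^{-\mu}*U_\mu^{Q^*_\mu-1}\varphi)U_\mu^{Q^*_\mu-1}-(Q^*_\mu-1)(|\cdot|^{-\mu}*U_\mu^{Q^*_\mu})U_\mu^{Q^*_\mu-2}\varphi$. The $O(\varepsilon^2)$ bound in $(S^{1,2})^{-1}$ follows from HLS (Proposition~\ref{pro:HLS}) and Sobolev, since $\varphi\in L^{Q^*}$ with $|\varphi|\le CU_\mu$.

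\emph{Step 2 (diagonalization).} Pass to the sphere via the Cayley transform. On $\mathcal H_{i,j}^{n+1}$, $-\Delta_{\mathbb H}$ corresponds to the conformal sublaplacian, whose inverse has symbol proportional to $E_{i,j}(Q-2)$. The first nonlocal term acts through $E_{i,j}(\mu)$ by \eqref{FHA}, and the second through $E_{0,0}(\mu)$. Using the normalization $-\Delta_{\mathbb H}U_\mu=(|\cdot|^{-\mu}*U_\mu^{Q^*_\mu})U_\mu^{Q^*_\mu-1}$, one gets $\mathcal L_0\varphi=\big(1-\kappa_{i,j}\big)(-\Delta_{\mathbb H}\varphi)$ with
\[
\kappa_{i,j}=\frac{E_{i,j}(Q-2)}{E_{0,0}(Q-2)}\Big(Q^*_\mu\frac{E_{i,j}(\mu)}{E_{0,0}(\mu)}+Q^*_\mu-1\Big)\ \text{(up to normalization)},
\]
so that $\kappa_{0,0}=2Q^*_\mu-1$ and $\kappa_{1,0}=1$, the latter consistent with nondegeneracy. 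Hence
\[
\|\mathcal L_0\varphi\|_{(S^{1,2})^{-1}}=|1-\kappa_{i,j}|\,\|\varphi\|_{S^{1,2}}.
\]

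\emph{Step 3 (distance).} Since $\varphi$ is orthogonal to the tangent space at $U_\mu$, the standard argument gives $\mathrm{dist}(u_\varepsilon,\mathfrak M_1)=|\varepsilon|\|\varphi\|+o(\varepsilon)$: the minimizing bubble converges to $U_\mu$, and one expands. Therefore
\[
H_{CP}(1)\le|1-\kappa_{i,j}|.
\]
It remains to evaluate this for the optimal mode. Using the ratios $E_{1,1}/E_{0,0}$ and $E_{2,0}/E_{0,0}$ listed earlier, with $\mu$ replaced by $Q-2=2n$ in the first factor, an algebraic computation should give $1-\kappa=H^{\mathrm{spec}}_{NS}$ for the mode achieving the spectral threshold. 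This is the same quantity that appears in the local expansion of $\mathcal L$: the deficit Hessian is $(1-\kappa)\|\varphi\|^2$, and the residual linearization has norm $(1-\kappa)\|\varphi\|$.

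The main obstacle is bookkeeping. I expect it in Step 2, getting the exact eigenvalue normalization on the sphere so that the value coincides with \eqref{upper1}, and in confirming which mode, $(1,1)$ or $(2,0)$, gives the minimal $1-\kappa>0$. The quotient $\mathrm{dist}$ over $\mathfrak M_1$ also omits the scalar multiple $c$, so I would check that $\varphi$ is orthogonal to $U_\mu$ as well, which again follows from the distinct spherical harmonic degree.
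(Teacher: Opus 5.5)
Your proposal is correct and follows essentially the paper's route. You test with $U_\mu+\varepsilon\varphi$ for a lowest normal mode, linearize the residual, diagonalize via the Cayley transform and Funk--Hecke so that $\mathcal L_0\varphi=(1-\kappa)(-\Delta_{\mathbb H}\varphi)$, and use the orthogonal distance expansion. The paper does the same computation on the sphere with the normalized forms $\mathscr A,\mathcal B_\mu$ and transfers back by conformal covariance. The mode to take is $(\mathcal H_{2,0}^{n+1}\oplus\mathcal H_{0,2}^{n+1})_{\mathbb R}$ (the paper uses $\phi=\operatorname{Re}(\zeta_{n+1}^2)$): since $\nu_{2,0}<\nu_{1,1}$ and $\tau_{2,0}<\tau_{1,1}$, this mode gives exactly $1-\kappa_{2,0}=H^{\mathrm{spec}}_{NS}$, the identity checked in Lemma~\ref{local}, whereas the $(1,1)$ mode would only give the weaker bound $1-\kappa_{1,1}$.
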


\noindent{\bf Main novelties and technical points.}
The results above should be viewed in comparison with the Euclidean nonlocal stability result \cite{Z26} and the local Heisenberg result \cite{TZZ24}. The main novelty of the present paper lies in the quantitative interaction between the nonlocal HLS term and the CR geometry, which leads to new spectral and nonlinear features that are absent in either of these two settings. We emphasize the following points.

\vspace{.2cm}

{\bf (A) A coupled CR--HLS spectral structure.}
For the local Folland--Stein--Sobolev deficit, the second variation is governed by a single spectral form. In the present nonlocal problem, the linearization of the HLS interaction produces an additional bilinear form, so that the local stability analysis becomes a genuinely coupled spectral problem. After the Cayley transform, we simultaneously diagonalize the two forms on the complex spherical harmonics $\mathcal H_{i,j}^{n+1}$ and identify the common lowest normal mode. This yields the explicit threshold
\[
H_{NS}^{\mathrm{spec}}
=\frac{(4n+8-2\mu)(\mu+4)}
{2(n+4)(4n+8-\mu)},
\]
which reflects the interaction between the CR energy and the nonlocal HLS spectrum.

\vspace{.1cm}

{\bf (B) A nonlocal fourth-order strictness mechanism.}
\begingroup
The quadratic expansion gives only the local spectral threshold, and the cubic contribution vanishes on the lowest real normal eigenspace. We therefore introduce a second-order normal correction and expand the double-integral functional to fourth order, following the normal-form mechanism of \cite[Proposition~6.4]{Li26}. The nonlocal HLS term produces mode-dependent multipliers, so both the optimal correction and the fourth-order coefficient must be recomputed. The resulting finite-dimensional minimization yields the explicit quantity $\Gamma_{n,\mu}$; the sign condition $\Gamma_{n,\mu}<0$ gives a strict improvement over the spectral threshold. We verify this condition by an explicit algebraic calculation at $\mu=2$ for every $n\ge1$. A separate two-bubble interaction expansion provides the second strict upper bound needed for compactness.
\endgroup

\vspace{.1cm}

{\bf (C) Compactness and a strict local--nonlocal comparison.}
{The compactness argument uses two different barriers. In the first parameter regime of Theorem~\ref{main thm1}, the strict spectral bound excludes minimizing sequences approaching the extremal manifold; in the large-$\mu$ regime, the local Sobolev stability threshold $2/(n+4)$ plays the same role. In both regimes, the strict two-bubble bound rules out dichotomy and splitting into several nontrivial profiles. Combined with the profile decomposition on $\mathbb H^n$, these estimates yield attainment of $H_{NS}$. For these parameters, the minimizer together with the strict factorization through the sharp HLS inequality away from $\mathfrak M$ gives}
\[
H_{NS}>H_{BE}.
\]
Thus the nonlocal interaction genuinely changes the optimal lower stability constant, even though the sharp universal upper defect bound remains equal to $1$.

\vspace{.1cm}

{\bf (D) Stability for critical points.}
Besides the functional-deficit quotient, we also study the stability quotient defined through the Euler--Lagrange residual in $(S^{1,2})^{-1}$. Using the same coupled normal spectral decomposition, we obtain the universal estimate
\[
H_{CP}(1)\le H_{NS}^{\mathrm{spec}}.
\]
This shows that the spectral structure governing the local functional stability also controls the corresponding critical-point stability problem.

\vspace{.2cm}

\noindent{\bf Structure of the paper.}
\begingroup
Section~\ref{sec3} diagonalizes the two quadratic forms generated by the nonlocal linearization and derives the local spectral threshold. Section~\ref{sec4} proves the spectral strictness criterion and the strict two-bubble bound in Theorem~\ref{main thm0}. These estimates provide the compactness input for the minimizer argument in Section~\ref{sec5}. Section~\ref{sec6} determines the sharp universal upper defect constant, and Section~\ref{sec7} proves the local spectral upper bound for the critical-point residual quotient. Finally, Section~\ref{App} contains the explicit verification of $\Gamma_{n,2}<0$. Throughout the paper, $C$ denotes a positive constant whose value may change from line to line.
\endgroup

\section{Coupled spectral problems}\label{sec3}
{The second variation of the nonlocal deficit contains two distinct quadratic forms. We diagonalize them separately and then combine their spectra. The first eigenvalue problem is}

\begin{equation*}
 -\Delta_{\mathbb{H}} \varphi=\nu\left(\int_{\mathbb{H}^{n}}\frac{|U_\mu(\eta)|
^{Q^{\ast}_{\mu}}}{|\eta^{-1}\xi|^{\mu}}{d}\eta\right)|U_\mu|^{Q^{\ast}_{\mu}-2}\varphi,~~\xi\in\mathbb{H}^{n},~~\varphi\in S^{1,2}(\mathbb{H}^{n})\setminus \{0\},
\end{equation*}
which is equivalent to
\begin{equation*}
 -\Delta_{\mathbb{H}} \varphi=\nu U^{Q^{\ast}-2}\varphi,\quad\mathrm{in}~\mathbb{H}^{n},~~\varphi\in S^{1,2}(\mathbb{H}^{n})\setminus \{0\}.
\end{equation*}
From \cite[Section 5.4]{FL12}, it is not difficult to verify that

\begin{equation*}
  \nu_{i,j}=\frac{(Q-2+4i)(Q-2+4j)}{(Q-2)^2}.
\end{equation*}
Then
\begin{equation}\label{eig1}
 \nu_{0,0}=1<\nu_{1,0}=\frac{Q+2}{Q-2}<\nu_{2,0}=\frac{Q+6}{Q-2}<\nu_{1,1}=\frac{(Q+2)^2}{(Q-2)^2}<\cdots
\end{equation}
{After complexification, the eigenspace corresponding to $\nu_{i,j}$ is
\begin{equation*}
 \mathcal E_{i,j}^{\mathbb C}:=\mathcal C^*\mathcal H_{i,j}^{n+1}
 =\mathrm{span}_{\mathbb C}\left\{[\mathcal{J}_\mathcal{C}(\xi)]^{\frac{1}{Q^{\ast}}}\omega_{i,j}^k(\mathcal{C}\xi):
 k=1,\ldots,\mathrm{dim}\mathcal H_{i,j}^{n+1}\right\},
 \qquad i,j=0,1,2,\ldots,
\end{equation*}
where $\{\omega_{i,j}^k\}_{k=1}^{\mathrm{dim}\mathcal H_{i,j}^{n+1}}$ is a complex basis of $\mathcal H_{i,j}^{n+1}$. On the real function space, the corresponding eigenspaces are understood through the real blocks $(\mathcal H_{i,i}^{n+1})_{\mathbb R}$ and $(\mathcal H_{i,j}^{n+1}\oplus\mathcal H_{j,i}^{n+1})_{\mathbb R}$ for $i\ne j$, pulled back by $\mathcal C^*$.}

Next, let us consider another eigenvalue problem
\begin{equation}\label{eigtwo}
 -\Delta_{\mathbb{H}} \psi=\tau\left(\int_{\mathbb{H}^{n}}\frac{|U_\mu(\eta)|
^{Q^{\ast}_{\mu}-1}\psi(\eta)}{|\eta^{-1}\xi|^{\mu}}{d}\eta\right)|U_\mu|^{Q^{\ast}_{\mu}-1},~~\xi\in\mathbb{H}^{n},~~ \psi\in S^{1,2}(\mathbb{H}^{n})\setminus \{0\}.
\end{equation}
Since
\begin{align*}
 \int_{\mathbb{S}^{2n+1}}|\mathcal{C}_*\psi(\zeta)|^2 d \sigma(\zeta)=&\int_{\mathbb{H}^n}|\psi(\xi)|^2
 [\mathcal{J}_\mathcal{C}(\xi)]^{1-\frac{2}{Q^{\ast}}}d \xi\\ \leq & C \int_{\mathbb{H}^n}|\psi(\xi)|^2
 \frac{1}{(1+|z|^2)^2+t^2}d \xi\\
 \leq & C \bigg(\int_{\mathbb{H}^n}|\psi(\xi)|^{Q^{\ast}}d \xi\bigg)^{\frac{2}{Q^{\ast}}}
 \left(\int_{\mathbb{H}^n}\frac{1}{[(1+|z|^2)^2+t^2]^{\frac{Q}{2}}}d \xi\right)^{\frac{Q^{\ast}-2}{Q^{\ast}}}\\
 \leq & C \int_{\mathbb{H}^{n}}|\nabla_{\mathbb{H}}\psi|^{2}{d}\xi<+\infty,
\end{align*}
{the argument of \cite[Lemma 4.5]{YZ251}, with only notational changes, shows that \eqref{eigtwo} is equivalent to}
\begin{equation}\label{eigtwo1}
 \Psi(\zeta)= \mathcal{C}_*\psi(\zeta)=\tau G(Q)\alpha(Q,\mu)2^{\frac{-3Q+\mu+2}{2}}\int_{\mathbb{S}^{2n+1}}\int_{\mathbb{S}^{2n+1}}
 \frac{1}{|1-\zeta\cdot \overline{\zeta'}|^{\frac{Q-2}{2}}}\frac{1}{|1-\zeta'\cdot \overline{\zeta''}|^{\frac{\mu}{2}}}\Psi(\zeta'')d \sigma(\zeta'')d \sigma(\zeta'),
\end{equation}
where
\begin{equation*}
 G(Q)=\frac{2^{n-4}\Gamma^2(\frac{n}{2})}{\pi^{n+1}},\quad \alpha(Q,\mu)=S(Q)^{-\frac{Q-\mu}{2}}C(Q,\mu)^{-1}(2n)^{Q+2-\mu}
 =\frac{2^{2n+2-\frac{\mu}{2}}n^2\Gamma^2\big(n+1-\frac{\mu}{4}\big)}{\pi^{n+1}\Gamma\big(n+1-\frac{\mu}{2}\big)
 }.
\end{equation*}
{Since $\Psi=\mathcal{C}_*\psi\in L^2(\mathbb{S}^{2n+1};\mathbb R)\setminus\{0\}\subset L^2(\mathbb{S}^{2n+1};\mathbb C)$, it has the orthogonal expansion}
\begin{equation*}
 \Psi(\zeta)=\sum\limits_{i,j\geq0}\sum\limits_{k=1}^{\mathrm{dim} \mathcal{H}_{i,j}^{n+1}}\Psi_{i,j}^k Y_{i,j}^k(\zeta),
\end{equation*}
{where $\Psi_{i,j}^k=\int_{\mathbb{S}^{2n+1}}\Psi\,\overline{Y_{i,j}^k}d\sigma$, and at least one coefficient is nonzero. For every index with $\Psi_{i,j}^k\neq0$, combining \eqref{FHA} with \eqref{eigtwo1} gives}
\begin{equation*}
 \Psi_{i,j}^k=\tau G(Q)\alpha(Q,\mu)2^{\frac{-3Q+\mu+2}{2}}E_{i,j}(Q-2)E_{i,j}(\mu) \Psi_{i,j}^k,
\end{equation*}
which is equivalent to
\begin{equation*}
 \tau G(Q)\alpha(Q,\mu)2^{\frac{-3Q+\mu+2}{2}}E_{i,j}(Q-2)E_{i,j}(\mu)=1.
\end{equation*}
Therefore,
\begin{align}\label{eig2}
 \tau_{0,0}=1<\tau_{1,0}=\frac{(n+2)(4n+4-\mu)}{n\mu}<&\tau_{2,0}=\frac{(n+4)(4n+4-\mu)(4n+8-\mu)}{n\mu(\mu+4)}
\nonumber \\ <&\tau_{1,1}=\frac{(n+2)^2(4n+4-\mu)^2}{n^2\mu^2}<\cdots
\end{align}
{The second spectral problem has the same complexified eigenspaces $\mathcal E_{i,j}^{\mathbb C}=\mathcal C^*\mathcal H_{i,j}^{n+1}$ as the first one; only the eigenvalues change from $\nu_{i,j}$ to $\tau_{i,j}$. Accordingly, on the real function space we again use the real blocks $(\mathcal H_{i,i}^{n+1})_{\mathbb R}$ and $(\mathcal H_{i,j}^{n+1}\oplus\mathcal H_{j,i}^{n+1})_{\mathbb R}$, pulled back by $\mathcal C^*$.}

{For $\mathfrak{M}=\{c\mathfrak{g}_{\lambda,\xi}U:c\in\mathbb{R},\lambda>0,\xi\in\mathbb{H}^{n}\}$, the tangent space at $U$ is}
\begin{align*}
 T_U \mathfrak{M}=&{\rm span}\left\{U, \frac{\partial \mathfrak{g}_{r,0}U}{\partial r}\bigg|_{r=1},\frac{\partial \mathfrak{g}_{1,\eta}U}{\partial \eta^{(a)}}\bigg|_{\eta=0},~a=1,\ldots,2n+1\right\}\\
 =&\mathcal C^*\left[(\mathcal H_{0,0}^{n+1})_{\mathbb R}
 \oplus(\mathcal H_{1,0}^{n+1}\oplus\mathcal H_{0,1}^{n+1})_{\mathbb R}\right].
\end{align*}
Therefore, from the above analysis, we obtain the following spectral gap inequalities.
\begin{proposition}\label{sg}
{Let $\phi\in(T_U\mathfrak{M})^\perp$. Then}
\begin{equation}\label{sg1}
 \int_{\mathbb{H}^{n}}|\nabla_\mathbb{H} \phi|^2 d\xi\geq \nu_{2,0}\int_{\mathbb{H}^{n}}\frac{U_\mu
^{Q^{\ast}_{\mu}}(\xi)U_\mu^{Q^{\ast}_{\mu}-2}(\eta)\phi^2(\eta)}{|\eta^{-1}\xi|^{\mu}}{d}\xi d\eta,
\end{equation}
and
\begin{equation}\label{sg2}
 \int_{\mathbb{H}^{n}}|\nabla_\mathbb{H} \phi|^2 d\xi\geq \tau_{2,0}\int_{\mathbb{H}^{n}}\frac{U_\mu
^{Q^{\ast}_{\mu}-1}(\xi)\phi(\xi) U_\mu^{Q^{\ast}_{\mu}-1}(\eta)\phi(\eta)}{|\eta^{-1}\xi|^{\mu}}{d}\xi d\eta,
\end{equation}
{where $\nu_{2,0}$ and $\tau_{2,0}$ are given by \eqref{eig1} and \eqref{eig2}, respectively.} {Moreover, equality in either \eqref{sg1} or \eqref{sg2} holds on the real lowest normal eigenspace corresponding, after the Cayley transform, to
\[
(\mathcal H_{2,0}^{n+1}\oplus\mathcal H_{0,2}^{n+1})_{\mathbb R}.
\]
In particular, a convenient real eigenfunction is
\begin{equation*}
\phi(\xi)=[\mathcal J_{\mathcal C}(\xi)]^{\frac{1}{Q^{\ast}}}\operatorname{Re}\!\left((\mathcal C\xi)_{n+1}^{2}\right).
\end{equation*}
This choice is valid for every $n\ge1$.}
\end{proposition}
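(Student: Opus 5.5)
Both inequalities follow from the fact that the two quadratic forms are diagonal on the complex spherical harmonics. I would decompose $\phi$ into modes and compare mode by mode, using the eigenvalue orderings \eqref{eig1} and \eqref{eig2}.

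\textbf{Step 1: the energy.} Write $\Phi=\mathcal C_*\phi\in L^2(\mathbb S^{2n+1};\mathbb R)$ and expand $\Phi=\sum_{i,j}\Phi_{i,j}$ with $\Phi_{i,j}\in\mathcal H^{n+1}_{i,j}$. The condition $\phi\perp T_U\mathfrak M$ in $S^{1,2}$ must be turned into $\Phi_{0,0}=\Phi_{1,0}=\Phi_{0,1}=0$. Since each element $w$ of $T_U\mathfrak M$ is an eigenfunction of the first problem, $\langle \phi,w\rangle_{S^{1,2}}=\nu\int U^{Q^*-2}\phi w$. This integral equals a constant times the $L^2(\mathbb S^{2n+1})$ pairing of $\Phi$ with $\mathcal C_*w$, because $U^{Q^*-2}\mathcal J_{\mathcal C}^{2/Q^*}$ is a constant multiple of $\mathcal J_{\mathcal C}$ by the Jacobian formula. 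Using the Green's function of the CR conformal Laplacian (the kernel $|1-\zeta\cdot\bar\zeta'|^{-(Q-2)/2}$ with multipliers $E_{i,j}(Q-2)$, as in \eqref{eigtwo1}), the energy becomes
\[
\int|\nabla_{\mathbb H}\phi|^2=\sum_{i,j}c\,\nu_{i,j}\|\Phi_{i,j}\|_{L^2}^2,\qquad \int U^{Q^*-2}\phi^2=\sum_{i,j}c\|\Phi_{i,j}\|^2_{L^2}.
\]
Here $c$ is the same positive constant in both identities, coming from the normalization in which $\nu_{i,j}$ is the eigenvalue.

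\textbf{Step 2: the right-hand side of \eqref{sg1}.} Since $U_\mu=\mathcal AU$ solves \eqref{limit3}, we have $\int U_\mu^{Q^*_\mu}(\xi)|\eta^{-1}\xi|^{-\mu}d\xi\, U_\mu^{Q^*_\mu-2}(\eta)=U^{Q^*-2}(\eta)$. This is the equivalence already used to rewrite the first eigenvalue problem. So the right side of \eqref{sg1} equals $\nu_{2,0}\sum c\|\Phi_{i,j}\|^2$. Because $\nu_{i,j}\ge\nu_{2,0}$ for all $(i,j)\notin\{(0,0),(1,0),(0,1)\}$, \eqref{sg1} follows. One must check that $\nu_{2,0}=\nu_{0,2}$ is the minimum over all remaining modes. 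From $\nu_{i,j}=(Q-2+4i)(Q-2+4j)/(Q-2)^2$, any mode with $i+j\ge2$ has $\nu_{i,j}\ge\min(\nu_{2,0},\nu_{1,1})=\nu_{2,0}$, since $\nu$ increases in each index.

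\textbf{Step 3: the right-hand side of \eqref{sg2}.} The bilinear form $\iint U_\mu^{Q^*_\mu-1}\phi\,U_\mu^{Q^*_\mu-1}\phi\,|\eta^{-1}\xi|^{-\mu}$ pushes forward, via the identity for $|1-\zeta\cdot\bar\zeta'|$ and the Jacobian, to a constant times $\iint |1-\zeta\cdot\bar\zeta'|^{-\mu/2}\Phi(\zeta)\overline{\Phi(\zeta')}$. By Funk--Hecke this equals $\sum E_{i,j}(\mu)\|\Phi_{i,j}\|^2$. Comparing with Step 1 through the definition of $\tau_{i,j}$, the constants match exactly so that
\[
\text{RHS of \eqref{sg2}}=\tau_{2,0}\sum c\,\frac{\nu_{i,j}}{\tau_{i,j}}\|\Phi_{i,j}\|^2 .
\]
This is the same derivation as \eqref{eigtwo1}. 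The inequality therefore reduces to $\tau_{i,j}\ge\tau_{2,0}$ for $i+j\ge2$. Since $\tau_{i,j}\propto 1/(E_{i,j}(Q-2)E_{i,j}(\mu))$ and each factor $\Gamma(i+a)/\Gamma(i+n+1-a)$ is decreasing in $i$ for $a\in(0,n+1)$, $\tau$ is increasing in each index. So the minimum over $i+j\ge2$ is $\min(\tau_{2,0},\tau_{1,1})=\tau_{2,0}$, as \eqref{eig2} records. Equality holds exactly when $\Phi$ is supported in $\mathcal H_{2,0}\oplus\mathcal H_{0,2}$. The function $\operatorname{Re}(\zeta_{n+1}^2)$ is harmonic ($\zeta_{n+1}^2$ is holomorphic), real, and lies in that block, giving the stated eigenfunction for all $n\ge1$.

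\textbf{Main obstacle.} The delicate point is the bookkeeping of constants in Step 3. One must verify that after pushforward the HLS bilinear form has multiplier exactly proportional to $\nu_{i,j}/\tau_{i,j}$ with the same proportionality constant as the Dirichlet form. I would extract this directly from \eqref{eigtwo1} and \eqref{FHA} rather than recomputing: $\tau_{i,j}$ is defined precisely as the ratio making the two forms agree on $\mathcal H_{i,j}$. The real-versus-complex issue is handled by noting that for real $\Phi$ the blocks $\mathcal H_{i,j}\oplus\mathcal H_{j,i}$ are conjugation-invariant and the multipliers are symmetric in $(i,j)$.
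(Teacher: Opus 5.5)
Your proposal is correct and follows the same route as the paper. The paper derives the proposition directly from three ingredients: the simultaneous diagonalization of both quadratic forms on $\mathcal C^*\mathcal H_{i,j}^{n+1}$, the eigenvalue orderings \eqref{eig1}--\eqref{eig2}, and the identification of $T_U\mathfrak M$ with the $(0,0)$, $(1,0)$, $(0,1)$ blocks; you supply the bookkeeping the paper leaves implicit. One small correction: $\Gamma(i+a)/\Gamma(i+n+1-a)$ is decreasing in $i$ only when $a<\frac{n+1}{2}$, since consecutive terms have ratio $\frac{i+a}{i+n+1-a}$, not for all $a\in(0,n+1)$. This condition does hold for both $a=\frac{Q-2}{4}=\frac n2$ and $a=\frac{\mu}{4}$ because $\mu<Q$, so your conclusion that $\tau_{i,j}$ increases in each index stands.
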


{We next record the local lower bound for sequences approaching the extremal manifold. This is the compactness threshold used in Section~\ref{sec5}.}
\begin{lemma}\label{local}
Let $Q\geq 4$, $0<\mu< Q$ with $\mu\leq 4$. For any sequence $\{u_k\}\subset S^{1,2}(\mathbb{H}^{n})\setminus \mathfrak{M}$ satisfying
\begin{equation*}
 \inf\limits_k \|u_k\|_{S^{1,2}(\mathbb{H}^{n})}>0,\quad \mathrm{dist}(u_k,\mathfrak{M})\rightarrow0,
\end{equation*}
we have
\begin{equation}\label{local1}
 \liminf\limits_{k\rightarrow+\infty}\mathcal{L}(u_k)\geq 1-\left(\frac{Q^{\ast}_{\mu}-1}{\nu_{2,0}}+\frac{Q^{\ast}_{\mu}}{\tau_{2,0}}\right),
\end{equation}
{where $\nu_{2,0}$ and $\tau_{2,0}$ are given by \eqref{eig1} and \eqref{eig2}, respectively.}
\end{lemma}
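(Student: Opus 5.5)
The argument is the standard Bianchi--Egnell local expansion: reduce to a unit-normalized bubble plus a small orthogonal perturbation, then expand the deficit to second order.

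\textbf{Reduction.} Put $d_k=\mathrm{dist}(u_k,\mathfrak M)\to0$. Since $\inf_k\|u_k\|>0$ and $\mathfrak M$ is a smooth finite-dimensional manifold invariant under the group actions, the infimum defining $d_k$ is attained for $k$ large. So there are $c_k\neq0$, $\lambda_k>0$, $\xi_k\in\mathbb H^n$ with
\[
u_k=c_k\mathfrak g_{\lambda_k,\xi_k}U+d_k w_k,\qquad \|w_k\|_{S^{1,2}}=1,
\]
and $w_k$ orthogonal to the tangent space of $\mathfrak M$ at $c_k\mathfrak g_{\lambda_k,\xi_k}U$. Both the numerator and the denominator of $\mathcal L$ are invariant under $\mathfrak g_{\lambda,\xi}$, which is an isometry of $S^{1,2}$ and leaves the double integral unchanged. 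They are also $2$-homogeneous. We may therefore assume $u_k=U_\mu+\epsilon_k\phi_k$, with $\phi_k\in(T_U\mathfrak M)^\perp$, $\|\phi_k\|=\|U_\mu\|^{-1}$-normalized (or simply $\|\phi_k\|=1$ after rescaling), $\epsilon_k\to0$, and $\mathrm{dist}(u_k,\mathfrak M)=\epsilon_k\|\phi_k\|$.

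\textbf{Second-order expansion.} Write $D(u)=\iint|u(\xi)|^{Q^*_\mu}|u(\eta)|^{Q^*_\mu}|\eta^{-1}\xi|^{-\mu}\,d\xi\,d\eta$ and expand $D(U_\mu+\epsilon\phi)$ to order $\epsilon^2$. The first-order term is $2Q^*_\mu\iint U_\mu^{Q^*_\mu}(\xi)U_\mu^{Q^*_\mu-1}(\eta)\phi(\eta)|\eta^{-1}\xi|^{-\mu}$. By \eqref{limit3} this equals $2Q^*_\mu\langle U_\mu,\phi\rangle$, which vanishes by orthogonality. The second-order term is
\[
\epsilon^2\Big[Q^*_\mu(Q^*_\mu-1)A(\phi)+(Q^*_\mu)^2B(\phi)\Big],
\]
where $A(\phi)=\iint U_\mu^{Q^*_\mu}(\xi)U_\mu^{Q^*_\mu-2}(\eta)\phi^2(\eta)|\eta^{-1}\xi|^{-\mu}$ and $B(\phi)=\iint U_\mu^{Q^*_\mu-1}\phi(\xi)U_\mu^{Q^*_\mu-1}\phi(\eta)|\eta^{-1}\xi|^{-\mu}$. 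The remainder is $o(\epsilon^2)$ uniformly for $\|\phi\|=1$.

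This is the step I expect to be the main obstacle, and it is where $\mu\le4$ enters. Since $Q^*_\mu-2=(4-\mu)/(Q-2)\ge0$, the maps $s\mapsto|s|^{Q^*_\mu}$ and $s\mapsto |s|^{Q^*_\mu-2}s$ have the regularity needed for the elementary inequalities
\[
\big||a+b|^{p}-a^{p}-pa^{p-1}b-\tfrac{p(p-1)}2a^{p-2}b^2\big|\le C(|b|^{p}+a^{p-2-\theta}|b|^{2+\theta})
\]
with $p=Q^*_\mu\in(1,2]$-type exponents. These are to be combined with the Heisenberg HLS inequality (Proposition~\ref{pro:HLS}) and Folland--Stein Sobolev in order to bound the cross terms by $o(\epsilon^2)$. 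I would handle the remainder exactly as in \cite{DTYZ23,CW26}: split the product $|U_\mu+\epsilon\phi|^{Q^*_\mu}(\xi)\,|U_\mu+\epsilon\phi|^{Q^*_\mu}(\eta)$ into the expansions in each variable, and estimate each error by HLS with exponents $2Q/(2Q-\mu)$.

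\textbf{Conclusion.} Normalize using $\|U_\mu\|^2=D(U_\mu)$ (from \eqref{limit3}) and $S_{HL}=\|U_\mu\|^2D(U_\mu)^{-1/Q^*_\mu}$. Taking the $1/Q^*_\mu$ power of the expansion, the $\epsilon^2$ coefficient of $S_{HL}D(u_k)^{1/Q^*_\mu}$ is $(Q^*_\mu-1)A(\phi_k)+Q^*_\mu B(\phi_k)$, since the squared first-order term vanishes. The constant factor is exactly $1$, because the equation normalization makes the $U_\mu^{2Q^*_\mu}$ kernel integral equal to $\|U_\mu\|^2$. Hence
\[
\mathcal L(u_k)=1-\frac{(Q^*_\mu-1)A(\phi_k)+Q^*_\mu B(\phi_k)}{\|\phi_k\|^2}+o(1).
\]
Applying \eqref{sg1} and \eqref{sg2} of Proposition~\ref{sg} gives $A(\phi_k)\le\|\phi_k\|^2/\nu_{2,0}$ and $B(\phi_k)\le\|\phi_k\|^2/\tau_{2,0}$, which yields \eqref{local1}.
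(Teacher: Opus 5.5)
Your proposal is correct and follows essentially the same route as the paper: closest-point decomposition with $w_k\perp T\mathfrak M$, vanishing of the first-order term via \eqref{limit3}, a second-order expansion of the double integral that is legitimate because $Q^{\ast}_{\mu}\ge 2$ for $\mu\le 4$, and the two spectral-gap inequalities \eqref{sg1}--\eqref{sg2}. The only real difference is that you normalize $c_k,\lambda_k,\xi_k$ away at the outset by invariance and $2$-homogeneity, while the paper carries $c_k$ and the constant $\mathcal A$ through the computation; your normalization silently uses $c_k^2\|\nabla_{\mathbb H}U\|_{L^2}^2=\|u_k\|_{S^{1,2}}^2-d_k^2$ together with $\inf_k\|u_k\|_{S^{1,2}}>0$ to get $\epsilon_k\to0$. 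One small slip: the exponent in your Taylor remainder is $p=Q^{\ast}_{\mu}\ge 2$, not $p\in(1,2]$. It is precisely $p\ge 2$ (with an exact expansion when $\mu=4$) that makes that remainder bound, and hence your $o(\epsilon^2)$ estimate, valid.
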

\begin{proof}
{A direct computation gives}
\begin{equation*}
 1-\left(\frac{Q^{\ast}_{\mu}-1}{\nu_{2,0}}+\frac{Q^{\ast}_{\mu}}{\tau_{2,0}}\right)=\frac{(4n+8-2\mu)(\mu+4)}{2(n+4)(4n+8-\mu)}>\frac{2}{n+4}
\end{equation*}
for all $\mu\in (0,Q)$.
{Set $d_k={\rm dist}(u_k,\mathfrak{M})=\inf_{c\in \mathbb{R},\lambda>0,\xi\in \mathbb{H}^n}\|\nabla_\mathbb{H}(u_k-c\mathfrak{g}_{\lambda,\xi}U)\|_{L^{2}(\mathbb{H}^{n})}$. Then $d_k\to0$ as $k\to+\infty$.}
Since $d_k\to0$ and $\inf_k\|u_k\|_{S^{1,2}}>0$, the coefficient of a closest point cannot tend to zero. Thus, for $k$ large, the closest point lies in the smooth part $\mathfrak{M}^*=\mathfrak{M}\setminus \{0\}$. By the local projection theorem onto a finite-dimensional $C^2$ submanifold of a Hilbert space, after passing to a subsequence, there exists $(c_k,\lambda_k,\xi_k)\in \mathbb{R}\setminus \{0\}\times\mathbb{R}^+\times\mathbb{H}^n$ such that
\begin{equation*}
 d_k=\big\|\nabla_\mathbb{H}(u_k-c_k\mathfrak{g}_{\lambda_k,\xi_k}U)\big\|_{{L}^{2}(\mathbb{H}^n)}.
\end{equation*}
{Since $\mathfrak M^*$ is a $(2n+3)$-dimensional smooth manifold embedded in $S^{1,2}(\mathbb H^n)$,}
we have
 \begin{equation*}
 \big(u_k-c_k\mathfrak{g}_{\lambda_k,\xi_k}U\big)\perp T_{c_k\mathfrak{g}_{\lambda_k,\xi_k}U}\mathfrak{M},
 \end{equation*}
 where the tangent space at $(c_k,\lambda_k,\xi_k)$ is given by
 \begin{equation*}
 T_{c_k\mathfrak{g}_{\lambda_k,\xi_k}U}\mathfrak{M}={\rm span}\left\{\mathfrak{g}_{\lambda_k,\xi_k}U, \frac{\partial \mathfrak{g}_{r,\xi_k}U}{\partial r}\bigg|_{r=\lambda_k},\frac{\partial \mathfrak{g}_{\lambda_k,\eta}U}{\partial \eta^{(a)}}\bigg|_{\eta=\xi_k},~a=1,\ldots,2n+1
 \right\}.
 \end{equation*}
 Let
 \begin{equation*}
 u_k=c_k\mathfrak{g}_{\lambda_k,\xi_k}U+d_k w_k.
 \end{equation*}
 {Then $w_k$ is perpendicular to $T_{c_k\mathfrak{g}_{\lambda_k,\xi_k}U}\mathfrak{M}$,} $\|\nabla_\mathbb{H}w_k\|^2_{L^{2}(\mathbb{H}^n)}=1$ and
 \begin{equation*}
 \|\nabla_\mathbb{H}u_k\|^2_{L^{2}(\mathbb{H}^n)}=d_k^2\|\nabla_\mathbb{H}w_k\|
 _{L^{2}(\mathbb{H}^n)}^2+c_k^2\|\nabla_\mathbb{H}\mathfrak{g}_{\lambda_k,\xi_k}U\|^2_
 {L^{2}(\mathbb{H}^n)}
 =d_k^2+c_k^2\|\nabla_\mathbb{H}U\|^2_{L^{2}(\mathbb{H}^n)},
 \end{equation*}
 where we used
 \begin{equation*}
 \|\nabla_\mathbb{H}\mathfrak{g}_{\lambda_k,\xi_k}U\|_{L^{2}(\mathbb{H}^n)}=\|\nabla_\mathbb{H}U\|
 _{L^{2}(\mathbb{H}^n)}.
 \end{equation*}
 {Since $Q^{\ast}_{\mu}\geq2$, the preceding orthogonality gives}
 \begin{align}\label{local3}
 \int_{\mathbb{H}^{n}}\int_{\mathbb{H}^{n}}\frac{|u_k|^{Q^{\ast}_{\mu}}|u_k|
^{Q^{\ast}_{\mu}}}{|\eta^{-1}\xi|^{\mu}}&{d}\xi{d}\eta
 = c_k^{2\cdot Q_{\mu}^{\ast}} \int_{\mathbb{H}^{n}}\int_{\mathbb{H}^{n}}\frac{|\mathfrak{g}_{\lambda_k,\xi_k}U|^{Q^{\ast}_{\mu}}|\mathfrak{g}_{\lambda_k,\xi_k}U|
^{Q^{\ast}_{\mu}}}{|\eta^{-1}\xi|^{\mu}}{d}\xi{d}\eta
\nonumber \\& + Q_{\mu}^{\ast}(Q_{\mu}^{\ast}-1)c_k^{2(Q_{\mu}^{\ast}-1)}d_k^2 {\int_{\mathbb{H}^{n}}\int_{\mathbb{H}^{n}}\frac{|\mathfrak{g}_{\lambda_k,\xi_k}U|^{Q^{\ast}_{\mu}}|\mathfrak{g}_{\lambda_k,\xi_k}U|
^{Q^{\ast}_{\mu}-2} w_k^2}{|\eta^{-1}\xi|^{\mu}}{d}\xi{d}\eta} \nonumber\\
 & + (Q_{\mu}^{\ast})^2c_k^{2(Q_{\mu}^{\ast}-1)}d_k^2 \int_{\mathbb{H}^{n}}\int_{\mathbb{H}^{n}}\frac{|\mathfrak{g}_{\lambda_k,\xi_k}U|^{Q^{\ast}_{\mu}-1}w_k|\mathfrak{g}_{\lambda_k,\xi_k}U|
^{Q^{\ast}_{\mu}-1}w_k}{|\eta^{-1}\xi|^{\mu}}{d}\xi{d}\eta+ o(d_k^2),
 \end{align}
 since
 \begin{equation*}
 \int_{\mathbb{H}^{n}}\int_{\mathbb{H}^{n}}\frac{|\mathfrak{g}_{\lambda_k,\xi_k}U|^{Q^{\ast}_{\mu}}|\mathfrak{g}_{\lambda_k,\xi_k}U|
^{Q^{\ast}_{\mu}-1}w_k}{|\eta^{-1}\xi|^{\mu}}{d}\xi{d}\eta=S(Q)^{\frac{Q-\mu}{2}}C(Q,\mu)\int_{\mathbb{H}^{n}} \nabla_\mathbb{H}\mathfrak{g}_{\lambda_k,\xi_k}U\cdot \nabla_\mathbb{H}w_k d\xi=0.
 \end{equation*}
By \eqref{folland-stein} and \eqref{eq:HLSH}, using the
 H\"{o}lder inequality, we obtain
 \begin{align*}
 &\int_{\mathbb{H}^{n}}\int_{\mathbb{H}^{n}}\frac{|\mathfrak{g}_{\lambda_k,\xi_k}U|^{Q^{\ast}_{\mu}}|\mathfrak{g}_{\lambda_k,\xi_k}U|
^{Q^{\ast}_{\mu}}}{|\eta^{-1}\xi|^{\mu}}{d}\xi{d}\eta \\ \leq & C(Q,\mu)\|\mathfrak{g}_{\lambda_k,\xi_k}U\|_{L^{Q^{\ast}}(\mathbb{H}^{n})}^{2\cdot Q^{\ast}_{\mu}}=
C(Q,\mu)\|U\|_{L^{Q^{\ast}}(\mathbb{H}^{n})}^{2\cdot Q^{\ast}_{\mu}}\\
=&C(Q,\mu)S(Q)^{\frac{Q+2-\mu}{2}}\|U\|_{L^{Q^{\ast}}(\mathbb{H}^{n})}^{2}\leq S(Q)^{\frac{Q-\mu}{2}}C(Q,\mu)\|\nabla_\mathbb{H}U\|^2_{L^{2}(\mathbb{H}^{n})},
 \end{align*}
{where we used}
 \begin{equation*}
 \|\mathfrak{g}_{\lambda_k,\xi_k}U\|_{L^{Q^{\ast}}(\mathbb{H}^{n})}=\|U\|_{L^{Q^{\ast}}(\mathbb{H}^{n})}.
 \end{equation*}
From Proposition \ref{sg}, by a simple scaling argument, we have
\begin{equation*}
 \int_{\mathbb{H}^{n}}\int_{\mathbb{H}^{n}}\frac{|\mathfrak{g}_{\lambda_k,\xi_k}U_\mu|^{Q^{\ast}_{\mu}}|\mathfrak{g}_{\lambda_k,\xi_k}U_\mu|
^{Q^{\ast}_{\mu}-2} w_k^2}{|\eta^{-1}\xi|^{\mu}}{d}\xi{d}\eta\leq\frac{1}{\nu_{2,0}},
\end{equation*}
and
\begin{equation*}
 \int_{\mathbb{H}^{n}}\int_{\mathbb{H}^{n}}\frac{|\mathfrak{g}_{\lambda_k,\xi_k}U_\mu|^{Q^{\ast}_{\mu}-1}w_k|\mathfrak{g}_{\lambda_k,\xi_k}U_\mu|
^{Q^{\ast}_{\mu}-1} w_k}{|\eta^{-1}\xi|^{\mu}}{d}\xi{d}\eta\leq\frac{1}{\tau_{2,0}}.
\end{equation*}
{Since $U_\mu=\mathcal AU$, where $\mathcal A=S(Q)^{\frac{(Q-\mu)(2-Q)}{4(Q+2-\mu)}}C(Q,\mu)^{\frac{2-Q}{2(Q+2-\mu)}}$, these inequalities are equivalent to}
\begin{equation*}
 \int_{\mathbb{H}^{n}}\int_{\mathbb{H}^{n}}\frac{|\mathfrak{g}_{\lambda_k,\xi_k}U|^{Q^{\ast}_{\mu}}|\mathfrak{g}_{\lambda_k,\xi_k}U|
^{Q^{\ast}_{\mu}-2} w_k^2}{|\eta^{-1}\xi|^{\mu}}{d}\xi{d}\eta\leq\frac{1}{\mathcal{A}^{2(Q^{\ast}_{\mu}-1)}\nu_{2,0}},
\end{equation*}
and
\begin{equation*}
 \int_{\mathbb{H}^{n}}\int_{\mathbb{H}^{n}}\frac{|\mathfrak{g}_{\lambda_k,\xi_k}U|^{Q^{\ast}_{\mu}-1}w_k|\mathfrak{g}_{\lambda_k,\xi_k}U|
^{Q^{\ast}_{\mu}-1} w_k}{|\eta^{-1}\xi|^{\mu}}{d}\xi{d}\eta\leq\frac{1}{\mathcal{A}^{2(Q^{\ast}_{\mu}-1)}\tau_{2,0}}.
\end{equation*}
 Thus it follows from \eqref{local3} that
 \begin{align*}
 &\left(\int_{\mathbb{H}^{n}}\int_{\mathbb{H}^{n}}\frac{|u_k|^{Q^{\ast}_{\mu}}|u_k|
^{Q^{\ast}_{\mu}}}{|\eta^{-1}\xi|^{\mu}}{d}\xi{d}\eta\right)^{\frac{1}{Q^{\ast}_{\mu}}}\\
\leq& \left(c_k^{2\cdot Q_{\mu}^{\ast}} S(Q)^{\frac{Q-\mu}{2}}C(Q,\mu) \|\nabla_\mathbb{H}U\|^2_{L^{2}(\mathbb{H}^{n})}+\frac{Q_{\mu}^{\ast}(Q_{\mu}^{\ast}-1)c_k^{2(Q_{\mu}^{\ast}-1)}d_k^2}{\mathcal{A}^{2(Q^{\ast}_{\mu}-1)}\nu_{2,0}}
+\frac{(Q_{\mu}^{\ast})^2c_k^{2(Q_{\mu}^{\ast}-1)}d_k^2}{\mathcal{A}^{2(Q^{\ast}_{\mu}-1)}\tau_{2,0}}+o(d_k^2)\right)^{\frac{1}{Q^{\ast}_{\mu}}}\\
=& \mathcal{A}^{\frac{2}{Q^{\ast}_{\mu}}-2}
\left[c_k^{2} \|\nabla_\mathbb{H}U\|^{\frac{2}{Q^{\ast}_{\mu}}}_{L^{2}(\mathbb{H}^{n})}+\|\nabla_\mathbb{H}U\|^{\frac{2}{Q^{\ast}_{\mu}}-2}_{L^{2}(\mathbb{H}^{n})}
\left(\frac{Q^{\ast}_{\mu}-1}{\nu_{2,0}}+\frac{Q^{\ast}_{\mu}}{\tau_{2,0}}\right)d_k^2\right]+o(d_k^2).
 \end{align*}
{Therefore,}
 \begin{align*}
 & \int_{\mathbb{H}^{n}}|\nabla_{\mathbb{H}} u_k|^{2}{d}\xi-S_{HL}(Q,\mu) \left(\int_{\mathbb{H}^{n}}\int_{\mathbb{H}^{n}}\frac{|u_k|^{Q^{\ast}_{\mu}}|u_k|
^{Q^{\ast}_{\mu}}}{|\eta^{-1}\xi|^{\mu}}{d}\xi{d}\eta\right)^{\frac{1}{Q^{\ast}_{\mu}}}\\
\geq&
d_k^2\left[1-S_{HL}(Q,\mu)
\mathcal{A}^{\frac{2}{Q^{\ast}_{\mu}}-2} \|\nabla_\mathbb{H}U\|^{\frac{2}{Q^{\ast}_{\mu}}-2}_{L^{2}(\mathbb{H}^{n})}\left(\frac{Q^{\ast}_{\mu}-1}{\nu_{2,0}}+\frac{Q^{\ast}_{\mu}}{\tau_{2,0}}\right)\right]\\
&+c_k^2\left(\|\nabla_\mathbb{H}U\|^2_{{L}^{2}(\mathbb{H}^n)}-S_{HL}(Q,\mu)
\mathcal{A}^{\frac{2}{Q^{\ast}_{\mu}}-2} \|\nabla_\mathbb{H}U\|^{\frac{2}{Q^{\ast}_{\mu}}}_{L^{2}(\mathbb{H}^{n})}\right)+o(d_k^2)
\\
=&\left[1-\left(\frac{Q^{\ast}_{\mu}-1}{\nu_{2,0}}+\frac{Q^{\ast}_{\mu}}{\tau_{2,0}}\right)\right]d_k^2+o(d_k^2),
 \end{align*}
{where we used}
 \begin{align*}
 &S_{HL}(Q,\mu)\mathcal{A}^{\frac{2}{Q^{\ast}_{\mu}}-2} \|\nabla_\mathbb{H}U\|^{\frac{2}{Q^{\ast}_{\mu}}-2}_{L^{2}(\mathbb{H}^{n})}\\
 =&S(Q)C(Q,\mu)^{-\frac{1}{Q^{\ast}_{\mu}}}
 \left(S(Q)^{\frac{(Q-\mu)(2-Q)}{4(Q+2-\mu)}}C(Q,\mu)^{\frac{2-Q}{2(Q+2-\mu)}}\right)^{-\frac{2(Q+2-\mu)}{2Q-\mu}}
 S(Q)^{-\frac{Q(Q+2-\mu)}{2(2Q-\mu)}}=1.
 \end{align*}
{Dividing by $d_k^2=\mathrm{dist}(u_k,\mathfrak M)^2$ and letting $k\to\infty$ yields \eqref{local1}.}
\end{proof}

The preceding lemma identifies the limiting value of the stability quotient near the bubble manifold. The next section gives an explicit fourth-order criterion for improving this limiting value and complements it with a second threshold coming from bubble splitting.

\section{\texorpdfstring{{Upper bounds for $H_{NS}$: Proof of Theorem~\ref{main thm0}}}{Upper bounds for HNS: Proof of Theorem 3.1}}\label{sec4}

We use two genuinely different test configurations. A corrected perturbation of one bubble yields the spectral strictness criterion, whereas two separated bubbles give the unconditional dichotomy threshold relevant to concentration-compactness.

\begingroup
\subsection{Spectral upper bound}\label{sec41}
\begingroup
The cubic contribution vanishes along the lowest real normal eigenspace. Indeed, if \(P\in\mathcal H_{2,0}^{n+1}\), then
\[
P(e^{\mathrm i\vartheta}\zeta)=e^{2\mathrm i\vartheta}P(\zeta).
\]
Since \(d\sigma\) is \(U(1)\)-invariant,
\[
\int_{\mathbb S^{2n+1}}P^3d\sigma=0.
\]
More generally, if
\[
\phi\in(\mathcal H_{2,0}^{n+1}\oplus\mathcal H_{0,2}^{n+1})_{\mathbb R},
\]
then \(\phi=P+\overline P\) for a suitable \(P\in\mathcal H_{2,0}^{n+1}\), and the four terms in \(\phi^3\) have \(U(1)\)-weights \(6,2,-2,-6\). Hence
\[
\int_{\mathbb S^{2n+1}}\phi^3d\sigma=0.
\]
Thus a third-order expansion cannot produce strictness in this direction. We therefore introduce a second-order normal correction and expand the quotient to fourth order, following \cite[Proposition~6.4]{Li26}. Because the HLS term is nonlocal, all fourth-order interaction coefficients must be recomputed.
\endgroup

\begin{proof}[Proof of Theorem~\ref{main thm0}-\eqref{upper1}]
Put
\[
q:=Q^{\ast}_{\mu}=\frac{4n+4-\mu}{2n},\qquad
\nu_2:=\nu_{2,0}=\frac{n+4}{n},
\]
and use probability measure $d\sigma$ on $\mathbb S^{2n+1}$. After the Cayley transform and the amplitude normalization sending $U_\mu$ to $1$, let $\mathscr A$ be the normalized CR energy operator. {To keep the double-integral structure visible, we do not encode the HLS term by a one-variable operator. Instead, define the normalized symmetric bilinear form
\begin{equation*}
 \mathcal B_\mu(F,G):=\frac{1}{\mathcal N_\mu}
 \int_{\mathbb S^{2n+1}}\int_{\mathbb S^{2n+1}}
 \frac{F(\zeta)G(\eta)}{|1-\zeta\cdot\overline\eta|^{\frac{\mu}{2}}}d\sigma(\zeta)d\sigma(\eta),
 \qquad
 \mathcal N_\mu:=\int_{\mathbb S^{2n+1}}\int_{\mathbb S^{2n+1}}
 \frac{d\sigma(\zeta)d\sigma(\eta)}{|1-\zeta\cdot\overline\eta|^{\frac{\mu}{2}}}.
\end{equation*}
Thus $\mathcal B_\mu(1,1)=1$. By the Funk--Hecke formula, the integral operator associated with $\mathcal B_\mu$ acts on each complex spherical harmonic space $\mathcal H_{i,j}^{n+1}$ by the scalar
\begin{equation*}
 \vartheta_{i,j}:=\frac{E_{i,j}(\mu)}{E_{0,0}(\mu)}=\frac{\nu_{i,j}}{\tau_{i,j}}.
\end{equation*}
Consequently, on real functions the mutually orthogonal real blocks {$(\mathcal H_{i,i}^{n+1})_{\mathbb R}$} and $(\mathcal H_{i,j}^{n+1}\oplus\mathcal H_{j,i}^{n+1})_{\mathbb R}$ for $i\ne j$ diagonalize $\mathcal B_\mu$: the form vanishes between distinct blocks, while on a block indexed by $(i,j)$,
\begin{equation*}
 \mathcal B_\mu(F,G)=\vartheta_{i,j}\int_{\mathbb S^{2n+1}}FGd\sigma.
\end{equation*}}
On the bispherical harmonics,
\begin{equation*}
 \mathscr A|_{\mathcal H_{i,j}^{n+1}}=\nu_{i,j},
\end{equation*}
and $\mathscr A1=1$. {Up to the same common positive factor as in the original Heisenberg-group quotient, the spherical deficit is therefore
\begin{equation*}
 \mathscr D_\mu(v):=\langle v,\mathscr Av\rangle-
 \mathcal B_\mu(|v|^q,|v|^q)^{\frac{1}{q}}.
\end{equation*}}
Let $\mathfrak M_{\mathbb S}$ be the real optimizer cone and use the $\mathscr A$-metric for the distance.

Choose
\begin{equation}\label{phiLi}
 \phi(\zeta)=\operatorname{Re}(\zeta_{n+1}^2)
 \in(\mathcal H_{2,0}^{n+1}\oplus\mathcal H_{0,2}^{n+1})_{\mathbb R}.
\end{equation}
The elementary beta moments give
\begin{equation*}
 M_2:=\int_{\mathbb S^{2n+1}}\phi^2d\sigma=\frac1{(n+1)(n+2)},\qquad
 \int_{\mathbb S^{2n+1}}\phi^3d\sigma=0,
\end{equation*}
and
\begin{equation*}
 M_4:=\int_{\mathbb S^{2n+1}}\phi^4d\sigma=\frac9{(n+1)(n+2)(n+3)(n+4)}.
\end{equation*}
Apart from its constant component, $\phi^2$ has only the real components
$40$, $11$, and $22$. Write $P_\alpha$ for the corresponding orthogonal projections, $\mathcal I=\{40,11,22\}$, and
\begin{equation}\label{projection-masses}
\begin{aligned}
 m_{40}&:=\|P_{40}\phi^2\|_2^2=\frac3{(n+1)(n+2)(n+3)(n+4)},\\
 m_{11}&:=\|P_{11}\phi^2\|_2^2=\frac{4n}{(n+1)^2(n+2)(n+3)^2},\\
 m_{22}&:=\|P_{22}\phi^2\|_2^2=\frac{n}{(n+2)^2(n+3)^2(n+4)}.
\end{aligned}
\end{equation}

For brevity, we write
\begin{equation*}
  \nu_{40}:=\nu_{4,0},\quad \nu_{11}:=\nu_{1,1},\quad \nu_{22}:=\nu_{2,2},
\end{equation*}
and similarly for $\vartheta_\alpha$.

Set
\begin{equation*}
 \vartheta_2:=\vartheta_{2,0}=\frac{\mu(\mu+4)}{(4n+4-\mu)(4n+8-\mu)},\qquad
 b:=(q-1)+q\vartheta_2.
\end{equation*}
Then
\begin{equation*}
 \kappa_{\rm spec}:=1-\frac b{\nu_2}
 =1-\left(\frac{q-1}{\nu_{2,0}}+\frac q{\tau_{2,0}}\right)
 =H_{NS}^{\rm spec}.
\end{equation*}
For $\alpha\in\mathcal I$,
we have
\begin{equation}\label{higher-eigenvalues}
  \nu_{40}=\frac{n+8}{n},\quad \nu_{11}=\frac{(n+2)^2}{n^2}, \quad \nu_{22}=\frac{(n+4)^2}{n^2}.
\end{equation}
By the definition of $\vartheta_{i,j}$, we further obtain

\begin{equation*}
\begin{aligned}
 \vartheta_{40}=\frac{\mu(\mu+4)(\mu+8)(\mu+12)}{(4n+4-\mu)(4n+8-\mu)(4n+12-\mu)(4n+16-\mu)},\
 \vartheta_{11}=\left(\frac\mu{4n+4-\mu}\right)^2,\
 \vartheta_{22}=\vartheta_2^2.
\end{aligned}
\end{equation*}
It is useful to introduce the quadratic quotient at each normal level,
\begin{equation*}
 \kappa_\alpha:=1-\frac{(q-1)+q\vartheta_\alpha}{\nu_\alpha}.
\end{equation*}
{Since
\[
\kappa_\alpha=1-\left(\frac{q-1}{\nu_\alpha}+\frac{q}{\tau_\alpha}\right),
\]
and} \(\nu_\alpha>\nu_{2,0}\), \(\tau_\alpha>\tau_{2,0}\) for every \(\alpha\in\mathcal I\), we have $\kappa_\alpha>\kappa_{\rm spec}$.

Take
\begin{equation*}
 r_\epsilon=\epsilon\phi+\epsilon^2\psi,\qquad
 \psi=\sum_{\alpha\in\mathcal I}\psi_\alpha,\qquad
 \psi_\alpha\in P_\alpha L^2(\mathbb S^{2n+1}).
\end{equation*}
{The modes occurring in $\phi$ and $\psi$ are orthogonal to the tangent modes of the optimizer cone at $1$; hence $r_\epsilon$ lies in the normal space at $1$, and $\langle\phi,\psi\rangle=0$. By the tubular-neighborhood projection, the nearest point of $1+r_\epsilon$ on $\mathfrak M_{\mathbb S}$ is therefore $1$ for $|\epsilon|$ sufficiently small. Consequently,}
\begin{equation}\label{exact-distance-fourth}
 \operatorname{dist}_{\mathscr A}(1+r_\epsilon,\mathfrak M_{\mathbb S})^2
 =\nu_2M_2\epsilon^2+\epsilon^4\sum_{\alpha\in\mathcal I}\nu_\alpha\|\psi_\alpha\|_2^2.
\end{equation}

We now give the fourth-order calculation explicitly. Put
{\begin{equation*}
 J_2:=\mathcal B_\mu(\phi^2,\phi^2)
 =M_2^2+\sum_{\alpha\in\mathcal I}\vartheta_\alpha m_\alpha.
\end{equation*}}
By spectral orthogonality,
\[
\int_{\mathbb S^{2n+1}}\phi d\sigma
=\int_{\mathbb S^{2n+1}}\psi d\sigma
=\int_{\mathbb S^{2n+1}}\phi\psi d\sigma=0,
\]
while the $U(1)$-symmetry gives
\[
\int_{\mathbb S^{2n+1}}\phi^3 d\sigma=0.
\]
Since $\mathcal B_\mu(\phi,h)
=\vartheta_2\int_{\mathbb S^{2n+1}}\phi h d\sigma$, these identities also imply
$\mathcal B_\mu(\phi,\psi)=\mathcal B_\mu(\phi,\phi^2)=0$.
Consequently, the linear and cubic terms in the following expansion vanish.
A Taylor expansion at the positive constant $1$ gives
\begin{align}\label{I-fourth}
 &{\mathcal B_\mu((1+r_\epsilon)^q,(1+r_\epsilon)^q)}\nonumber\\
 =&1+qbM_2\epsilon^2+\epsilon^4\Bigg[
 q(q-1)\|\psi\|_2^2+q^2\sum_{\alpha\in\mathcal I}\vartheta_\alpha\|\psi_\alpha\|_2^2
 +q\sum_{\alpha\in\mathcal I}c_\alpha\langle P_\alpha\phi^2,\psi_\alpha\rangle\nonumber\\
 &\qquad+\frac{q(q-1)(q-2)(q-3)}{12}M_4
 +\frac{q^2(q-1)(q-2)}3\vartheta_2M_4
 +\frac{q^2(q-1)^2}{4}J_2\Bigg]+o(\epsilon^4),
\end{align}
where
\begin{equation}\label{dc'-def}
 c_\alpha:=(q-1)\big(q-2+2q\vartheta_2+q\vartheta_\alpha\big).
\end{equation}
Applying the outer $1/q$ power to \eqref{I-fourth} yields
\begin{align}\label{outer-fourth}
 &{\mathcal B_\mu((1+r_\epsilon)^q,(1+r_\epsilon)^q)}^{\frac{1}{q}}\nonumber\\
 =&1+bM_2\epsilon^2+\epsilon^4\Bigg[
 (q-1)\|\psi\|_2^2+q\sum_{\alpha\in\mathcal I}\vartheta_\alpha\|\psi_\alpha\|_2^2
 +\sum_{\alpha\in\mathcal I}c_\alpha\langle P_\alpha\phi^2,\psi_\alpha\rangle\nonumber\\
 &\qquad+\frac{(q-1)(q-2)(q-3)}{12}M_4
 +\frac{q(q-1)(q-2)}3\vartheta_2M_4
 +\frac{q(q-1)^2}{4}J_2
 -\frac{q-1}{2}b^2M_2^2\Bigg]+o(\epsilon^4).
\end{align}
On the other hand,
\begin{equation}\label{energy-fourth}
 \langle1+r_\epsilon,\mathscr A(1+r_\epsilon)\rangle
 =1+\nu_2M_2\epsilon^2+\epsilon^4\sum_{\alpha\in\mathcal I}\nu_\alpha\|\psi_\alpha\|_2^2.
\end{equation}
\begingroup
Combining \eqref{outer-fourth} with \eqref{energy-fourth} and then subtracting $\kappa_{\rm spec}$ times \eqref{exact-distance-fourth}, we obtain
\begin{equation*}
 \mathscr D_\mu(1+r_\epsilon)-\kappa_{\rm spec}\operatorname{dist}_{\mathscr A}(1+r_\epsilon,\mathfrak M_{\mathbb S})^2
 =\epsilon^4\mathscr C_\mu(\psi)+o(\epsilon^4),
\end{equation*}
where
\begin{equation*}
 \mathscr C_\mu(\psi)=\mathscr C_0+
 \sum_{\alpha\in\mathcal I}\left[d_\alpha\|\psi_\alpha\|_2^2-c_\alpha\langle P_\alpha\phi^2,\psi_\alpha\rangle\right].
\end{equation*}
Indeed, the coefficient of $\|\psi_\alpha\|_2^2$ is
\[
(1-\kappa_{\rm spec})\nu_\alpha-(q-1)-q\vartheta_\alpha
=\frac b{\nu_2}\nu_\alpha-(q-1)-q\vartheta_\alpha.
\]
Thus
\begin{equation}\label{dc-def}
 d_\alpha:=\frac b{\nu_2}\nu_\alpha-(q-1)-q\vartheta_\alpha
 =\nu_\alpha(\kappa_\alpha-\kappa_{\rm spec})>0,
\end{equation}
\endgroup
and
\begin{align}\label{C0-def}
 \mathscr C_0={}&-\frac{(q-1)(q-2)(q-3)}{12}M_4
 -\frac{q(q-1)(q-2)}3\vartheta_2M_4\nonumber\\
 &-\frac{q(q-1)^2}{4}\left(M_2^2+\sum_{\alpha\in\mathcal I}\vartheta_\alpha m_\alpha\right)
 +\frac{q-1}{2}b^2M_2^2.
\end{align}
\begingroup
Since $d_\alpha>0$, the fourth-order coefficient is minimized by completing the square:
\[
\mathscr C_\mu(\psi)
=\mathscr C_0-\sum_{\alpha\in\mathcal I}\frac{c_\alpha^2}{4d_\alpha}m_\alpha
+\sum_{\alpha\in\mathcal I}d_\alpha
\left\|\psi_\alpha-\frac{c_\alpha}{2d_\alpha}P_\alpha\phi^2\right\|_2^2.
\]
Hence the unique optimal correction is
\[
 \psi_*:=\sum_{\alpha\in\mathcal I}\frac{c_\alpha}{2d_\alpha}P_\alpha\phi^2,
\]
and
\begin{equation}\label{Gamma-def}
 \Gamma_{n,\mu}:=\mathscr C_\mu(\psi_*)
 =\mathscr C_0-\sum_{\alpha\in\mathcal I}\frac{c_\alpha^2}{4d_\alpha}m_\alpha.
\end{equation}
\endgroup
Consequently,
\begin{equation*}
 \frac{\mathscr D_\mu(1+r_\epsilon)}{\operatorname{dist}_{\mathscr A}(1+r_\epsilon,\mathfrak M_{\mathbb S})^2}
 =\kappa_{\rm spec}+\frac{\Gamma_{n,\mu}}{\nu_2M_2}\epsilon^2+o(\epsilon^2).
\end{equation*}
If $\Gamma_{n,\mu}<0$, then $\nu_2M_2>0$ and the last expansion is strictly smaller than $\kappa_{\rm spec}$ for all sufficiently small nonzero $\epsilon$. Since $\phi$ and $\psi_*$ are smooth on the sphere, $1+r_\epsilon>0$ for such $\epsilon$. Undoing the Cayley transform and the amplitude normalization multiplies the numerator and the squared distance by the same positive factor. Therefore
\[
H_{NS}<H_{NS}^{\rm spec}.
\]

\end{proof}

\endgroup

\subsection{Two-bubble upper bound}
We next prove \eqref{upper2}. The construction is inspired by \cite{K25}, but the decisive expansion here is the HLS-type cross interaction with kernel $|\eta^{-1}\xi|^{-\mu}$ under Heisenberg translations and dilations. We begin with the required notation. We denote the $\|\cdot\|_*$-normalized bubble on the Heisenberg group by

\begin{equation*}
 B(\xi)=\frac{C_{Q,\mu}}{[(1+|z|^{2})^{2}+t^{2}]^{\frac{Q-2}{4}}},
\end{equation*}
with $C_{Q,\mu}>0$ chosen such that
\begin{equation*}
\|B\|_*= \left(\int_{\mathbb{H}^{n}}\int_{\mathbb{H}^{n}}\frac{|B(\xi)|^{Q^{\ast}_{\mu}} |B(\eta)|
^{Q^{\ast}_{\mu}}}{|\eta^{-1}\xi|^{\mu}}{d}\xi{d}\eta\right)^{\frac{1}{2\cdot Q^{\ast}_{\mu}}}=1.
\end{equation*}
Denote also $B_{\lambda,\xi_0}(\xi)=\mathfrak{g}_{\lambda,\xi_0}B(\xi)$ for $\lambda>0$ and $\xi_0\in \mathbb{H}^n$, which satisfies
\begin{equation*}
 \|B_{\lambda,\xi_0}\|_*=1,\quad \|\nabla_\mathbb{H}B_{\lambda,\xi_0}\|_{{L}^{2}(\mathbb{H}^n)}^2=S_{HL}(Q,\mu),
\end{equation*}
\begin{equation}\label{ne1}
 -\Delta_{\mathbb{H}} B_{\lambda,\xi_0}=S_{HL}(Q,\mu)\left(\int_{\mathbb{H}^{n}}\frac{B_{\lambda,\xi_0}
^{Q^{\ast}_{\mu}}(\eta)}{|\eta^{-1}\xi|^{\mu}}{d}\eta\right)B_{\lambda,\xi_0}^{Q^{\ast}_{\mu}-1},
\end{equation}
and
\begin{equation*}
 \|B_{\lambda,\xi_0}\|^{Q^{\ast}}_{L^{Q^{\ast}}(\mathbb{H}^{n})}=C(Q,\mu)^{-\frac{Q^{\ast}}{2\cdot Q^{\ast}_{\mu}}},
\end{equation*}
\begin{equation}\label{ne2}
 -\Delta_{\mathbb{H}} B_{\lambda,\xi_0}=S_{HL}(Q,\mu)C(Q,\mu)^{\frac{Q^{\ast}}{2\cdot Q^{\ast}_{\mu}}}B_{\lambda,\xi_0}^{Q^{\ast}-1}.
\end{equation}
Let
\begin{equation*}
 \mathfrak{M}_1=\{B_{\lambda,\xi_0}:\lambda>0,\xi_0\in \mathbb{H}^n\}\subset \mathfrak{M}
\end{equation*}
be the submanifold of $\mathfrak{M}$ consisting of $\|\cdot\|_*$-normalized bubbles.
For $u\in S^{1,2}(\mathbb{H}^{n})$, define
\begin{equation*}
 \mathbf{m}(u)=\sup\limits_{v\in \mathfrak{M}_1}\Big(u,v^{Q^{\ast}-1}\Big)^2,
\end{equation*}
where
\begin{equation*}
 \Big(u,v^{Q^{\ast}-1}\Big)=\int_{\mathbb{H}^{n}} u v^{Q^{\ast}-1}d\xi
\end{equation*}
denotes the pairing between $L^{Q^{\ast}}(\mathbb{H}^{n})$ and its dual $(L^{Q^{\ast}}(\mathbb{H}^{n}))'$.
{The next lemma gives a convenient reformulation of $\mathrm{dist}(u,\mathfrak M)$ in terms of the optimization problem $\mathbf m(u)$; see also \cite[Lemma 2.2]{K25}.}
\begin{lemma}\label{l4.1}
{For $u\in S^{1,2}(\mathbb H^n)$, we have}
\begin{equation}\label{eq:l4.1}
 \mathrm{dist}(u,\mathfrak{M})^2=\|\nabla_\mathbb{H} u\|_{L^2(\mathbb{H}^n)}^2-S_{HL}(Q,\mu)C(Q,\mu)^{\frac{Q^{\ast}}{Q^{\ast}_{\mu}}}\mathbf{m}(u).
\end{equation}
{Moreover, $\mathrm{dist}(u,\mathfrak{M})$ is attained. The function
$S_{HL}(Q,\mu)C(Q,\mu)^{\frac{Q^{\ast}}{2\cdot Q^{\ast}_{\mu}}}(u,v^{Q^{\ast}-1})v
$
realizes this distance if and only if $v\in\mathfrak M_1$ maximizes $\mathbf m(u)$. In addition, $\mathbf m(u)>0$ for every nonzero $u\in S^{1,2}(\mathbb H^n)$.}
\end{lemma}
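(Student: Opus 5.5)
The identity follows by first optimizing the amplitude $c$ for a fixed profile and then optimizing over the profile $v\in\mathfrak M_1$. The only non-routine parts are the compactness needed for attainment and the positivity of $\mathbf m(u)$.

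\textbf{Step 1: reduction to a supremum over $\mathfrak M_1$.} Every element of $\mathfrak M$ can be written as $c\,v$ with $c\in\mathbb R$ and $v\in\mathfrak M_1$, and $c=0$ gives the zero element. Hence
\[
\mathrm{dist}(u,\mathfrak M)^2=\inf_{v\in\mathfrak M_1}\inf_{c\in\mathbb R}\|\nabla_{\mathbb H}(u-cv)\|_{L^2}^2 .
\]
For fixed $v$ the inner problem is a one-dimensional quadratic minimization. Its minimizer is $c_v=\langle u,v\rangle_{S^{1,2}}/\|\nabla_{\mathbb H}v\|_{L^2}^2$, and its minimum value is $\|\nabla_{\mathbb H}u\|_{L^2}^2-\langle u,v\rangle_{S^{1,2}}^2/\|\nabla_{\mathbb H}v\|_{L^2}^2$.

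\textbf{Step 2: rewriting the inner product.} Test \eqref{ne2} against $u$ to get
\[
\langle u,v\rangle_{S^{1,2}}=S_{HL}(Q,\mu)\,C(Q,\mu)^{\frac{Q^{\ast}}{2 Q^{\ast}_{\mu}}}\,(u,v^{Q^{\ast}-1}).
\]
Use also $\|\nabla_{\mathbb H}v\|_{L^2}^2=S_{HL}(Q,\mu)$. Then
\[
\frac{\langle u,v\rangle_{S^{1,2}}^2}{\|\nabla_{\mathbb H}v\|_{L^2}^2}=S_{HL}(Q,\mu)\,C(Q,\mu)^{\frac{Q^{\ast}}{Q^{\ast}_{\mu}}}\,(u,v^{Q^{\ast}-1})^2 .
\]
Taking the supremum over $v\in\mathfrak M_1$ gives \eqref{eq:l4.1}. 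It also shows that the closest points of $\mathfrak M$ are exactly the functions $c_v v$ with $v$ a maximizer of $\mathbf m(u)$. Here $c_v v$ is the explicit multiple of $(u,v^{Q^{\ast}-1})\,v$ obtained by inserting the identity above into $c_v$. I will recheck the normalizing constant in the statement against this computation.

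\textbf{Step 3: positivity of $\mathbf m(u)$.} Suppose $(u,B_{\lambda,\xi_0}^{Q^{\ast}-1})=0$ for all $\lambda>0$ and $\xi_0\in\mathbb H^n$. After the change of variables $\xi\mapsto\xi_0\delta_{1/\lambda}(\cdot)$, this says that $\lambda^{-\frac{Q-2}{2}}$ times $(u\ast\rho_\lambda)(\xi_0)$ vanishes identically. Here $\rho_\lambda$ is the group-dilated approximate identity built from the positive integrable function $B^{Q^{\ast}-1}$, with total mass a fixed positive constant. Since $u\in L^{Q^{\ast}}$, the convolutions $u\ast\rho_\lambda$ converge to a positive multiple of $u$ in $L^{Q^{\ast}}$ as $\lambda\to\infty$. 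Hence $u=0$.

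\textbf{Step 4: attainment (main obstacle).} The map $(\lambda,\xi_0)\mapsto(u,B_{\lambda,\xi_0}^{Q^{\ast}-1})$ is continuous on $(0,\infty)\times\mathbb H^n$, so a maximizer exists provided maximizing sequences cannot escape to infinity in the parameters. Take a sequence with $\lambda_k\to0$, $\lambda_k\to\infty$, or $|\xi_k|\to\infty$ at bounded $\lambda_k$. The functions $B_{\lambda_k,\xi_k}^{Q^{\ast}-1}$ are bounded in $L^{(Q^{\ast})'}$, since their norm is invariant. They converge weakly to $0$ there: they vanish, concentrate, or translate off to infinity. To check this, test against $C_c$ functions, which are dense, and use the explicit decay of $B$. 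Therefore the pairing with $u$ tends to $0$ along such sequences. Because $\mathbf m(u)>0$ by Step 3, any maximizing sequence stays in a compact subset of the parameter space. A subsequence then converges to a maximizer $v$. By Steps 1 and 2, $c_v v$ attains $\mathrm{dist}(u,\mathfrak M)$.
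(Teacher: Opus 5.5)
Your proposal is correct and follows the paper's route. The paper obtains the identity by completing the square in $c$ for fixed $v\in\mathfrak M_1$, exactly as in your Steps 1--2, and defers attainment and $\mathbf m(u)>0$ to \cite[Lemma 2.2]{K25}, whose approximate-identity and weak-vanishing arguments your Steps 3--4 reproduce correctly. On the constant you wanted to recheck, your value $c_v=C(Q,\mu)^{\frac{Q^{\ast}}{2Q^{\ast}_{\mu}}}(u,v^{Q^{\ast}-1})$ is the right one: it is the center of the paper's own completed square $S_{HL}(Q,\mu)\bigl(c-C(Q,\mu)^{\frac{Q^{\ast}}{2Q^{\ast}_{\mu}}}(u,v^{Q^{\ast}-1})\bigr)^2$. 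So the extra factor $S_{HL}(Q,\mu)$ in the statement's formula for the realizer is a misprint; with it, that function would fail to realize the distance for every $u\neq0$ unless $S_{HL}(Q,\mu)=1$.
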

\begin{proof}
Recall that
\begin{equation*}
 \|\nabla_\mathbb{H}v\|_{{L}^{2}(\mathbb{H}^n)}^2=S_{HL}(Q,\mu),\quad -\Delta_{\mathbb{H}} v=S_{HL}(Q,\mu)C(Q,\mu)^{\frac{Q^{\ast}}{2\cdot Q^{\ast}_{\mu}}}v^{Q^{\ast}-1}.
\end{equation*}
For any $c\in \mathbb{R}$ and $v\in \mathfrak{M}_1$, we have
\begin{align*}
 \|\nabla_\mathbb{H} (u-cv)\|_{L^2(\mathbb{H}^n)}^2=&\|\nabla_\mathbb{H} u\|_{L^2(\mathbb{H}^n)}^2+
 c^2S_{HL}(Q,\mu)-2cS_{HL}(Q,\mu)C(Q,\mu)^{\frac{Q^{\ast}}{2\cdot Q^{\ast}_{\mu}}}\Big(u,v^{Q^{\ast}-1}\Big)\\
 =&\|\nabla_\mathbb{H} u\|_{L^2(\mathbb{H}^n)}^2-S_{HL}(Q,\mu)C(Q,\mu)^{\frac{Q^{\ast}}{ Q^{\ast}_{\mu}}}\Big(u,v^{Q^{\ast}-1}\Big)^2\\
 &+S_{HL}(Q,\mu)\left(c-C(Q,\mu)^{\frac{Q^{\ast}}{2\cdot Q^{\ast}_{\mu}}}\Big(u,v^{Q^{\ast}-1}\Big)\right)^2.
\end{align*}
Hence, we deduce that
\begin{equation*}
 \mathrm{dist}(u,\mathfrak{M})^2=\inf\limits_{v\in \mathfrak{M}_1}\inf\limits_{c\in \mathbb{R}}
 \|\nabla_\mathbb{H} (u-cv)\|_{L^2(\mathbb{H}^n)}^2=
 \|\nabla_\mathbb{H} u\|_{L^2(\mathbb{H}^n)}^2-S_{HL}(Q,\mu)C(Q,\mu)^{\frac{Q^{\ast}}{Q^{\ast}_{\mu}}}\sup\limits_{v\in \mathfrak{M}_1}\Big(u,v^{Q^{\ast}-1}\Big)^2.
\end{equation*}
The rest of the proof is similar to that of \cite[Lemma 2.2]{K25}, so we omit it.
\end{proof}

Notice that from the definition of $\mathbf{m}$, we have
\begin{equation}\label{e4.7}
 {\mathbf{m}(u)\leq C(Q,\mu)^{-\frac{Q^{\ast}}{Q^{\ast}_{\mu}}}\|u\|_*^2,\qquad \forall \, u\in S^{1,2}(\mathbb{H}^{n}),}
\end{equation}
and there is equality in \eqref{e4.7} if and only if $u\in \mathfrak{M}$. Indeed, for any $u\in S^{1,2}(\mathbb{H}^{n})$ and $v\in \mathfrak{M}_1$, by \eqref{ne1} and \eqref{ne2},
we have
\begin{equation*}
 \int_{\mathbb{H}^{n}} u v^{Q^{\ast}-1}d\xi=C(Q,\mu)^{-\frac{Q^{\ast}}{2\cdot Q^{\ast}_{\mu}}}
 \int_{\mathbb{H}^{n}}\int_{\mathbb{H}^{n}}
 \frac{v^{Q^{\ast}_{\mu}}(\xi)v^{Q^{\ast}_{\mu}-1}(\eta)u(\eta)}{|\eta^{-1}\xi|^{\mu}}d\xi{d}\eta.
\end{equation*}
Then, using
 the Cauchy-Schwarz inequality and the H\"{o}lder inequality, we derive that
\begin{align*}
 \left(\int_{\mathbb{H}^{n}} u v^{Q^{\ast}-1}d\xi\right)^2\leq& C(Q,\mu)^{-\frac{Q^{\ast}}{Q^{\ast}_{\mu}}}
 \left(\int_{\mathbb{H}^{n}}\int_{\mathbb{H}^{n}}
 \frac{v^{Q^{\ast}_{\mu}}v^{Q^{\ast}_{\mu}}}{|\eta^{-1}\xi|^{\mu}}d\xi{d}\eta\right)\left(\int_{\mathbb{H}^{n}}\int_{\mathbb{H}^{n}}
 \frac{v^{Q^{\ast}_{\mu}-1}uv^{Q^{\ast}_{\mu}-1}u}{|\eta^{-1}\xi|^{\mu}}d\xi{d}\eta\right)
 \\ \leq& C(Q,\mu)^{-\frac{Q^{\ast}}{Q^{\ast}_{\mu}}} \|v\|_*^{2\cdot Q^{\ast}_{\mu}}\|v\|_*^{2( Q^{\ast}_{\mu}-1)}\|u\|_*^{2}
 =C(Q,\mu)^{-\frac{Q^{\ast}}{Q^{\ast}_{\mu}}}\|u\|_*^{2},
\end{align*}
and equality holds if and only if $u=cv\in \mathfrak{M}$ for some $c\in \mathbb{R}$.

{We now prove the two-bubble upper bound for $H_{NS}$ by considering a sequence of test}
functions of the form
\begin{equation*}
 u_\lambda(\xi)=B(\xi)+B_\lambda(\xi),
\end{equation*}
where $B_\lambda(\xi)=B_{\lambda,\mathbf{0}}(\xi)$ and $\lambda\rightarrow 0^+$. {The following proposition records the asymptotic estimates needed for $\mathcal{L}(u_\lambda)$.}
\begin{proposition}\label{p4.1}
Let $c_0=B(\mathbf{0})\int_{\mathbb{H}^{n}} B^{Q^{\ast}-1}d\xi$.
As $\lambda\rightarrow0$, the following holds:

(i) $\int_{\mathbb{H}^{n}}|\nabla_{\mathbb{H}} u_\lambda|^{2}{d}\xi=2S_{HL}(Q,\mu)+2S_{HL}(Q,\mu)
c_0C(Q,\mu)^{\frac{Q^{\ast}}{2\cdot Q^{\ast}_{\mu}}}\lambda^{\frac{Q-2}{2}}+o\big(\lambda^{\frac{Q-2}{2}}\big)$;

(ii) $\mathrm{dist}(u_\lambda,\mathfrak{M})^2=S_{HL}(Q,\mu)+o\big(\lambda^{\frac{Q-2}{2}}\big)$;

{(iii) $\Big(\int_{\mathbb{H}^{n}}\int_{\mathbb{H}^{n}}\frac{|u_\lambda(\xi)|^{Q^{\ast}_{\mu}}|u_\lambda(\eta)|
^{Q^{\ast}_{\mu}}}{|\eta^{-1}\xi|^{\mu}}{d}\xi{d}\eta\Big)^{\frac{1}{Q^{\ast}_{\mu}}}\geq2^{\frac{1}{Q^{\ast}_{\mu}}}+
2^{\frac{1}{Q^{\ast}_{\mu}}+1}c_0C(Q,\mu)^{\frac{Q^{\ast}}{2\cdot Q^{\ast}_{\mu}}}\lambda^{\frac{Q-2}{2}}+o\big(\lambda^{\frac{Q-2}{2}}\big)$.}
\end{proposition}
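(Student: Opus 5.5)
We treat the three items separately; each rests on the equations \eqref{ne1}--\eqref{ne2} for $B_{\lambda,\xi_0}$ and on the pointwise asymptotics of $B_\lambda$. Throughout we use two facts. First, $B_\lambda(\xi)=\lambda^{\frac{Q-2}{2}}B(\delta_\lambda\xi)$ concentrates at the point at infinity as $\lambda\to0$ (it spreads out), while converging locally uniformly to $\lambda^{\frac{Q-2}{2}}B(\mathbf 0)$. Second, since $B^{Q^*-1}\in L^1$ (decay $|\xi|^{-(Q+2)}$), dominated convergence gives
\[
\int_{\mathbb H^n}B^{Q^*-1}B_\lambda\,d\xi=\lambda^{\frac{Q-2}{2}}B(\mathbf 0)\int_{\mathbb H^n}B^{Q^*-1}d\xi+o\big(\lambda^{\frac{Q-2}{2}}\big)=c_0\lambda^{\frac{Q-2}{2}}+o\big(\lambda^{\frac{Q-2}{2}}\big).
\]
By the change of variables $\xi\mapsto\delta_{\lambda^{-1}}\xi$, the same value is obtained for $\int B_\lambda^{Q^*-1}B$.

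\textbf{Item (i).} Expanding the square gives $2S_{HL}(Q,\mu)+2\int\nabla_{\mathbb H}B\cdot\nabla_{\mathbb H}B_\lambda$. Integrating by parts and using \eqref{ne2}, the cross term equals
\[
S_{HL}(Q,\mu)C(Q,\mu)^{\frac{Q^*}{2Q^*_\mu}}\int B^{Q^*-1}B_\lambda,
\]
and the asymptotics above give (i).

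\textbf{Item (ii).} We apply Lemma~\ref{l4.1}: $\mathrm{dist}^2=\|\nabla_{\mathbb H}u_\lambda\|^2-S_{HL}C^{Q^*/Q^*_\mu}\mathbf m(u_\lambda)$, so we need $\mathbf m(u_\lambda)$ to order $o(\lambda^{\frac{Q-2}{2}})$. Testing with $v=B$ gives
\[
(u_\lambda,B^{Q^*-1})=C^{-\frac{Q^*}{2Q^*_\mu}}+c_0\lambda^{\frac{Q-2}{2}}+o.
\]
Squaring and multiplying by $S_{HL}C^{Q^*/Q^*_\mu}$ yields exactly $S_{HL}+2S_{HL}c_0C^{\frac{Q^*}{2Q^*_\mu}}\lambda^{\frac{Q-2}{2}}+o$, which cancels the excess in (i) and leaves $\mathrm{dist}^2\le S_{HL}+o$. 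For the reverse inequality we need that the supremum in $\mathbf m(u_\lambda)$ is not larger to this order. Let $v=B_{s,\eta}$ be a maximizer. By symmetry $\xi\mapsto\delta_{\lambda^{-1}}$ combined with the inversion exchanging $B$ and $B_\lambda$, we may assume $v$ is closer to $B$ than to $B_\lambda$. Then $(B_\lambda,v^{Q^*-1})=O(\lambda^{\frac{Q-2}{2}})$, uniformly, and to leading order it still equals $c_0\lambda^{\frac{Q-2}{2}}$ plus a term that is small when $v$ is near $B$. Meanwhile $(B,v^{Q^*-1})\le C^{-\frac{Q^*}{2Q^*_\mu}}$, with a quadratic defect in the distance of $(s,\eta)$ from $(1,\mathbf 0)$ by nondegeneracy. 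Maximizing, the optimum satisfies $(s,\eta)\to(1,\mathbf 0)$ with a deviation of size $O(\lambda^{\frac{Q-2}{2}})$, so the gain is $O(\lambda^{Q-2})=o(\lambda^{\frac{Q-2}{2}})$. This is the step where uniformity must be handled carefully; we expect it to follow the argument of \cite{K25}.

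\textbf{Item (iii).} This is a lower bound, so we use pointwise convexity. For $a,b\ge0$ and $p=Q^*_\mu\ge1$, we have $(a+b)^p\ge a^p+b^p+p\,a^{p-1}b+p\,b^{p-1}a$ up to an error that is nonnegative when $p\ge2$. For $p<2$ we instead use $(a+b)^p\ge a^p+b^p+p\,a^{p-1}b\,\mathbf 1_{a\ge b}+p\,b^{p-1}a\,\mathbf 1_{b>a}$, whose discarded pieces are $o(\lambda^{\frac{Q-2}{2}})$ by the separation of regions. Inserting this into the double integral, the diagonal terms give $\|B\|_*^{2p}+\|B_\lambda\|_*^{2p}=2$. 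The terms linear in the other bubble give
\[
2p\iint\frac{B^{p}(\xi)B^{p-1}(\eta)B_\lambda(\eta)}{|\eta^{-1}\xi|^\mu}
\]
and its mirror. By \eqref{ne1}--\eqref{ne2}, each equals $2p\,C^{\frac{Q^*}{2Q^*_\mu}}\int B^{Q^*-1}B_\lambda$, since $\int\frac{B^p}{|\eta^{-1}\xi|^\mu}B^{p-1}=C^{Q^*/(2Q^*_\mu)}B^{Q^*-1}$. The mixed products $B^pB_\lambda^p$ are nonnegative and are dropped. Altogether the integral is $\ge 2+4p\,c_0C^{\frac{Q^*}{2Q^*_\mu}}\lambda^{\frac{Q-2}{2}}+o$. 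Taking the $1/p$ power yields $2^{1/p}\big(1+2c_0C^{\frac{Q^*}{2Q^*_\mu}}\lambda^{\frac{Q-2}{2}}\big)+o$, which is (iii).

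The main obstacle is the matching lower bound in (ii), namely controlling $\mathbf m(u_\lambda)$ uniformly over all competitors $v$; the other two items are direct expansions.
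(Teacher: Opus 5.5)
Your items (i) and (ii) match the paper's level of detail: it too just invokes \cite[Proposition~3.1]{K25} for both. You correctly identify the upper bound on $\mathbf m(u_\lambda)$ as the real content of (ii). One small slip: $\int B_\lambda^{Q^\ast-1}B=\int B^{Q^\ast-1}B_\lambda$ holds because, by \eqref{ne2}, both are multiples of $\int\nabla_{\mathbb H}B\cdot\nabla_{\mathbb H}B_\lambda$. The dilation $\xi\mapsto\delta_{\lambda^{-1}}\xi$ instead turns it into $\int B^{Q^\ast-1}B_{1/\lambda}$.

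The genuine gap is in (iii): the pointwise inequalities you rely on are false in almost the whole parameter range. The four-term bound $(a+b)^p\ge a^p+b^p+pa^{p-1}b+pb^{p-1}a$ fails at $a=b$ for every $p<3$, since it reads $2^p\ge 2+2p$ (e.g.\ $4\ge6$ at $p=2$). So it is not valid for $p\ge2$, as you claim. The indicator version, which you reserve for $p<2$, is exact when $p\ge2$ but false when $p<2$: for $b\ll a$ one has $(a+b)^p-a^p-pa^{p-1}b\approx\frac{p(p-1)}{2}a^{p-2}b^2\ll b^p$. Now $q=Q^\ast_\mu=\frac{4n+4-\mu}{2n}$ satisfies $q\ge2$ iff $\mu\le4$, and $q<3$ whenever $n\ge2$. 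Hence your argument as written is correct only when $q\ge3$, i.e.\ $n=1$, $\mu\le2$; in particular it fails at $\mu=2$ for every $n\ge2$. You therefore never produce a function $G\le(B+B_\lambda)^q$ whose HLS energy is $2+4qA\lambda^{\frac{Q-2}{2}}+o(\lambda^{\frac{Q-2}{2}})$, with $A=c_0C(Q,\mu)^{\frac{Q^\ast}{2Q^\ast_\mu}}$. The phrase ``discarded pieces are $o(\lambda^{\frac{Q-2}{2}})$ by the separation of regions'' is exactly what needs proof.

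The bound that does hold for every $q>1$ is $(a+b)^q\ge a^q+qa^{q-1}b$ for $a\ge b$. Here $\{B_\lambda\le B\}$ is exactly $B(\mathbf 0,\lambda^{-1/2})$, because $(B/C_{Q,\mu})^{-\frac{4}{Q-2}}-(B_\lambda/C_{Q,\mu})^{-\frac{4}{Q-2}}=(1-\lambda^2)(|\xi|^4-\lambda^{-2})$. With this bound the diagonal terms become truncated integrals rather than $\|B\|_*^{2q}+\|B_\lambda\|_*^{2q}=2$, and the linear terms are truncated too, so quantitative truncation estimates cannot be avoided.

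The paper proceeds as follows:
\begin{enumerate}
\item It uses the Heisenberg inversion $I_{\lambda^{-1/2}}$ and the Kor\'anyi--Reimann formula to write the double integral as twice its part with $\eta\in B(\mathbf 0,\lambda^{-1/2})$.
\item It restricts $\xi$ to the same ball to get $\ge 2I+4qII$.
\item It proves $1-I=O(\lambda^{Q/2})$.
\item It bounds the cost of truncating the $\xi$-integral in $II$ by HLS, using $\|B^q\mathbf 1_{B^c(\mathbf 0,R)}\|_{L^{2Q/(2Q-\mu)}}=O(\lambda^{\frac{2Q-\mu}{4}})$ and treating $\mu<4$, $\mu=4$, $\mu>4$ separately for $\|B^{q-1}B_\lambda\mathbf 1_{B(\mathbf 0,R)}\|_{L^{2Q/(2Q-\mu)}}$. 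The case split is needed because $B^{q-1}\notin L^{2Q/(2Q-\mu)}$ once $\mu\ge4$.
\end{enumerate}

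Alternatively, you can keep your structure. Let $G'=B^q+B_\lambda^q+qB^{q-1}B_\lambda\mathbf 1_{B(\mathbf 0,R)}+qB_\lambda^{q-1}B\mathbf 1_{B^c(\mathbf 0,R)}$ and $E=B_\lambda^q\mathbf 1_{B(\mathbf 0,R)}+B^q\mathbf 1_{B^c(\mathbf 0,R)}$. The correct pointwise bound is then $(B+B_\lambda)^q\ge G'-E\ge0$, and $G'\le(1+q)(B^q+B_\lambda^q)$. The terms involving $E$ are $O(\lambda^{Q/2})$. This follows from $\int B^q(\eta)|\eta^{-1}\xi|^{-\mu}d\eta=C(Q,\mu)^{\frac{Q^\ast}{2Q^\ast_\mu}}B^{\frac{\mu}{Q-2}}(\xi)$, its analogue for $B_\lambda$, the bound $B_\lambda\le B(\mathbf 0)\lambda^{\frac{Q-2}{2}}$, and the explicit decay of $B$. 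Either way, this estimate has to be supplied.
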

\begin{proof}
By \eqref{ne2}, an argument similar to that in \cite[Proposition~3.1]{K25} gives $(i)$ and $(ii)$. Now, let us prove $(iii)$. Set
\begin{equation*}
 I_{\tau}(\xi)=I_{\tau}(z,t)=\left(\frac{\tau^2z}{-|z|^2+it},\frac{-\tau^4t}{|z|^4+t^2}\right)
\end{equation*}
be the Heisenberg inversion for some $\tau>0$. Using the Kor\'{a}nyi-Reimann formula; see \cite{P16},
\begin{equation*}
 d\big(I_{\tau}(\eta),I_{\tau}(\xi)\big)=\frac{\tau^2 d(\eta,\xi)}{|\eta||\xi|},
\end{equation*}
we obtain
\begin{align*}
 &\int_{B^c(\mathbf{0},\lambda^{-1/2})}\int_{\mathbb{H}^{n}}\frac{|u_\lambda(\xi)|^{Q^{\ast}_{\mu}}|u_\lambda(\eta)|
^{Q^{\ast}_{\mu}}}{|\eta^{-1}\xi|^{\mu}}{d}\xi{d}\eta\\
=&
\int_{B^c(\mathbf{0},\lambda^{-1/2})}\int_{\mathbb{H}^{n}}\frac{(B+B_\lambda)^{Q^{\ast}_{\mu}}(\xi)(B+B_\lambda)
^{Q^{\ast}_{\mu}}(\eta)}{|\eta^{-1}\xi|^{\mu}}{d}\xi{d}\eta\\
=&\int_{B(\mathbf{0},\lambda^{-1/2})}\int_{\mathbb{H}^{n}}\frac{(B+B_\lambda)^{Q^{\ast}_{\mu}}(I_{\lambda^{-1/2}}(\xi_1))(B+B_\lambda)
^{Q^{\ast}_{\mu}}(I_{\lambda^{-1/2}}(\xi_2))}{d\big(I_{\lambda^{-1/2}}(\xi_2),I_{\lambda^{-1/2}}(\xi_1)\big)^{\mu}}
\bigg(\frac{\lambda^{-1/2}}{|\xi_1|}\bigg)^{2Q}\bigg(\frac{\lambda^{-1/2}}{|\xi_2|}\bigg)^{2Q}{d}\xi_1{d}\xi_2\\
=&\int_{B(\mathbf{0},\lambda^{-1/2})}\int_{\mathbb{H}^{n}}\frac{(B+B_\lambda)^{Q^{\ast}_{\mu}}(I_{\lambda^{-1/2}}(\xi_1))(B+B_\lambda)
^{Q^{\ast}_{\mu}}(I_{\lambda^{-1/2}}(\xi_2))}{d(\xi_2,\xi_1)^{\mu}}
\bigg(\frac{\lambda^{-1/2}}{|\xi_1|}\bigg)^{2Q-\mu}\bigg(\frac{\lambda^{-1/2}}{|\xi_2|}\bigg)^{2Q-\mu}{d}\xi_1{d}\xi_2\\
=&\int_{B(\mathbf{0},\lambda^{-1/2})}\int_{\mathbb{H}^{n}}\frac{(B+B_\lambda)^{Q^{\ast}_{\mu}}(\xi_1)(B+B_\lambda)
^{Q^{\ast}_{\mu}}(\xi_2)}{|\xi_2^{-1}\xi_1|^{\mu}}{d}\xi_1{d}\xi_2\\
=&\int_{B(\mathbf{0},\lambda^{-1/2})}\int_{\mathbb{H}^{n}}\frac{|u_\lambda(\xi)|^{Q^{\ast}_{\mu}}|u_\lambda(\eta)|
^{Q^{\ast}_{\mu}}}{|\eta^{-1}\xi|^{\mu}}{d}\xi{d}\eta,
\end{align*}
where
\begin{equation*}
 \xi_1=(z_1,t_1),\quad \xi_2=(z_2,t_2),
\end{equation*}
such that
\begin{equation*}
 \xi= (z,t)=\left(\frac{\lambda^{-1}z_1}{-|z_1|^2+it_1},\frac{-\lambda^{-2}t_1}{|z_1|^4+t_1^2}\right),\quad
 \eta= (z',t')=\left(\frac{\lambda^{-1}z_2}{-|z_2|^2+it_2},\frac{-\lambda^{-2}t_2}{|z_2|^4+t_2^2}\right).
\end{equation*}
{Therefore,
\begin{align*}
 &\int_{\mathbb{H}^{n}}\int_{\mathbb{H}^{n}}\frac{|u_\lambda(\xi)|^{Q^{\ast}_{\mu}}|u_\lambda(\eta)|
 ^{Q^{\ast}_{\mu}}}{|\eta^{-1}\xi|^{\mu}}{d}\xi{d}\eta\\
 =&2\int_{B(\mathbf{0},\lambda^{-1/2})}\int_{\mathbb{H}^{n}}\frac{(B+B_\lambda)^{Q^{\ast}_{\mu}}(\xi)
 (B+B_\lambda)^{Q^{\ast}_{\mu}}(\eta)}{|\eta^{-1}\xi|^{\mu}}{d}\xi{d}\eta\\
 \geq&2\int_{B(\mathbf{0},\lambda^{-1/2})}\int_{B(\mathbf{0},\lambda^{-1/2})}\frac{(B+B_\lambda)^{Q^{\ast}_{\mu}}(\xi)
 (B+B_\lambda)^{Q^{\ast}_{\mu}}(\eta)}{|\eta^{-1}\xi|^{\mu}}{d}\xi{d}\eta.
\end{align*}
On $B(\mathbf{0},\lambda^{-1/2})$ we have $0<B_\lambda\leq B$. Since $q:=Q^{\ast}_{\mu}>1$, convexity gives $(1+t)^q\geq1+qt$ for $t\geq0$, and hence
\[
 (B+B_\lambda)^q\geq B^q+qB^{q-1}B_\lambda
 \qquad\text{on }B(\mathbf{0},\lambda^{-1/2}).
\]
Set
\begin{align*}
 I&:=\int_{B(\mathbf{0},\lambda^{-1/2})}\int_{B(\mathbf{0},\lambda^{-1/2})}
 \frac{B^q(\xi)B^q(\eta)}{|\eta^{-1}\xi|^\mu}{d}\xi{d}\eta,\\
 II&:=\int_{B(\mathbf{0},\lambda^{-1/2})}\int_{B(\mathbf{0},\lambda^{-1/2})}
 \frac{B^q(\xi)B^{q-1}(\eta)B_\lambda(\eta)}{|\eta^{-1}\xi|^\mu}{d}\xi{d}\eta.
\end{align*}
The remaining quadratic cross term is nonnegative, so
\begin{equation}\label{two-bubble-HLS-lower}
 \int_{\mathbb{H}^{n}}\int_{\mathbb{H}^{n}}\frac{|u_\lambda(\xi)|^q|u_\lambda(\eta)|^q}{|\eta^{-1}\xi|^\mu}{d}\xi{d}\eta
 \geq 2I+4qII.
\end{equation}
We next justify the two truncated interaction estimates. Put $R=\lambda^{-1/2}$ and
$p=2Q/(2Q-\mu)$. Since $\|B\|_*=1$, \eqref{ne1}-\eqref{ne2} imply
\[
 0\leq1-I\leq C\int_{B^c(\mathbf0,R)}B^{Q^{\ast}}d\xi
 =O\left(\lambda^{\frac{Q}{2}}\right)
 =o\!\left(\lambda^{\frac{Q-2}{2}}\right).
\]
For $II$, first extend the $\xi$-integration to $\mathbb H^n$. By \eqref{ne1}-\eqref{ne2}, the resulting integral equals
\[
 C(Q,\mu)^{\frac{Q^{\ast}}{2Q^{\ast}_{\mu}}}
 \int_{B(\mathbf0,R)}B^{Q^{\ast}-1}B_\lambda\,d\eta.
\]
The standard one-bubble overlap estimate (see, e.g., \cite[Proposition~3.1]{K25}) gives
\[
 \int_{\mathbb H^n}B^{Q^{\ast}-1}B_\lambda\,d\eta
 =c_0\lambda^{\frac{Q-2}{2}}+o\!\left(\lambda^{\frac{Q-2}{2}}\right),
\]
and the part over $B^c(\mathbf0,R)$ is $O(\lambda^{Q/2})$. It remains to control the error caused by restricting the $\xi$-integration to $B(\mathbf0,R)$. The HLS inequality gives
\begin{align*}
 \int_{B(\mathbf0,R)}\int_{B^c(\mathbf0,R)}
 \frac{B^q(\xi)B^{q-1}(\eta)B_\lambda(\eta)}{|\eta^{-1}\xi|^\mu}
 \,d\xi d\eta\leq C\|B^q\mathbf1_{B^c(\mathbf0,R)}\|_{L^p}
 \|B^{q-1}B_\lambda\mathbf1_{B(\mathbf0,R)}\|_{L^p}
 =o\!\left(\lambda^{\frac{Q-2}{2}}\right),
\end{align*}
where, by the explicit decay of $B$ and the fact that $B_\lambda\leq B$ on $B(\mathbf0,R)$,
\[
 \|B^q\mathbf1_{B^c(\mathbf0,R)}\|_{L^p}
 =O\!\left(\lambda^{\frac{2Q-\mu}{4}}\right)
\]
and
\[
 \|B^{q-1}B_\lambda\mathbf1_{B(\mathbf0,R)}\|_{L^p}
 \leq C\begin{cases}
 \lambda^{\frac{Q-2}{2}},&0<\mu<4,\\
 \lambda^{\frac{Q-2}{2}}|\log\lambda|^{\frac{1}{p}},&\mu=4,\\
 \lambda^{\frac{2Q-\mu}{4}},&4<\mu<Q.
 \end{cases}
\]
In the last case, $(2Q-\mu)/2>(Q-2)/2$ because $\mu<Q<Q+2$. Consequently,
\[
 I=1+o\!\left(\lambda^{\frac{Q-2}{2}}\right),\qquad
 II=C(Q,\mu)^{\frac{Q^{\ast}}{2Q^{\ast}_{\mu}}}c_0\lambda^{\frac{Q-2}{2}}
 +o\!\left(\lambda^{\frac{Q-2}{2}}\right).
\]
Consequently, \eqref{two-bubble-HLS-lower} yields
\[
 \int_{\mathbb{H}^{n}}\int_{\mathbb{H}^{n}}\frac{|u_\lambda(\xi)|^q|u_\lambda(\eta)|^q}{|\eta^{-1}\xi|^\mu}{d}\xi{d}\eta
 \geq 2+4q c_0C(Q,\mu)^{\frac{Q^{\ast}}{2Q^{\ast}_{\mu}}}\lambda^{\frac{Q-2}{2}}
 +o\!\left(\lambda^{\frac{Q-2}{2}}\right).
\]
Taking the $1/q$ power gives
\[
 \left(\int_{\mathbb{H}^{n}}\int_{\mathbb{H}^{n}}\frac{|u_\lambda(\xi)|^q|u_\lambda(\eta)|^q}{|\eta^{-1}\xi|^\mu}{d}\xi{d}\eta\right)^{\frac{1}{q}}
 \geq 2^{\frac{1}{q}}+2^{1+\frac{1}{q}}c_0C(Q,\mu)^{\frac{Q^{\ast}}{2Q^{\ast}_{\mu}}}\lambda^{\frac{Q-2}{2}}
 +o\!\left(\lambda^{\frac{Q-2}{2}}\right),
\]
which proves $(iii)$.}
\end{proof}

{We now prove the two-bubble bound, which is independent of the preceding local perturbation argument.}
\begin{proof}
[Proof of Theorem~\ref{main thm0}-\eqref{upper2}]
{By Proposition~\ref{p4.1}, as $\lambda\to0^+$, we have $u_\lambda\in S^{1,2}(\mathbb H^n)\setminus\mathfrak M$. Put
\[
 a:=\frac{Q-2}{2},\qquad
 A:=c_0C(Q,\mu)^{\frac{Q^{\ast}}{2Q^{\ast}_{\mu}}}>0,
 \qquad q:=Q^{\ast}_{\mu}.
\]
Using parts $(i)$-$(iii)$ of Proposition~\ref{p4.1}, and noting that
$\mathrm{dist}(u_\lambda,\mathfrak M)^2=S_{HL}(Q,\mu)+o(\lambda^a)$, we obtain
\begin{align*}
 \mathcal L(u_\lambda)
 &\leq
 \frac{S_{HL}(Q,\mu)\big[2+2A\lambda^a-2^{1/q}-2^{1/q+1}A\lambda^a\big]
       +o(\lambda^a)}{S_{HL}(Q,\mu)+o(\lambda^a)}\\
 &=2-2^{1/q}-2\big(2^{1/q}-1\big)A\lambda^a+o(\lambda^a).
\end{align*}
Since $q>1$ and $A>0$, the coefficient of $\lambda^a$ is strictly negative. Hence, for all sufficiently small $\lambda>0$,
\[
 H_{NS}\leq\mathcal L(u_\lambda)<2-2^{1/q}
 =2-2^{\frac{1}{Q^{\ast}_{\mu}}}.
\]
This proves \eqref{upper2}.}
\end{proof}

The two strict estimates now play different compactness roles: the spectral estimate separates a minimizing sequence from $\mathfrak{M}$, while the two-bubble estimate prevents an energy split into two nontrivial profiles.

\section{Existence of minimizers: Proof of Theorem~\ref{main thm1}}\label{sec5}
We prove Theorem~\ref{main thm1} by adapting the concentration-compactness scheme of \cite{K25} to the nonlocal Heisenberg quotient. The adaptation uses the nonlocal Br\'{e}zis--Lieb splitting and the strict thresholds established in Section~\ref{sec4}.
Let $\{u_k\}\subset S^{1,2}(\mathbb{H}^{n})\setminus \mathfrak{M}$ be a $\|\cdot\|_*$-normalized minimizing sequence for $H_{NS}$, that is,
 \begin{equation}\label{sequen}
 \mathcal {L}(u_k)=H_{NS}+o_k(1)\quad \mathrm{as}~k\rightarrow+\infty,\quad \|u_k\|_*=1.
 \end{equation}
{By \eqref{eq:l4.1}, we obtain}
\begin{align*}
 \|\nabla_\mathbb{H} u_k\|_{L^2(\mathbb{H}^n)}^2=&(H_{NS}+o_k(1))\mathrm{dist}(u_k,\mathfrak{M})^2+S_{HL}(Q,\mu) \left(\int_{\mathbb{H}^{n}}\int_{\mathbb{H}^{n}}\frac{|u_k(\xi)|^{Q^{\ast}_{\mu}}|u_k(\eta)|
^{Q^{\ast}_{\mu}}}{|\eta^{-1}\xi|^{\mu}}{d}\xi{d}\eta\right)^{\frac{1}{Q^{\ast}_{\mu}}}\\
\leq&
 (H_{NS}+o_k(1))\|\nabla_\mathbb{H} u_k\|_{L^2(\mathbb{H}^n)}^2+S_{HL}(Q,\mu).
\end{align*}
Since $H_{NS}<2-2^{\frac{1}{Q^{\ast}_{\mu}}}<1$ by Theorem~\ref{main thm0}, we know that $\{u_k\}$ is bounded in $S^{1,2}(\mathbb{H}^{n})$. {Moreover, the normalization $\|u_k\|_*=1$ and the sharp HLS inequality imply that $\|u_k\|_{L^{Q^{\ast}}(\mathbb H^n)}$ is bounded away from zero. Hence the concentration--compactness theorem of Lions \cite{L85} excludes vanishing; after applying suitable translations and dilations and passing to a subsequence, we may assume that $u_k\rightharpoonup u_0$ weakly in $S^{1,2}(\mathbb{H}^{n})$ for some nonzero $u_0\in S^{1,2}(\mathbb{H}^{n})$. By local compactness, after a further subsequence we may also assume that $u_k\to u_0$ almost everywhere in $\mathbb H^n$.} {Set $v_k=u_k-u_0$. Then}
\begin{equation}\label{weak}
 {u_k=u_0+v_k,\qquad u_0\in S^{1,2}(\mathbb H^n)\setminus\{0\},\qquad v_k\rightharpoonup0\quad\text{in }S^{1,2}(\mathbb H^n).}
\end{equation}
We first check that if the convergence is strong, then a minimizer of $H_{NS}$ must exist.
\begin{proposition}\label{promini}
Let $Q\geq4$ and $0<\mu<Q$. Assume that either $0<\mu\leq4$ with $\Gamma_{n,\mu}<0$, or \eqref{ach} holds. {If $\{u_k\}$ satisfies \eqref{sequen} and \eqref{weak}, and if $v_k\to0$ strongly in $S^{1,2}(\mathbb{H}^{n})$, then $u_0$ is a minimizer of $H_{NS}$.}
\end{proposition}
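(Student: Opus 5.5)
If $v_k\to0$ strongly in $S^{1,2}(\mathbb H^n)$, then $u_k\to u_0$ strongly. The plan is to pass to the limit separately in the numerator and in the denominator of $\mathcal L(u_k)$. The only real issue is to rule out $u_0\in\mathfrak M$.

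\textbf{Continuity of the numerator and the distance.} By the Folland--Stein inequality \eqref{folland-stein}, strong convergence in $S^{1,2}$ gives $u_k\to u_0$ in $L^{Q^\ast}(\mathbb H^n)$. The elementary bound $\big||a|^q-|b|^q\big|\le q(|a|^{q-1}+|b|^{q-1})|a-b|$ with $q=Q^\ast_\mu$, combined with Proposition~\ref{pro:HLS} for $t=r=2Q/(2Q-\mu)$ and H\"older, then shows that the double integral $\|u_k\|_*^{2q}$ converges to $\|u_0\|_*^{2q}$. In particular $\|u_0\|_*=1$, and the deficit (the numerator of $\mathcal L$) converges. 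The map $u\mapsto\mathrm{dist}(u,\mathfrak M)$ is $1$-Lipschitz in the $S^{1,2}$ norm, so $\mathrm{dist}(u_k,\mathfrak M)\to\mathrm{dist}(u_0,\mathfrak M)$. Consequently, if $u_0\notin\mathfrak M$, the limit denominator is positive and $\mathcal L(u_0)=\lim\mathcal L(u_k)=H_{NS}$, so $u_0$ is a minimizer.

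\textbf{Main step: excluding $u_0\in\mathfrak M$.} Suppose instead that $u_0\in\mathfrak M$. Then $\mathrm{dist}(u_k,\mathfrak M)\to0$, while $\inf_k\|u_k\|_{S^{1,2}}>0$ because $\|u_k\|_*=1$ and the sharp inequality \eqref{eq:HLS'} bounds $\|u_k\|_{S^{1,2}}$ from below. The sequence $\{u_k\}$ therefore satisfies the hypotheses of Lemma~\ref{local}. I treat the two parameter regimes separately.

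\emph{Case $0<\mu\le4$ and $\Gamma_{n,\mu}<0$.} Lemma~\ref{local} gives $\liminf\mathcal L(u_k)\ge H_{NS}^{\rm spec}$. On the other hand $\mathcal L(u_k)\to H_{NS}$, and Theorem~\ref{main thm0}, \eqref{upper1}, gives $H_{NS}<H_{NS}^{\rm spec}$. This is a contradiction.

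\emph{Case \eqref{ach}, with $\mu>4$.} Lemma~\ref{local} is stated only for $\mu\le4$, so here I use the local comparison instead. By \eqref{eq:HLSH} and \eqref{relation}, as in Remark~\ref{rmk2}, the nonlocal deficit is bounded below by the Folland--Stein deficit. Hence $\mathcal L(u)\ge H_{BE}(u)$, the local quotient. Near $\mathfrak M$, the local spectral analysis of \cite{TZZ24} (the analogue of Lemma~\ref{local} with only the $\nu$-form) gives $\liminf\mathcal L(u_k)\ge 2/(n+4)$. It then remains to show $H_{NS}<2/(n+4)$. For this I would use \eqref{upper2}: $H_{NS}<2-2^{2n/(4n+4-\mu)}$, and check that this value is at most $2/(n+4)$. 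That inequality is equivalent to $2^{2n/(4n+4-\mu)}\ge (2n+6)/(n+4)$, that is, to $\mu\ge 4n+4-2n/\log_2\frac{2n+6}{n+4}$, which is exactly \eqref{ach}; Lemma~\ref{num} guarantees this range is nonempty. This again contradicts $\mathcal L(u_k)\to H_{NS}$.

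The delicate point is the large-$\mu$ case: one must make sure that the local Folland--Stein lower bound near $\mathfrak M$ holds with the same distance to the same manifold $\mathfrak M$ and the same $S^{1,2}$ normalization, and that the algebraic equivalence with \eqref{ach} goes in the correct direction.
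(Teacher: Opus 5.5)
Your proposal is correct and follows essentially the same route as the paper. You pass to the limit in the numerator and in $\mathrm{dist}(u_k,\mathfrak M)$, then exclude $u_0\in\mathfrak M$: via Lemma~\ref{local} and \eqref{upper1} when $\mu\le4$, and via the local Folland--Stein threshold $2/(n+4)$, the comparison of deficits, and \eqref{upper2} under \eqref{ach}. The only differences are cosmetic: you obtain convergence of the double integral by direct HLS--H\"older continuity rather than the nonlocal Br\'ezis--Lieb splitting \eqref{BL}, and you cite \cite{TZZ24} rather than \cite{LZ15} for the local Folland--Stein bound.
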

\begin{proof}
It is clear that
\begin{equation*}
\|\nabla_\mathbb{H} u_k\|_{L^2(\mathbb{H}^n)}\rightarrow \|\nabla_\mathbb{H} u_0\|_{L^2(\mathbb{H}^n)}\quad \mathrm{and}\quad \mathrm{dist}(u_k,\mathfrak{M})\rightarrow \mathrm{dist}(u_0,\mathfrak{M}),\quad \mathrm{as}~ k\rightarrow+\infty.
\end{equation*}
 {We use the following nonlocal Br\'{e}zis--Lieb lemma; see
\cite[Lemma 2.5]{GS20},}
\begin{equation}\label{BL}
 1=\|u_k\|_*^{2\cdot Q^{\ast}_{\mu}}=\|v_k\|_*^{2\cdot Q^{\ast}_{\mu}}+\|u_0\|_*^{2\cdot Q^{\ast}_{\mu}}+o_k(1),\quad \mathrm{as} ~k\rightarrow+\infty.
\end{equation}
By \eqref{eq:HLSH}, we have $\|v_k\|_*^{2}\leq S_{HL}(Q,\mu)^{-1} \|\nabla_\mathbb{H} v_k\|_{L^2(\mathbb{H}^n)}^2\rightarrow0$.
{Together with \eqref{BL}, this implies}
\begin{equation*}
 \int_{\mathbb{H}^{n}}\int_{\mathbb{H}^{n}}\frac{|u_k(\xi)|^{Q^{\ast}_{\mu}}|u_k(\eta)|
^{Q^{\ast}_{\mu}}}{|\eta^{-1}\xi|^{\mu}}{d}\xi{d}\eta\longrightarrow \int_{\mathbb{H}^{n}}\int_{\mathbb{H}^{n}}\frac{|u_0(\xi)|^{Q^{\ast}_{\mu}}|u_0(\eta)|
^{Q^{\ast}_{\mu}}}{|\eta^{-1}\xi|^{\mu}}{d}\xi{d}\eta,\quad \mathrm{as}~k\rightarrow+\infty.
\end{equation*}
Therefore, if $u_0\notin \mathfrak{M}$, then $\mathcal {L}(u_k)\rightarrow \mathcal {L}(u_0)$, and $u_0$ is a minimizer.

However, the minimizing sequence $\{u_k\}$ cannot satisfy $\mathrm{dist}(u_k,\mathfrak{M})\rightarrow 0$. Otherwise, if $0<\mu\leq4$ and $\Gamma_{n,\mu}<0$, it follows from
\begin{equation*}
\|\nabla_\mathbb{H} u_k\|_{L^2(\mathbb{H}^n)}\geq S_{HL}(Q,\mu)^{\frac{1}{2}} \|u_k\|_*=S_{HL}(Q,\mu)^{\frac{1}{2}}
\end{equation*}
and Lemma \ref{local} that
\begin{equation*}
 H_{NS}= \liminf\limits_{k\rightarrow+\infty}\mathcal{L}(u_k){\geq}H_{NS}^{\mathrm{spec}},
\end{equation*}
which contradicts \eqref{upper1}. If $\mu>4$ and \eqref{ach} holds, then
\begin{equation*}
 H_{NS}<H_{NS}^{2\text{-peak}}= 2-2^{\frac{2n}{4n+4-\mu}} \leq\frac{2}{n+4}.
\end{equation*}
{On the other hand, under the present contradiction assumption $\mathrm{dist}(u_k,\mathfrak M)\to0$, the local expansion for the Folland--Stein--Sobolev deficit in \cite{LZ15} yields
\begin{equation*}
 \liminf_{k\to\infty}
 \frac{\int_{\mathbb{H}^{n}}|\nabla_{\mathbb{H}} u_k|^{2}{d}\xi-S(Q)\big(\int_{\mathbb{H}^{n}}|u_k|^{Q^{\ast}}d\xi\big)^{\frac{2}{Q^{\ast}}}}{\mathrm{dist}(u_k,\mathfrak{M})^2}\geq \frac{2}{n+4}.
\end{equation*}
Here the nonvanishing hypothesis in the local expansion is automatic because $\|u_k\|_*=1$ and \eqref{eq:HLS'} gives $\|u_k\|_{S^{1,2}}\ge S_{HL}(Q,\mu)^{1/2}$. Moreover, by \eqref{eq:HLSH} and \eqref{relation}, the nonlocal deficit is no smaller than the Folland--Stein--Sobolev deficit. Consequently,
\begin{align*}
 H_{NS}=\liminf_{k\to\infty}\mathcal{L}(u_k)
 \geq \liminf_{k\to\infty}
 \frac{\int_{\mathbb{H}^{n}}|\nabla_{\mathbb{H}} u_k|^{2}{d}\xi-S(Q)\big(\int_{\mathbb{H}^{n}}|u_k|^{Q^{\ast}}d\xi\big)^{\frac{2}{Q^{\ast}}}}{\mathrm{dist}(u_k,\mathfrak{M})^2}
 \geq\frac{2}{n+4},
\end{align*}
which is a contradiction.} This proves the proposition.
\end{proof}

\begin{remark}
{\rm From the proof of Proposition~\ref{promini}, we find that the crucial step in proving Proposition~\ref{promini} is establishing that
\begin{equation*}
 H_{NS}<\frac{2}{n+4}\quad \mathrm{when}~\mu>4.
\end{equation*}
Since
\begin{equation*}
 H_{NS}^{\mathrm{spec}}=\frac{(4n+8-2\mu)(\mu+4)}{2(n+4)(4n+8-\mu)}>\frac{2}{n+4} \Longleftrightarrow 0<\mu<Q,
\end{equation*}
we can only hope
\begin{equation*}
 H_{NS}^{2\text{-peak}}=2-2^{\frac{2n}{4n+4-\mu}} \leq\frac{2}{n+4}
\end{equation*}
for some $n\geq1$ and $\mu>4$,
which is equivalent to \eqref{ach}.
}
\end{remark}

{By Proposition~\ref{promini}, it remains only to prove that $v_k\to0$ strongly in $S^{1,2}(\mathbb{H}^{n})$. We therefore analyze the numerator and denominator of $\mathcal{L}(u_k)$ under the decomposition \eqref{weak}.}
{Since $S^{1,2}(\mathbb H^n)$ is a Hilbert space and $v_k\rightharpoonup0$, the cross term with $u_0$ tends to zero; hence}
\begin{equation}\label{BLG}
 \|\nabla_\mathbb{H} u_k\|_{L^2(\mathbb{H}^n)}^2=\|\nabla_\mathbb{H} v_k\|_{L^2(\mathbb{H}^n)}^2+\|\nabla_\mathbb{H} u_0\|_{L^2(\mathbb{H}^n)}^2+o_k(1),\quad \mathrm{as} ~k\rightarrow+\infty.
\end{equation}
{We next decompose $\mathrm{dist}(u_k,\mathfrak M)$ and recall that} $\mathbf{m}(u)=\sup\limits_{v\in \mathfrak{M}_1}\big(u,v^{Q^{\ast}-1}\big)^2$.

\begin{lemma}\label{l5.1}
{Let $\{u_k\}$ satisfy \eqref{weak}. Then}
\begin{equation*}
 \mathbf{m}(u_k)=\max\big\{\mathbf{m}(v_k),\mathbf{m}(u_0)\big\}+o_k(1),\quad \mathrm{as} ~k\rightarrow+\infty.
\end{equation*}
In particular,
\begin{equation*}
 \mathrm{dist}(u_k,\mathfrak{M})^2=\|\nabla_\mathbb{H} v_k\|_{L^2(\mathbb{H}^n)}^2+\|\nabla_\mathbb{H} u_0\|_{L^2(\mathbb{H}^n)}^2-S_{HL}(Q,\mu)C(Q,\mu)^{\frac{Q^{\ast}}{Q^{\ast}_{\mu}}}\max\big
 \{\mathbf{m}(v_k),\mathbf{m}(u_0)\big\}+o_k(1).
\end{equation*}
\end{lemma}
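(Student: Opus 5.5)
The plan is to follow the splitting argument of \cite[Lemma~3.3]{K25} (or its Sobolev analogue), replacing Euclidean symmetries by Heisenberg translations and dilations. Since $\mathbf m(u)=\sup_{v\in\mathfrak M_1}(u,v^{Q^\ast-1})^2$ and the pairing is linear in $u$, for each $v\in\mathfrak M_1$ we have
\[
(u_k,v^{Q^\ast-1})=(u_0,v^{Q^\ast-1})+(v_k,v^{Q^\ast-1}).
\]
The whole proof reduces to showing that, asymptotically, no single bubble $v$ can see both $u_0$ and $v_k$ at once. Once the identity for $\mathbf m(u_k)$ is proved, the distance formula follows immediately by inserting it, together with \eqref{BLG}, into \eqref{eq:l4.1}.

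\textbf{Lower bound.} Since $\mathbf m(u_0)$ is attained by some $v_0\in\mathfrak M_1$ (Lemma~\ref{l4.1}), and $v_k\rightharpoonup0$ in $S^{1,2}$, hence weakly in $L^{Q^\ast}$, we get $(v_k,v_0^{Q^\ast-1})\to0$. Thus $\mathbf m(u_k)\ge\mathbf m(u_0)+o_k(1)$. For the other term, let $w_k\in\mathfrak M_1$ be (almost) maximizers for $\mathbf m(v_k)$. Then
\[
(u_k,w_k^{Q^\ast-1})=(v_k,w_k^{Q^\ast-1})+(u_0,w_k^{Q^\ast-1}).
\]
We argue along subsequences. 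Either $w_k=B_{\lambda_k,\xi_k}$ stays in a compact set of parameters, or it degenerates ($\lambda_k\to0$, $\lambda_k\to\infty$, or $|\xi_k|\to\infty$ relative to the scale). In the degenerate case $w_k^{Q^\ast-1}\rightharpoonup0$ in $L^{(Q^\ast)'}$, because $\|w_k^{Q^\ast-1}\|_{(Q^\ast)'}$ is constant and the bubble concentrates or escapes; hence $(u_0,w_k^{Q^\ast-1})\to0$. In the compact case $w_k\to w$ strongly in $L^{Q^\ast}$, so $(v_k,w_k^{Q^\ast-1})\to0$ and $\mathbf m(v_k)\to0$. In either case $\mathbf m(u_k)\ge\mathbf m(v_k)+o_k(1)$.

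\textbf{Upper bound.} Take near-maximizers $z_k\in\mathfrak M_1$ for $\mathbf m(u_k)$ and apply the same dichotomy to $z_k$. If the parameters of $z_k$ stay bounded, then $(v_k,z_k^{Q^\ast-1})\to0$, so $(u_k,z_k^{Q^\ast-1})^2\le\mathbf m(u_0)+o_k(1)$. If they degenerate, then $(u_0,z_k^{Q^\ast-1})\to0$, so $(u_k,z_k^{Q^\ast-1})^2\le\mathbf m(v_k)+o_k(1)$. Since every subsequence has a further subsequence falling into one of these cases, we obtain $\mathbf m(u_k)\le\max\{\mathbf m(v_k),\mathbf m(u_0)\}+o_k(1)$. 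The boundedness of $\mathbf m(v_k)$, needed to pass between squared pairings and the maximum, follows from \eqref{e4.7} and the uniform $S^{1,2}$ bound on $v_k$.

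\textbf{Main obstacle.} The key step is the weak-convergence claim $B_{\lambda_k,\xi_k}^{Q^\ast-1}\rightharpoonup0$ in $L^{(Q^\ast)'}(\mathbb H^n)$ whenever the parameters leave every compact subset of $(0,\infty)\times\mathbb H^n$. I would prove it by density: test against $\varphi\in C_c(\mathbb H^n)$ and use the explicit decay of $U$ together with the scaling invariance of the $L^{(Q^\ast)'}$ norm. The one Heisenberg-specific point is to handle the non-commutative left translations with the homogeneous norm $|\xi_k^{-1}\xi|$: a bubble centred at $\xi_k$ with $|\xi_k|\to\infty$ at bounded scale has vanishing mass on any fixed compact set.
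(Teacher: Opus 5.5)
Your proposal is correct and is essentially the paper's proof. The paper combines \eqref{eq:l4.1} with \eqref{BLG} and cites \cite[Lemma 4.2]{K25} (not Lemma 3.3), whose content is the same dichotomy you use for (near-)maximizing bubbles: their parameters either stay bounded or degenerate. Your check that $B_{\lambda_k,\xi_k}^{Q^\ast-1}\rightharpoonup0$ in $L^{(Q^\ast)'}(\mathbb H^n)$ when the parameters leave every compact set supplies the Heisenberg-specific step that the citation leaves implicit.
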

\begin{proof}
{Combining \eqref{eq:l4.1} with \eqref{BLG}, the statement follows directly from} \cite[Lemma 4.2]{K25}.
\end{proof}

{The next lemma is the key consequence of the minimizing property. It balances the best one-bubble correlations of the weak limit and the remainder.}
\begin{lemma}\label{l5.2}
Let $\{u_k\}$ satisfy \eqref{sequen} and \eqref{weak}, and suppose that there exists a constant $\tilde{c}>0$ such that $\|\nabla_\mathbb{H} v_k\|_{L^2(\mathbb{H}^n)}\geq \tilde{c}$. Then
\begin{equation*}
 \mathbf{m}(u_0) =\mathbf{m}(v_k)+o_k(1),\quad \mathrm{as} ~k\rightarrow+\infty.
\end{equation*}
\end{lemma}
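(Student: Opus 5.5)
The plan is to argue by contradiction, following the strategy of \cite[Lemma 4.3]{K25} but with the nonlocal Br\'ezis--Lieb splitting \eqref{BL} replacing the local one. Suppose that, along a subsequence, $\mathbf m(u_0)\ge \mathbf m(v_k)+\delta$ for some $\delta>0$; the reverse case $\mathbf m(v_k)\ge\mathbf m(u_0)+\delta$ is handled symmetrically. Pass to a further subsequence so that
\[
a_k:=\|\nabla_{\mathbb H}v_k\|^2_{L^2}\to a\ge\tilde c^2,\qquad \alpha_k:=\|v_k\|_*^{2Q^\ast_\mu}\to\alpha .
\]
Write $b:=\|\nabla_{\mathbb H}u_0\|^2_{L^2}$, $\beta:=\|u_0\|_*^{2Q^\ast_\mu}>0$ (since $u_0\neq0$), $q=Q^\ast_\mu$ and $S=S_{HL}(Q,\mu)$. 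Embed $u_k$ in the two-parameter family $u_k^{s,t}:=s\,u_0+t\,v_k$ with $(s,t)$ near $(1,1)$.

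The first step is to expand numerator and denominator of $\mathcal L(u_k^{s,t})$ uniformly for $(s,t)$ in a compact neighbourhood of $(1,1)$. By the Hilbert splitting \eqref{BLG}, $\|\nabla_{\mathbb H}u_k^{s,t}\|^2=t^2a_k+s^2b+o_k(1)$. By the nonlocal Br\'ezis--Lieb lemma \cite[Lemma 2.5]{GS20}, applied to $s u_0+t v_k$, we have $\|u_k^{s,t}\|_*^{2q}=t^{2q}\alpha_k+s^{2q}\beta+o_k(1)$, and the error is uniform in $(s,t)$ by homogeneity. For the distance, Lemma~\ref{l5.1} gives
\[
\mathbf m(u_k^{s,t})=\max\{t^2\mathbf m(v_k),\,s^2\mathbf m(u_0)\}+o_k(1).
\]
Because of the gap $\delta$, the maximum equals $s^2\mathbf m(u_0)$ for $(s,t)$ close to $(1,1)$. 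Hence, with $d_0:=\mathrm{dist}(u_0,\mathfrak M)^2$,
\[
\mathrm{dist}(u_k^{s,t},\mathfrak M)^2=t^2a_k+s^2d_0+o_k(1).
\]
This is bounded below by $a_k/2>0$, so the denominator is harmless. One also checks that $u_k^{s,t}\notin\mathfrak M$ for $k$ large, because its distance to $\mathfrak M$ is at least $\tilde c^2/2$.

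The second step uses minimality. Let
\[
F(s,t):=\frac{t^2a+s^2b-S(t^{2q}\alpha+s^{2q}\beta)^{1/q}}{t^2a+s^2d_0}.
\]
Then $\mathcal L(u_k^{s,t})\to F(s,t)$ locally uniformly, $F(1,1)=H_{NS}$ by \eqref{sequen}, and $F\ge H_{NS}$ near $(1,1)$. So $(1,1)$ is a local minimum of $F$. I will use only the $t$-direction. Writing $N(t)=F(1,t)D(t)$, the first-order condition $N'(1)=H_{NS}D'(1)$ reads
\[
(1-H_{NS})\,a=S\,\alpha(\alpha+\beta)^{\frac1q-1}.
\]
Since $a>0$ and $H_{NS}<1$ by Theorem~\ref{main thm0}, this forces $\alpha>0$.

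The third step is the second-order computation, which is where the contradiction appears. At a critical point with $N=H_{NS}D$, the second derivative of the quotient has the sign of $N''(1)-H_{NS}D''(1)$. Using the first-order identity, this equals
\[
2S\alpha(\alpha+\beta)^{\frac1q-1}\Big[\,1-(2q-1)+2(q-1)\tfrac{\alpha}{\alpha+\beta}\Big]
=-4(q-1)S\,\alpha\beta(\alpha+\beta)^{\frac1q-2}<0 ,
\]
because $q>1$ and $\alpha,\beta>0$. So $t=1$ is a strict local maximum of $F(1,\cdot)$. Then $F(1,t_0)<H_{NS}$ for some $t_0$ near $1$, and by uniform convergence $\mathcal L(u_k^{1,t_0})<H_{NS}$ for $k$ large, which is impossible.

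In the reverse case $\mathbf m(v_k)>\mathbf m(u_0)+\delta$, the distance becomes $s^2b+\mathrm{dist}(v_k,\mathfrak M)^2+o_k(1)$. I then vary $s$ instead of $t$. The same first- and second-order computation, with the roles of $(a,\alpha)$ and $(b,\beta)$ exchanged, yields the same contradiction; here $\beta>0$ is automatic and $\alpha>0$ follows from the first-order identity. I expect the main obstacle to be the justification of the uniform expansions in the first step, in particular the uniformity in $(s,t)$ of the nonlocal Br\'ezis--Lieb splitting and of Lemma~\ref{l5.1}. The second-order sign computation itself is elementary.
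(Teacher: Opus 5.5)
Your argument is correct up to one small slip, and it takes a genuinely different route from the paper. In the case $\lim\mathbf m(v_k)>\mathbf m(u_0)$, the paper normalizes by $\|v_k\|_*$. It writes the limiting quotient as $\frac{A+C}{B+D}$ with $\frac{A}{B}\ge H_{NS}\ge\frac{C}{D}$, and then makes a \emph{finite} rescaling $F_k=c_k\tilde u_{0,k}$, with $c_k$ chosen so that $\mathbf m(F_k)=\mathbf m(\tilde v_k)$. K\"onig's monotonicity lemma \eqref{K253} gives $\frac{C}{D}>\frac{E}{F}$ and $D<F$, and his mediant inequality then gives $\lim\mathcal L(F_k+\tilde v_k)<H_{NS}$; the subcase $\tilde v_k\in\mathfrak M$ is treated separately.

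You instead rescale one component infinitesimally. In the variable $x=t^2$, the limit of $N-H_{NS}D$ is an affine function minus $S(\alpha x^q+\beta)^{1/q}$, so it is strictly concave once $\alpha,\beta>0$. Your second-order computation, which checks out as $-4(q-1)S\alpha\beta(\alpha+\beta)^{1/q-2}$, is exactly this strict concavity. It therefore cannot have an interior local minimum with value $0$.

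What each route buys:
\begin{itemize}
\item Yours is a self-contained calculus argument. It needs neither \cite[Lemmas 2.3--2.4]{K25}, nor the scale matching, nor the separate case $\tilde v_k\in\mathfrak M$.
\item The paper's finite comparison is structurally parallel to the next step, the proof of Theorem~\ref{main thm1}, where the same monotonicity lemma handles the equal-$\mathbf m$ case.
\item In that equal-$\mathbf m$ case your local argument gives nothing, because the maximum in Lemma~\ref{l5.1} has a kink at $(1,1)$. That is consistent with this being exactly the case the lemma leaves open.
\end{itemize}

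The uniformity in $(s,t)$ that you flag as the main obstacle is not needed. The bound $F\ge H_{NS}$ holds pointwise as a limit, $F$ is explicit, and the final contradiction only uses convergence of $\mathcal L(u_k^{1,t_0})$ at a single $t_0$.

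The slip is in the reverse case, $\mathbf m(v_k)>\mathbf m(u_0)+\delta$. There the first-order identity in $s$ reads $(1-H_{NS})b=S\beta(\alpha+\beta)^{1/q-1}$, which says nothing about $\alpha$. If $\alpha=0$, the limit of $N-H_{NS}D$ is affine in $s^2$, the second-order term vanishes, and no contradiction follows. You can fix this in either of two ways.
\begin{itemize}
\item Get $\alpha>0$ from \eqref{e4.7}: $\|v_k\|_*^2\ge C(Q,\mu)^{Q^{\ast}/Q^{\ast}_{\mu}}\mathbf m(v_k)\ge C(Q,\mu)^{Q^{\ast}/Q^{\ast}_{\mu}}\delta>0$. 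This is how the paper uses \eqref{contra}.
\item Vary $t$ in that case as well. Then the first-order identity is $a-H_{NS}e=S\alpha(\alpha+\beta)^{1/q-1}$ with $e=\lim\mathrm{dist}(v_k,\mathfrak M)^2\le a$. Since $a-H_{NS}e\ge(1-H_{NS})a>0$, this again forces $\alpha>0$.
\end{itemize}
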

\begin{proof}
Assume for contradiction that, up to a subsequence,
\begin{equation}\label{contra}
\lim\limits_{k\rightarrow+\infty} \mathbf{m}(v_k)> \mathbf{m}(u_0).
\end{equation}
Since $u_0\not \equiv0$, we have $\mathbf{m}(u_0)>0$ by Lemma \ref{l4.1}.
Using \eqref{e4.7} and \eqref{contra}, we obtain
\begin{equation*}
 0<\mathbf{m}(u_0)< \lim\limits_{k\rightarrow+\infty}\mathbf{m}(v_k)\leq C(Q,\mu)^{-\frac{Q^{\ast}}{Q^{\ast}_{\mu}}}\lim\limits_{k\rightarrow+\infty}\|v_k\|_*^2,
\end{equation*}
Together with \eqref{BL}, this implies that $\|v_k\|_*$ is bounded away from zero and infinity. Multiplying $u_k$ by $\frac{1}{\|v_k\|_*}$, we consider
\begin{equation*}
 \tilde{u}_k=\frac{v_k}{\|v_k\|_*}+\frac{u_0}{\|v_k\|_*}=:\tilde{v}_k+\tilde{u}_{0,k},\quad \mathrm{with}~\|\tilde{v}_k\|_*=1.
\end{equation*}
Then, by \eqref{BL}-\eqref{contra} and Lemma \ref{l5.1}, we have
\begin{align*}
 \mathcal{L}(\tilde{u}_k)=&\frac{\|\nabla_\mathbb{H} \tilde{u}_k\|_{L^2(\mathbb{H}^n)}^2-S_{HL}(Q,\mu) \Big(\int_{\mathbb{H}^{n}}\int_{\mathbb{H}^{n}}\frac{|\tilde{u}_k(\xi)|^{Q^{\ast}_{\mu}}|\tilde{u}_k(\eta)|
^{Q^{\ast}_{\mu}}}{|\eta^{-1}\xi|^{\mu}}{d}\xi{d}\eta\Big)^{\frac{1}{Q^{\ast}_{\mu}}}}{\mathrm{dist}(\tilde{u}_k,\mathfrak{M})^2}\\
=&\frac{\|\nabla_\mathbb{H} \tilde{v}_k\|_{L^2(\mathbb{H}^n)}^2+\|\nabla_\mathbb{H} \tilde{u}_{0,k}\|_{L^2(\mathbb{H}^n)}^2-S_{HL}(Q,\mu) \Big(1+\|\tilde{u}_{0,k}\|_*^{2\cdot Q^{\ast}_{\mu}}\Big)^{\frac{1}{Q^{\ast}_{\mu}}}}
{\|\nabla_\mathbb{H} \tilde{v}_k\|_{L^2(\mathbb{H}^n)}^2+\|\nabla_\mathbb{H} \tilde{u}_{0,k}\|_{L^2(\mathbb{H}^n)}^2-S_{HL}(Q,\mu)C(Q,\mu)^{\frac{Q^{\ast}}{Q^{\ast}_{\mu}}}\mathbf{m}(\tilde{v}_k)}+o_k(1)\\
=&\frac{\|\nabla_\mathbb{H} \tilde{v}_k\|_{L^2(\mathbb{H}^n)}^2-S_{HL}(Q,\mu)+\|\nabla_\mathbb{H} \tilde{u}_{0,k}\|_{L^2(\mathbb{H}^n)}^2-S_{HL}(Q,\mu) \left[\Big(1+\|\tilde{u}_{0,k}\|_*^{2\cdot Q^{\ast}_{\mu}}\Big)^{\frac{1}{Q^{\ast}_{\mu}}}-1\right]}
{\|\nabla_\mathbb{H} \tilde{v}_k\|_{L^2(\mathbb{H}^n)}^2-S_{HL}(Q,\mu)C(Q,\mu)^{\frac{Q^{\ast}}{Q^{\ast}_{\mu}}}\mathbf{m}(\tilde{v}_k)+
\|\nabla_\mathbb{H} \tilde{u}_{0,k}\|_{L^2(\mathbb{H}^n)}^2}+o_k(1).
\end{align*}

{We next estimate the quotient above using \cite[Lemmas 2.3-2.4]{K25}.} {Passing to a further subsequence if necessary, we may assume that all scalar sequences entering the limits below converge.} Suppose for the moment that $\tilde{v}_k\notin \mathfrak{M}$ for all $k$. Then set
 \begin{equation*}
 A:=\lim\limits_{k\rightarrow+\infty}\|\nabla_\mathbb{H} \tilde{v}_k\|_{L^2(\mathbb{H}^n)}^2-S_{HL}(Q,\mu),\quad B:=\lim\limits_{k\rightarrow+\infty}\left(\|\nabla_\mathbb{H} \tilde{v}_k\|_{L^2(\mathbb{H}^n)}^2-S_{HL}(Q,\mu)C(Q,\mu)^{\frac{Q^{\ast}}{Q^{\ast}_{\mu}}}\mathbf{m}(\tilde{v}_k)\right)
 \end{equation*}
 \begin{equation*}
 C:=\lim\limits_{k\rightarrow+\infty}\left\{\|\nabla_\mathbb{H} \tilde{u}_{0,k}\|_{L^2(\mathbb{H}^n)}^2-S_{HL}(Q,\mu) \left[\Big(1+\|\tilde{u}_{0,k}\|_*^{2\cdot Q^{\ast}_{\mu}}\Big)^{\frac{1}{Q^{\ast}_{\mu}}}-1\right]\right\},\quad D:=\lim\limits_{k\rightarrow+\infty}\|\nabla_\mathbb{H} \tilde{u}_{0,k}\|_{L^2(\mathbb{H}^n)}^2.
 \end{equation*}
 Notice that $A,B,C,D>0$ because $\tilde{v}_k\notin \mathfrak{M}$, $\|\nabla_\mathbb{H} \tilde{u}_{0,k}\|_{L^2(\mathbb{H}^n)}$ is bounded away from zero, and $(1+x)^\alpha<1+x^\alpha$ for any $x>0$ and $0<\alpha<1$. Since
 \begin{equation*}
 H_{NS}=\lim\limits_{k\rightarrow+\infty} \mathcal{L}(u_k)=\lim\limits_{k\rightarrow+\infty} \mathcal{L} (\tilde{u}_k)=\frac{A+C}{B+D},
 \end{equation*}
 and
 \begin{equation*}
 H_{NS}\leq \lim\limits_{k\rightarrow+\infty} \mathcal{L}(v_k)=\lim\limits_{k\rightarrow+\infty} \mathcal{L} (\tilde{v}_k)=\frac{A}{B},
 \end{equation*}
 {hence $\frac{A}{B}\geq H_{NS}\geq\frac{C}{D}$.} {Choose $c_k>0$ and set $F_k=c_k\tilde{u}_{0,k}$ so that $\mathbf{m}(F_k)=\mathbf{m}(\tilde{v}_k)$. By \eqref{contra},}
 \begin{equation*}
 \lim\limits_{k\rightarrow+\infty}c_k^2=\lim\limits_{k\rightarrow+\infty}\frac{\mathbf{m}(\tilde{v}_k)}{\mathbf{m}(\tilde{u}_{0,k})}=
 \frac{\lim\limits_{k\rightarrow+\infty}\mathbf{m}(v_k)}{\mathbf{m}(u_0)}>1.
 \end{equation*}
 Thus,
 \begin{equation*}
 {\lim\limits_{k\rightarrow+\infty}\|F_k\|_*=\lim\limits_{k\rightarrow+\infty}c_k\|\tilde{u}_{0,k}\|_*>\lim\limits_{k\rightarrow+\infty}\|\tilde{u}_{0,k}\|_*.}
 \end{equation*}
 {By \cite[Lemma 2.3]{K25}, for every $r>2$,
 \begin{equation}\label{K253}
 \eta\longmapsto \frac{(1+\eta^r)^{\frac{2}{r}}-1}{\eta^2}\quad\text{is strictly increasing on }(0,\infty).
 \end{equation}
Take $r=2Q^{\ast}_{\mu}>2$ so that $(1+\eta^r)^{2/r}=(1+\eta^{2Q^{\ast}_{\mu}})^{1/Q^{\ast}_{\mu}}$. Thus}
 \begin{align*}
 \frac{C}{D}=&1-\lim\limits_{k\rightarrow+\infty}\frac{S_{HL}(Q,\mu) \left[\Big(1+\|\tilde{u}_{0,k}\|_*^{2\cdot Q^{\ast}_{\mu}}\Big)^{\frac{1}{Q^{\ast}_{\mu}}}-1\right]}{\|\nabla_\mathbb{H} \tilde{u}_{0,k}\|_{L^2(\mathbb{H}^n)}^2}\\
 =&1-\lim\limits_{k\rightarrow+\infty}\frac{S_{HL}(Q,\mu) \left[\Big(1+\|\tilde{u}_{0,k}\|_*^{2\cdot Q^{\ast}_{\mu}}\Big)^{\frac{1}{Q^{\ast}_{\mu}}}-1\right]}{S_{[\tilde{u}_{0,k}]}\|\tilde{u}_{0,k}\|_*^2}\\
 >&1-\lim\limits_{k\rightarrow+\infty}\frac{S_{HL}(Q,\mu) \left[\Big(1+\|F_k\|_*^{2\cdot Q^{\ast}_{\mu}}\Big)^{\frac{1}{Q^{\ast}_{\mu}}}-1\right]}{S_{[\tilde{u}_{0,k}]}\|F_k\|_*^2}:=\frac{E}{F},
 \end{align*}
 where
 \begin{equation*}
 S_{[\tilde{u}_{0,k}]}=\frac{\|\nabla_\mathbb{H} \tilde{u}_{0,k}\|_{L^2(\mathbb{H}^n)}^2}{\|\tilde{u}_{0,k}\|_*^2}\geq S_{HL}(Q,\mu)>0,
 \end{equation*}
 and
 \begin{equation*}
 E:=\lim\limits_{k\rightarrow+\infty}\left\{S_{[\tilde{u}_{0,k}]}\|F_k\|_*^2-S_{HL}(Q,\mu) \left[\Big(1+\|F_k\|_*^{2\cdot Q^{\ast}_{\mu}}\Big)^{\frac{1}{Q^{\ast}_{\mu}}}-1\right]\right\},\quad F:=\lim\limits_{k\rightarrow+\infty}S_{[\tilde{u}_{0,k}]}\|F_k\|_*^2.
 \end{equation*}
 {By Lemma~\ref{l5.1}, $\mathbf{m}(F_k+\tilde{v}_k)=\mathbf{m}(\tilde{v}_k)+o_k(1)$ for $k$ large enough. Hence}
 \begin{align*}
 \mathrm{dist}(F_k+\tilde{v}_k,\mathfrak{M})^2=&\|\nabla_\mathbb{H} (F_k+\tilde{v}_k)\|_{L^2(\mathbb{H}^n)}^2-S_{HL}(Q,\mu)C(Q,\mu)^{\frac{Q^{\ast}}{Q^{\ast}_{\mu}}}
 \mathbf{m}(F_k+\tilde{v}_k)+o_k(1)\\
 =&\|\nabla_\mathbb{H} F_k\|_{L^2(\mathbb{H}^n)}^2+\|\nabla_\mathbb{H}\tilde{v}_k\|_{L^2(\mathbb{H}^n)}^2-S_{HL}(Q,\mu)C(Q,\mu)^{\frac{Q^{\ast}}{Q^{\ast}_{\mu}}}
 \mathbf{m}(\tilde{v}_k)+o_k(1)\\
 \geq &\|\nabla_\mathbb{H}\tilde{v}_k\|_{L^2(\mathbb{H}^n)}^2-S_{HL}(Q,\mu)C(Q,\mu)^{\frac{Q^{\ast}}{Q^{\ast}_{\mu}}}
 \mathbf{m}(\tilde{v}_k)+o_k(1)\\
 =&\mathrm{dist}(\tilde{v}_k,\mathfrak{M})^2>0,
 \end{align*}
{which implies $F_k+\tilde v_k\notin\mathfrak M$. Moreover,}
 \begin{align*}
 &\lim\limits_{k\rightarrow+\infty} \mathcal{L}(F_k+\tilde{v}_k)\\=&
\lim\limits_{k\rightarrow+\infty}\frac{\|\nabla_\mathbb{H} \tilde{v}_k\|_{L^2(\mathbb{H}^n)}^2-S_{HL}(Q,\mu)+\|\nabla_\mathbb{H} F_k\|_{L^2(\mathbb{H}^n)}^2-S_{HL}(Q,\mu) \left[\Big(1+\|F_k\|_*^{2\cdot Q^{\ast}_{\mu}}\Big)^{\frac{1}{Q^{\ast}_{\mu}}}-1\right]}
{\|\nabla_\mathbb{H} \tilde{v}_k\|_{L^2(\mathbb{H}^n)}^2-S_{HL}(Q,\mu)C(Q,\mu)^{\frac{Q^{\ast}}{Q^{\ast}_{\mu}}}\mathbf{m}(\tilde{v}_k)+
\|\nabla_\mathbb{H} F_k\|_{L^2(\mathbb{H}^n)}^2}\\
=&\lim\limits_{k\rightarrow+\infty}\frac{\|\nabla_\mathbb{H} \tilde{v}_k\|_{L^2(\mathbb{H}^n)}^2-S_{HL}(Q,\mu)+S_{[\tilde{u}_{0,k}]}\|F_k\|_*^2-S_{HL}(Q,\mu) \left[\Big(1+\|F_k\|_*^{2\cdot Q^{\ast}_{\mu}}\Big)^{\frac{1}{Q^{\ast}_{\mu}}}-1\right]}
{\|\nabla_\mathbb{H} \tilde{v}_k\|_{L^2(\mathbb{H}^n)}^2-S_{HL}(Q,\mu)C(Q,\mu)^{\frac{Q^{\ast}}{Q^{\ast}_{\mu}}}\mathbf{m}(\tilde{v}_k)+
S_{[\tilde{u}_{0,k}]}\|F_k\|_*^2}
=\frac{A+E}{B+F},
\end{align*}
{where we used}
\begin{equation*}
 S_{[\tilde{u}_{0,k}]}\|F_k\|_*^2=\frac{\|\nabla_\mathbb{H} \tilde{u}_{0,k}\|_{L^2(\mathbb{H}^n)}^2}{\|\tilde{u}_{0,k}\|_*^2}c_k^2\|\tilde{u}_{0,k}\|_*^2
 =c_k^2\|\nabla_\mathbb{H} \tilde{u}_{0,k}\|_{L^2(\mathbb{H}^n)}^2=\|\nabla_\mathbb{H} F_k\|_{L^2(\mathbb{H}^n)}^2.
\end{equation*}
Since
\begin{equation*}
 D=\lim\limits_{k\rightarrow+\infty}\|\nabla_\mathbb{H} \tilde{u}_{0,k}\|_{L^2(\mathbb{H}^n)}^2<\lim\limits_{k\rightarrow+\infty}c_k^2\|\nabla_\mathbb{H} \tilde{u}_{0,k}\|_{L^2(\mathbb{H}^n)}^2
 =\lim\limits_{k\rightarrow+\infty}S_{[\tilde{u}_{0,k}]}\|F_k\|_*^2=F,
\end{equation*}
 and
 \begin{equation*}
 \frac{A}{B}\geq \frac{C}{D}>\frac{E}{F},
 \end{equation*}
 using the algebraic inequality in \cite[Lemma 2.4]{K25}, we obtain
 \begin{equation*}
 H_{NS}=\lim\limits_{k\rightarrow+\infty} \mathcal{L}(u_k)=\lim\limits_{k\rightarrow+\infty} \mathcal{L} (\tilde{u}_k)=\frac{A+C}{B+D}>\frac{A+E}{B+F}=\lim\limits_{k\rightarrow+\infty} \mathcal{L}(F_k+\tilde{v}_k),
 \end{equation*}
{which contradicts $F_k+\tilde v_k\notin\mathfrak M$. Hence \eqref{contra} is impossible.}

{If $\tilde v_k\in\mathfrak M$ along a subsequence, then $A=B=0$, and the same argument gives a contradiction from $\frac{C}{D}>\frac{E}{F}$.}

 The remaining case to treat is that, up to a subsequence,
\begin{equation*}
 \mathbf{m}(u_0)>\lim\limits_{k\rightarrow+\infty} \mathbf{m}(v_k).
\end{equation*}
But here one arrives at a contradiction in a similar fashion, with the roles of $v_k$ and $u_0$
reversed, and considering
\begin{equation*}
 \hat{u}_k=\frac{v_k}{\|u_0\|_*}+\frac{u_0}{\|u_0\|_*}=:\hat{v}_k+\hat{u}_{0},\quad \mathrm{with}~\|\hat{u}_0\|_*=1.
\end{equation*}
Then
\begin{equation*}
\hat{D}:=\lim\limits_{k\rightarrow+\infty}\|\nabla_\mathbb{H} \hat{v}_{k}\|_{L^2(\mathbb{H}^n)}^2=\lim\limits_{k\rightarrow+\infty}\frac{\|\nabla_\mathbb{H} {v}_{k}\|_{L^2(\mathbb{H}^n)}^2}{\|u_0\|_*^2}\geq \frac{\tilde{c}^2}{\|u_0\|_*^2}>0.
\end{equation*}
The rest of the proof is identical to the above.
\end{proof}

We are now ready to prove the existence of minimizers for $H_{NS}$.
\begin{proof}
[Proof of Theorem~\ref{main thm1}]
{Let $\{u_k\}$ be a minimizing sequence satisfying \eqref{sequen} and \eqref{weak}. Assume for contradiction that} $v_k=u_k-u_0$ does not converge strongly to zero in $S^{1,2}(\mathbb{H}^{n})$. Then, up to a subsequence, we have $\|\nabla_\mathbb{H} v_k\|_{L^2(\mathbb{H}^n)}\geq c$ for some $c>0$. Thus, by Lemma \ref{l5.2}, we have
\begin{equation}\label{toprove}
 \mathbf{m}(u_0) =\mathbf{m}(v_k)+o_k(1),\quad \mathrm{as} ~k\rightarrow+\infty.
\end{equation}

Suppose first that $\|u_0\|_*\geq\|v_k\|_*+o_k(1)$. As in the proof of Lemma \ref{l5.2}, $\|v_k\|_*$ is bounded away from zero and infinity.
Without loss of generality,
we may assume that $\|u_0\|_*=1$ by multiplying by a suitable scalar factor. Using
\eqref{BL}, \eqref{BLG}, \eqref{toprove} and Lemma \ref{l5.1}, we obtain
\begin{align*}
 H_{NS}=&\lim\limits_{k\rightarrow+\infty}\mathcal{L}(u_k)\\
 =&
\lim\limits_{k\rightarrow+\infty}\frac{\|\nabla_\mathbb{H} u_0\|_{L^2(\mathbb{H}^n)}^2-S_{HL}(Q,\mu)+\|\nabla_\mathbb{H} v_k\|_{L^2(\mathbb{H}^n)}^2-S_{HL}(Q,\mu) \left[\Big(1+\|v_k\|_*^{2\cdot Q^{\ast}_{\mu}}\Big)^{\frac{1}{Q^{\ast}_{\mu}}}-1\right]}
{\|\nabla_\mathbb{H} u_0\|_{L^2(\mathbb{H}^n)}^2-S_{HL}(Q,\mu)C(Q,\mu)^{\frac{Q^{\ast}}{Q^{\ast}_{\mu}}}\mathbf{m}(u_0)+
\|\nabla_\mathbb{H} v_k\|_{L^2(\mathbb{H}^n)}^2}.
\end{align*}
{As in the proof of Lemma~\ref{l5.2}, \eqref{K253} gives}
\begin{align*}
 H_{NS}\geq& \lim\limits_{k\rightarrow+\infty}\frac{\|\nabla_\mathbb{H} v_k\|_{L^2(\mathbb{H}^n)}^2-S_{HL}(Q,\mu) \left[\Big(1+\|v_k\|_*^{2\cdot Q^{\ast}_{\mu}}\Big)^{\frac{1}{Q^{\ast}_{\mu}}}-1\right]}
{
\|\nabla_\mathbb{H} v_k\|_{L^2(\mathbb{H}^n)}^2}\\
=&1-\lim\limits_{k\rightarrow+\infty}\frac{S_{HL}(Q,\mu) \left[\Big(1+\|v_k\|_*^{2\cdot Q^{\ast}_{\mu}}\Big)^{\frac{1}{Q^{\ast}_{\mu}}}-1\right]}
{
S_{[v_{k}]}\|v_{k}\|_*^2}\\
\geq& 1-\frac{S_{HL}(Q,\mu) \Big(2^{\frac{1}{Q^{\ast}_{\mu}}}-1\Big)}
{
S_{[v_{k}]}}\geq 2-2^{\frac{1}{Q^{\ast}_{\mu}}},
\end{align*}
since
 \begin{equation*}
 S_{[v_{k}]}=\frac{\|\nabla_\mathbb{H} v_{k}\|_{L^2(\mathbb{H}^n)}^2}{\|v_{k}\|_*^2}\geq S_{HL}(Q,\mu)>0.
 \end{equation*}
However, this is impossible, because $H_{NS}<2-2^{\frac{1}{Q^{\ast}_{\mu}}}$ by Theorem~\ref{main thm0}.

If we assume instead the reverse inequality $\|u_0\|_*\leq\|v_k\|_*+o_k(1)$, we can obtain a contradiction by arguing in exactly the same way with the roles of $v_k$ and $u_0$ reversed.
{Thus $v_k\to0$ strongly in $S^{1,2}(\mathbb H^n)$. Proposition~\ref{promini} then completes the proof of Theorem~\ref{main thm1}.}
\end{proof}

Having settled the lower stability constant and its attainability, we turn to the reverse comparison. This question is independent of compactness of minimizing sequences and is governed by high spherical modes.

\section{\texorpdfstring{{Sharp universal upper bound: Proof of Theorem~\ref{main thm2}}}{Sharp universal upper bound: Proof of Theorem 3.3}}\label{sec6}

In this section, we establish the sharp universal upper bound $H_{UB}=1$ and show that equality cannot occur outside the extremal manifold.

\begin{proof}
[Proof of Theorem~\ref{main thm2}]
{By \eqref{eq:l4.1} and \eqref{e4.7}, for every $u\in S^{1,2}(\mathbb H^n)$, we have}
\begin{equation*}
 \mathrm{dist}(u,\mathfrak{M})^2=\|\nabla_\mathbb{H} u\|_{L^2(\mathbb{H}^n)}^2-S_{HL}(Q,\mu)C(Q,\mu)^{\frac{Q^{\ast}}{Q^{\ast}_{\mu}}}\mathbf{m}(u)\geq
 \|\nabla_\mathbb{H} u\|_{L^2(\mathbb{H}^n)}^2-S_{HL}(Q,\mu) \|u\|_*^2.
\end{equation*}
{Let $\bar c$ denote the optimal constant in the reverse inequality. The preceding estimate shows that $0<\bar c\leq1$ and that}
\begin{equation}\label{uppmini}
 \int_{\mathbb{H}^{n}}|\nabla_{\mathbb{H}} u|^{2}{d}\xi-S_{HL}(Q,\mu) \left(\int_{\mathbb{H}^{n}}\int_{\mathbb{H}^{n}}\frac{|u(\xi)|^{Q^{\ast}_{\mu}}|u(\eta)|
^{Q^{\ast}_{\mu}}}{|\eta^{-1}\xi|^{\mu}}{d}\xi{d}\eta\right)^{\frac{1}{Q^{\ast}_{\mu}}}\leq \bar{c}\ \mathrm{dist}(u,\mathfrak{M})^2,\ \ \forall \, u\in S^{1,2}(\mathbb{H}^{n}).
\end{equation}

{To prove sharpness, fix $m\geq1$ and choose a nonzero real-valued spherical harmonic
\[
 \omega_m\in(\mathcal H_{m,m}^{n+1})_{\mathbb R}.
\]
Set
\begin{equation*}
 \phi_m(\xi)=[\mathcal J_{\mathcal C}(\xi)]^{\frac1{Q^\ast}}\omega_m(\mathcal C\xi)
 =(\mathcal C^*\omega_m)(\xi),
\end{equation*}
and consider the perturbations
\begin{equation*}
 f_k=U_\mu+\epsilon_k\phi_m,
\end{equation*}
where $U_\mu=\mathcal{A}U$ with
\[
 \mathcal A=S(Q)^{\frac{(Q-\mu)(2-Q)}{4(Q+2-\mu)}}
 C(Q,\mu)^{\frac{2-Q}{2(Q+2-\mu)}},
 \qquad \epsilon_k\to0^+\quad\text{as }k\to+\infty.
\]
Since $(m,m)$ is disjoint from the tangent modes $(0,0),(1,0),(0,1)$, the real-block decomposition gives
\[
 \phi_m\perp T_{U_\mu}\mathfrak M
 \quad\text{in }S^{1,2}(\mathbb H^n).
\]
Furthermore, $\phi_m$ is a common eigenfunction of the two spectral problems and satisfies
\begin{equation*}
 -\Delta_{\mathbb{H}} \phi_m=\nu_{m,m}\left(\int_{\mathbb{H}^{n}}\frac{|U_\mu(\eta)|
^{Q^{\ast}_{\mu}}}{|\eta^{-1}\xi|^{\mu}}{d}\eta\right)|U_\mu|^{Q^{\ast}_{\mu}-2}\phi_m,\qquad \xi\in\mathbb{H}^{n},
\end{equation*}
and
\begin{equation*}
 -\Delta_{\mathbb{H}} \phi_m=\tau_{m,m}\left(\int_{\mathbb{H}^{n}}\frac{|U_\mu(\eta)|
^{Q^{\ast}_{\mu}-1}\phi_m(\eta)}{|\eta^{-1}\xi|^{\mu}}{d}\eta\right)|U_\mu|^{Q^{\ast}_{\mu}-1},\qquad \xi\in\mathbb{H}^{n}.
\end{equation*}
For each fixed $m$, the same Taylor expansion used in the proof of Theorem~\ref{main thm0}-\eqref{upper1} in Section~\ref{sec41} gives
\begin{equation*}
 \|f_k\|^2_*=\|U_\mu\|_*^2+\epsilon_k^2 \|U_\mu\|_*^{2(1-Q^{\ast}_{\mu})}
\left(\frac{Q^{\ast}_{\mu}-1}{\nu_{m,m}}+\frac{Q^{\ast}_{\mu}}{\tau_{m,m}}\right)\|\nabla_\mathbb{H}\phi_m\|^2_{{L}^{2}(\mathbb{H}^n)}+
o\big(\epsilon_k^2\big).
\end{equation*}
Moreover, since $\phi_m$ is normal to $T_{U_\mu}\mathfrak M$, the tubular-neighborhood projection gives
\[
 \mathrm{dist}(f_k,\mathfrak{M})^2=\epsilon_k^2
 \|\nabla_\mathbb{H}\phi_m\|^2_{{L}^{2}(\mathbb{H}^n)}
\]
for $k$ large enough. Therefore,
 \begin{align*}
 &\bar c\geq\frac{\int_{\mathbb{H}^{n}}|\nabla_{\mathbb{H}} f_k|^{2}{d}\xi-S_{HL}(Q,\mu) \Big(\int_{\mathbb{H}^{n}}\int_{\mathbb{H}^{n}}\frac{|f_k(\xi)|^{Q^{\ast}_{\mu}}|f_k(\eta)|
^{Q^{\ast}_{\mu}}}{|\eta^{-1}\xi|^{\mu}}{d}\xi{d}\eta\Big)^{\frac{1}{Q^{\ast}_{\mu}}}}{\mathrm{dist}(f_k,\mathfrak{M})^2} \\
=&\frac{1}{\epsilon_k^2\|\nabla_\mathbb{H}\phi_m\|^2_{{L}^{2}(\mathbb{H}^n)}}\Bigg\{
\|\nabla_\mathbb{H}U_\mu\|^2_{{L}^{2}(\mathbb{H}^n)}
+\epsilon_k^2\|\nabla_\mathbb{H}\phi_m\|^2_{{L}^{2}(\mathbb{H}^n)}\\
&\qquad-S_{HL}(Q,\mu)\Bigg[\|U_\mu\|_*^2
+\epsilon_k^2\|\nabla_\mathbb{H}\phi_m\|^2_{{L}^{2}(\mathbb{H}^n)}
\|U_\mu\|_*^{2(1-Q^{\ast}_{\mu})}
\left(\frac{Q^{\ast}_{\mu}-1}{\nu_{m,m}}+\frac{Q^{\ast}_{\mu}}{\tau_{m,m}}\right)\Bigg]\Bigg\}+\frac{o\big(\epsilon_k^2\big)}{\epsilon_k^2}\\
=&1-\left(\frac{Q^{\ast}_{\mu}-1}{\nu_{m,m}}+\frac{Q^{\ast}_{\mu}}{\tau_{m,m}}\right)+o_k(1)\\
&\longrightarrow 1-\left(\frac{Q^{\ast}_{\mu}-1}{\nu_{m,m}}+\frac{Q^{\ast}_{\mu}}{\tau_{m,m}}\right)
\qquad\text{as }k\to+\infty.
 \end{align*}
Here we used
 \begin{equation*}
 S_{HL}(Q,\mu) \|U_\mu\|_*^2=\|\nabla_\mathbb{H}U_\mu\|^2_{{L}^{2}(\mathbb{H}^n)}=\|U_\mu\|_*^{2\cdot Q^{\ast}_{\mu}}.
 \end{equation*}
It follows that
 \begin{equation*}
 \bar{c}\geq 1-\left(\frac{Q^{\ast}_{\mu}-1}{\nu_{m,m}}+\frac{Q^{\ast}_{\mu}}{\tau_{m,m}}\right)
 \end{equation*}
for every $m\ge1$. Since $\nu_{m,m},\tau_{m,m}\to+\infty$ as $m\to\infty$, letting $m\to\infty$ yields $\bar c\geq1$. Since already $\bar c\leq1$, we conclude that $\bar c=1$.}

 {Finally, equality in \eqref{uppmini} cannot occur outside $\mathfrak{M}$. Otherwise,} there exists $\bar{u}\in S^{1,2}(\mathbb{H}^{n})\setminus \mathfrak{M}$ such that
 \begin{equation*}
\mathrm{dist}(\bar{u},\mathfrak{M})^2= \int_{\mathbb{H}^{n}}|\nabla_{\mathbb{H}} \bar{u}|^{2}{d}\xi-S_{HL}(Q,\mu) \left(\int_{\mathbb{H}^{n}}\int_{\mathbb{H}^{n}}\frac{|\bar{u}(\xi)|^{Q^{\ast}_{\mu}}|\bar{u}(\eta)|
^{Q^{\ast}_{\mu}}}{|\eta^{-1}\xi|^{\mu}}{d}\xi{d}\eta\right)^{\frac{1}{Q^{\ast}_{\mu}}},
\end{equation*}
which is equivalent to $\mathbf{m}(\bar{u})= C(Q,\mu)^{-\frac{Q^{\ast}}{Q^{\ast}_{\mu}}}\|\bar{u}\|_*^2$. However, this is true if and only if $\bar{u}\in \mathfrak{M}$ by \eqref{e4.7}, a contradiction. The proof of Theorem~\ref{main thm2} is thereby completed.
\end{proof}

The functional deficit is only one measure of stability. We finally consider the Euler--Lagrange residual, for which the numerator is linearized through the inverse sub-Laplacian and therefore requires a separate expansion.

\section{\texorpdfstring{{Critical-point quantitative stability: Proof of Theorem~\ref{addthm}}}{Critical-point quantitative stability: Proof of Theorem 3.4}}\label{sec7}

\begingroup
We use the same spherical normalization as in Subsection~\ref{sec41}. Put
\[
 q:=Q^{\ast}_{\mu}=\frac{4n+4-\mu}{2n},
\]
and let $\mathscr A$ and $\mathcal B_\mu$ be the normalized CR energy operator and the normalized HLS bilinear form introduced there. For $v>0$, define the coefficient-one Euler--Lagrange residual weakly by
\begin{equation}\label{sphere-residual}
 \big\langle\mathscr R_\mu(v),h\big\rangle
 :=\langle\mathscr Av,h\rangle-\mathcal B_\mu(v^q,v^{q-1}h)
\end{equation}
for every test function $h$. Its dual energy norm is
\[
 \|F\|_{\mathscr A^{-1}}^2:=\langle F,\mathscr A^{-1}F\rangle.
\]
Let $\mathfrak M_{1,\mathbb S}$ denote the image, under this normalization, of the fixed-amplitude single-bubble family $\mathfrak M_1$. The constant function $1$ belongs to $\mathfrak M_{1,\mathbb S}$ and satisfies $\mathscr R_\mu(1)=0$.

\begin{proof}
[Proof of Theorem~\ref{addthm}]
Set
\[
 \nu_2:=\nu_{2,0}=\frac{n+4}{n},
 \qquad
 \vartheta_2:=\vartheta_{2,0}
 =\frac{\mu(\mu+4)}{(4n+4-\mu)(4n+8-\mu)},
\]
and
\begin{equation}\label{CP-ell2}
 \ell_2:=\nu_2-(q-1)-q\vartheta_2.
\end{equation}
A direct simplification gives
\begin{equation}\label{CP-kappa}
 \frac{\ell_2}{\nu_2}
 =\frac{(4n+8-2\mu)(\mu+4)}{2(n+4)(4n+8-\mu)}
 =H_{NS}^{\rm spec}.
\end{equation}
Moreover,
\[
 \ell_2=\frac{(\mu+4)(2n+4-\mu)}{n(4n+8-\mu)}>0
\]
because $0<\mu<Q=2n+2$.

Let
\[
 v_\epsilon=1+\epsilon\phi,
 \qquad
 \phi(\zeta)=\operatorname{Re}(\zeta_{n+1}^2)
\]
with $\phi$ as in \eqref{phiLi}. Recall that
\begin{equation}\label{CP-M2}
 M_2:=\int_{\mathbb S^{2n+1}}\phi^2\,d\sigma
 =\frac{1}{(n+1)(n+2)},
 \qquad
 \mathscr A\phi=\nu_2\phi.
\end{equation}
The fixed-amplitude orbit has tangent space
\[
 T_1\mathfrak M_{1,\mathbb S}
 =\big(\mathcal H_{1,0}^{n+1}\oplus\mathcal H_{0,1}^{n+1}\big)_{\mathbb R}.
\]
Since $\phi\in(\mathcal H_{2,0}^{n+1}\oplus\mathcal H_{0,2}^{n+1})_{\mathbb R}$ and $\mathscr A$ is diagonal on the spherical-harmonic decomposition, $\phi$ is $\mathscr A$-orthogonal to $T_1\mathfrak M_{1,\mathbb S}$. The standard nearest-point expansion for a smooth finite-dimensional submanifold of a Hilbert space therefore yields
\begin{equation}\label{CP-distance}
 \operatorname{dist}_{\mathscr A}(v_\epsilon,\mathfrak M_{1,\mathbb S})^2
 =\epsilon^2\langle\phi,\mathscr A\phi\rangle+o(\epsilon^2)
 =\nu_2M_2\epsilon^2+o(\epsilon^2).
\end{equation}
In particular, $v_\epsilon\notin\mathfrak M_{1,\mathbb S}$ for all sufficiently small $\epsilon\ne0$.

We next linearize the residual. Since $\phi$ is smooth and bounded, $v_\epsilon$ stays uniformly away from zero for $|\epsilon|$ small. Hence, uniformly on $\mathbb S^{2n+1}$,
\[
 (1+\epsilon\phi)^q=1+q\epsilon\phi+O(\epsilon^2),
 \qquad
 (1+\epsilon\phi)^{q-1}=1+(q-1)\epsilon\phi+O(\epsilon^2).
\]
Using \eqref{sphere-residual}, $\mathscr R_\mu(1)=0$, and the symmetry of $\mathcal B_\mu$, we obtain
\begin{equation}\label{CP-linearization}
 \mathscr R_\mu(v_\epsilon)=\epsilon L\phi+O_{\mathscr A^{-1}}(\epsilon^2),
\end{equation}
where
\[
 \langle Lr,h\rangle
 :=\langle\mathscr Ar,h\rangle
 -(q-1)\int_{\mathbb S^{2n+1}}rh\,d\sigma
 -q\mathcal B_\mu(r,h).
\]
Here the remainder estimate in \eqref{CP-linearization} follows from the spherical spectral decomposition: the normalized HLS operator has multipliers $0<\vartheta_{i,j}\le1$, while $\mathscr A^{-1}$ has multipliers $0<\nu_{i,j}^{-1}\le1$. Thus both operators are bounded on $L^2(\mathbb S^{2n+1})$, and the uniform Taylor remainders above are $O(\epsilon^2)$ in the dual energy norm. By the Funk--Hecke diagonalization,
\[
 \mathcal B_\mu(\phi,h)=\vartheta_2
 \int_{\mathbb S^{2n+1}}\phi h\,d\sigma,
\]
so \eqref{CP-ell2} gives $L\phi=\ell_2\phi$. Since $\mathscr A^{-1}\phi=\nu_2^{-1}\phi$, \eqref{CP-linearization} and \eqref{CP-M2} imply
\begin{equation}\label{CP-residual-expansion}
 \|\mathscr R_\mu(v_\epsilon)\|_{\mathscr A^{-1}}^2
 =\frac{\ell_2^2}{\nu_2}M_2\epsilon^2+o(\epsilon^2).
\end{equation}
Combining \eqref{CP-distance} and \eqref{CP-residual-expansion}, and using $\ell_2>0$, we find
\begin{equation}\label{CP-quotient-limit}
 \frac{\|\mathscr R_\mu(v_\epsilon)\|_{\mathscr A^{-1}}}
 {\operatorname{dist}_{\mathscr A}(v_\epsilon,\mathfrak M_{1,\mathbb S})}
 =\frac{\ell_2}{\nu_2}+o(1)
 =H_{NS}^{\rm spec}+o(1).
\end{equation}

It remains to pass back to the Heisenberg group and to verify the admissibility condition in the definition of $H_{CP}(1)$. Let $\mathcal T_\mu$ denote the inverse Cayley transform together with the fixed amplitude normalization, so that $\mathcal T_\mu 1=U_\mu$. The conformal covariance identities used to obtain $\mathscr A$ and $\mathcal B_\mu$ imply that there is a constant $c_\mu>0$, independent of $v$, such that
\[
 \|\mathcal T_\mu w\|_{S^{1,2}(\mathbb H^n)}^2
 =c_\mu\langle w,\mathscr Aw\rangle
\]
and, at the level of the weak Euler--Lagrange residual,
\[
 \big\langle R_{\mathbb H}(\mathcal T_\mu v),\mathcal T_\mu h\big\rangle
 =c_\mu\big\langle\mathscr R_\mu(v),h\big\rangle.
\]
Consequently,
\[
 \|R_{\mathbb H}(\mathcal T_\mu v)\|_{(S^{1,2})^{-1}}
 =c_\mu^{1/2}\|\mathscr R_\mu(v)\|_{\mathscr A^{-1}},
 \qquad
 \operatorname{dist}(\mathcal T_\mu v,\mathfrak M_1)
 =c_\mu^{1/2}\operatorname{dist}_{\mathscr A}(v,\mathfrak M_{1,\mathbb S}),
\]
so the residual-to-distance quotient is unchanged.

Set $u_\epsilon:=\mathcal T_\mu v_\epsilon$. For $|\epsilon|$ sufficiently small, $v_\epsilon>0$, hence $u_\epsilon\ge0$, and \eqref{CP-distance} shows that $u_\epsilon\notin\mathfrak M_1$ when $\epsilon\ne0$. Furthermore, $u_\epsilon\to U_\mu$ in $S^{1,2}(\mathbb H^n)$. Testing \eqref{limit3} with $U_\mu$ and using equality in \eqref{eq:HLS'} give
\[
 \|U_\mu\|_{S^{1,2}}^2
 =S_{HL}(Q,\mu)^{\frac{q}{q-1}}
 =S_{HL}(Q,\mu)^{\frac{2Q-\mu}{Q+2-\mu}}.
\]
Thus, for all sufficiently small nonzero $\epsilon$, $u_\epsilon$ satisfies the energy window \eqref{addbound} with $m=1$. Applying the definition of $H_{CP}(1)$ to $u_\epsilon$ and then letting $\epsilon\to0$ in \eqref{CP-quotient-limit} yields
\[
 H_{CP}(1)\le H_{NS}^{\rm spec},
\]
which is \eqref{CP-nonstrict}.
\end{proof}
\endgroup

\subsection*{Acknowledgments}
{This work was supported by the Natural Science Foundation of Chongqing, China} (CSTB2024NSCQ-LZX0038).

\subsection*{Conflict of interest}
On behalf of all authors, the corresponding author states that there is no conflict of interest.

\vspace{.2cm}

\subsection*{Data availability}
{Data sharing is not applicable as no datasets were generated or analyzed in this study.}

\section{Appendix}\label{App}

\begingroup
\subsection{\texorpdfstring{{An explicit sign check at $\mu=2$}}{An explicit sign check at mu=2}}
\begin{lemma}\label{Gamma-mu-two}
For every integer $n\ge1$, the fourth-order coefficient in \eqref{Gamma-def} satisfies
\[
 \Gamma_{n,2}<0.
\]
\end{lemma}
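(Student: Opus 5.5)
The plan is a direct specialization of \eqref{C0-def}--\eqref{Gamma-def} to $\mu=2$. The goal is to write $\Gamma_{n,2}$ as $-P(n)/D(n)$, where $D$ is an explicit product of positive linear factors in $n$ and $P$ is a polynomial with integer coefficients. One then checks that $P(n)>0$ for every integer $n\ge1$.

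\textbf{Step 1: the parameters at $\mu=2$.} With $\mu=2$,
\[
q=\frac{2n+1}{n},\qquad q-1=\frac{n+1}{n},\qquad q-2=\frac1n,\qquad q-3=\frac{1-n}{n}.
\]
From the formulas preceding \eqref{dc-def},
\[
\vartheta_2=\frac{3}{(2n+1)(2n+3)},\quad \vartheta_{11}=\frac1{(2n+1)^2},\quad \vartheta_{22}=\vartheta_2^2,\quad
\vartheta_{40}=\frac{105}{(2n+1)(2n+3)(2n+5)(2n+7)}.
\]
The eigenvalues $\nu_2,\nu_{40},\nu_{11},\nu_{22}$ are taken from \eqref{higher-eigenvalues}, the moments $M_2,M_4$ from Section~\ref{sec41}, and the projection masses $m_\alpha$ from \eqref{projection-masses}.

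From these I compute $b=(q-1)+q\vartheta_2$, then the $c_\alpha$ from \eqref{dc'-def} and the $d_\alpha$ from \eqref{dc-def}. Each is a rational function of $n$. I will record explicitly that every $d_\alpha$ has a positive numerator and denominator for $n\ge1$; this follows from $\kappa_\alpha>\kappa_{\rm spec}$, but it is cleaner to see it in closed form.

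\textbf{Step 2: assemble and clear denominators.} I substitute everything into $\mathscr C_0$ in \eqref{C0-def} and into $\sum_\alpha c_\alpha^2 m_\alpha/(4d_\alpha)$. Then I put the whole expression over a common denominator built from powers of
\[
n,\; n+1,\; n+2,\; n+3,\; n+4,\; 2n+1,\; 2n+3,\; 2n+5,\; 2n+7,
\]
together with the numerators of the $d_\alpha$. This gives $\Gamma_{n,2}=-P(n)/D(n)$ with $D(n)>0$ for $n\ge1$.

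One simplification helps. The term $-\frac{(q-1)(q-2)(q-3)}{12}M_4$ is positive when $n>1$, because $q-3<0$. Every other term in $\mathscr C_0$ is negative except $\frac{q-1}{2}b^2M_2^2$. So the real competition is between
\[
\frac{q-1}{2}b^2M_2^2+\frac{(q-1)(n-1)}{12n^2}M_4
\qquad\text{and}\qquad
\frac{q(q-1)^2}{4}J_2+\frac{q(q-1)(q-2)}{3}\vartheta_2M_4+\sum_\alpha\frac{c_\alpha^2m_\alpha}{4d_\alpha}.
\]
At leading order in $1/n$, $q\to2$, $b\to1$ and $\vartheta_\alpha\to0$. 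As a sanity check before the full computation, I will compute the leading asymptotic coefficient of $n^4\Gamma_{n,2}$ and confirm that it is strictly negative. I expect the negativity to come mainly from the term $\frac{q(q-1)^2}{4}M_2^2=\tfrac12M_2^2(1+o(1))$ beating $\frac{q-1}{2}b^2M_2^2$.

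\textbf{Step 3: positivity of $P$ (the main obstacle).} This is pure but heavy algebra: $P$ will have fairly high degree, perhaps around $20$. My first attempt is to substitute $n=1+m$ and expand $P(1+m)$. If all coefficients come out positive, then $P(n)>0$ for every $n\ge1$, and in fact for every real $n\ge1$.

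If some coefficients are negative, I will fall back on a split. First, bound the coefficients to show positivity for $n\ge N_0$ with a small $N_0$, by grouping each negative coefficient with a dominant higher-degree positive one. Second, evaluate $\Gamma_{n,2}$ exactly as a rational number for the finitely many $n<N_0$.

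The expected difficulty is entirely bookkeeping. It consists of keeping track of the three $\alpha$-modes with their distinct denominators, and of making sure that the $n=1$ case is handled correctly; there the $(q-3)$ term vanishes, and $m_{11}$ and $m_{22}$ should be checked against the actual dimension of the corresponding harmonic spaces.
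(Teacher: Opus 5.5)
Your plan is the paper's own proof: specialize every quantity to $\mu=2$, write $\Gamma_{n,2}=-P/D$ with $D$ a product of factors that are positive for $n\ge1$, and show $P>0$; the paper obtains a degree-$13$ numerator $P_{13}$ with all coefficients strictly positive, so your first attempt in Step~3 already succeeds without a shift or a case split. One correction to your heuristic: $\frac{q(q-1)^2}{4}$ and $\frac{q-1}{2}b^2$ both tend to $\frac12$, so the $M_2^2$ terms cancel at order $n^{-4}$, and in fact $\mathscr C_0>0$ for every $n\ge1$ (the paper's $R_8$ has positive coefficients, and $\mathscr C_0\sim n^{-5}$); the negativity $n^4\Gamma_{n,2}\to-\frac15$ therefore comes entirely from the subtracted correction sum, and for large $n$ mainly from the $\alpha=11$ term, where $d_{11}\sim 5/n^2$ is nearly resonant.
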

\begin{proof}
Set $\mu=2$. Then
\[
 q=2+\frac1n,
 \qquad
 \vartheta_2=\frac{3}{(2n+1)(2n+3)},
 \qquad
 \vartheta_{11}=\frac{1}{(2n+1)^2},
\]
and
\[
 \vartheta_{22}=\frac{9}{(2n+1)^2(2n+3)^2},
 \qquad
 \vartheta_{40}=\frac{105}{(2n+1)(2n+3)(2n+5)(2n+7)}.
\]
Together with \eqref{higher-eigenvalues}, these identities give
\begin{equation}\label{Gamma2-b}
 b=\frac{2n^2+5n+6}{n(2n+3)}.
\end{equation}
The coefficients in \eqref{dc'-def} become
\begin{equation}\label{Gamma2-c}
\begin{aligned}
 c_{40}&=\frac{2(n+1)(n+6)}{n^2(2n+3)}
 \frac{4n^2+18n+35}{(2n+5)(2n+7)},\\
 c_{11}&=\frac{2(n+1)(2n^2+11n+6)}
 {n^2(2n+1)(2n+3)},\\
 c_{22}&=\frac{2(n+1)(4n^3+26n^2+39n+18)}
 {n^2(2n+1)(2n+3)^2}.
\end{aligned}
\end{equation}
while \eqref{dc-def} yields
\begin{equation}\label{Gamma2-d}
\begin{aligned}
 d_{40}&=\frac{4(n+1)}{n(n+4)(2n+3)}
 \frac{8n^3+63n^2+181n+210}{(2n+5)(2n+7)},\\
 d_{11}&=\frac{4(n+1)^2(5n+6)}
 {n^2(n+4)(2n+1)(2n+3)},\\
 d_{22}&=\frac{4(n+1)(8n^3+31n^2+45n+18)}
 {n^2(2n+1)(2n+3)^2}.
\end{aligned}
\end{equation}
In particular, $d_{40},d_{11},d_{22}>0$ for every $n\ge1$, as is also required by \eqref{dc-def}.

For completeness, substituting \eqref{projection-masses} and the above values into \eqref{C0-def} gives
\begin{equation}\label{Gamma2-C0}
 \mathscr C_0=
 \frac{R_8(n)}
 {n^3(n+1)(n+2)^2(n+3)(n+4)(2n+1)(2n+3)^2(2n+5)(2n+7)},
\end{equation}
where
\begin{align*}
 R_8(n)={}&32n^8+448n^7+2680n^6+8824n^5+18160n^4\\
 &+25594n^3+25325n^2+15516n+3780.
\end{align*}
Now insert \eqref{Gamma2-c}--\eqref{Gamma2-C0}, together with the masses in \eqref{projection-masses}, into
\[
 \Gamma_{n,2}=\mathscr C_0-
 \frac{c_{40}^2}{4d_{40}}m_{40}
 -\frac{c_{11}^2}{4d_{11}}m_{11}
 -\frac{c_{22}^2}{4d_{22}}m_{22}.
\]
Putting the four terms over a common denominator and cancelling common factors gives the exact identity
\begin{equation}\label{Gamma-mu-two-formula}
 \Gamma_{n,2}=-\frac{P_{13}(n)}{D_{13}(n)},
\end{equation}
where
\begin{align*}
P_{13}(n)={}&512n^{13}+13824n^{12}+179704n^{11}+1448280n^{10}
 +7942722n^9+30988200n^8\\
&+87889138n^7+182455437n^6+275750762n^5+297991281n^4\\
&+222632064n^3+108579096n^2+31085856n+3977424,
\end{align*}
and
\begin{align*}
D_{13}(n)={}&2n^2(n+1)^2(n+2)^2(n+3)(n+4)(2n+3)^2(5n+6)\\
&\times(8n^3+31n^2+45n+18)(8n^3+63n^2+181n+210).
\end{align*}
All coefficients of $P_{13}$ are strictly positive, and every factor in $D_{13}$ is strictly positive for $n\ge1$. Hence \eqref{Gamma-mu-two-formula} proves $\Gamma_{n,2}<0$ for every integer $n\ge1$.
\end{proof}
\endgroup

\subsection{\texorpdfstring{{The large-$\mu$ threshold}}{The large-mu threshold}}
\begin{lemma}\label{num}
For $t\geq1$, the function
\begin{equation*}
f(t)=2-\frac{2}{t+4}-2^{\frac{t}{t+1}}
\end{equation*}
admits a unique zero $t_1\approx {4.624}$. Furthermore, $f(t)>0$ for all $t<t_1$ and $f(t)<0$ for all $t>t_1$.
\end{lemma}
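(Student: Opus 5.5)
The plan is to study $f$ on $[1,\infty)$ through the auxiliary function $g(t):=2-\frac{2}{t+4}$ together with $h(t):=2^{t/(t+1)}$, so that $f=g-h$.

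\emph{Endpoint behaviour.} At $t=1$ we get $f(1)=2-\tfrac25-\sqrt2\approx0.186>0$. As $t\to\infty$, $g(t)=2-\frac{2}{t}+O(t^{-2})$, while
\[
h(t)=2\cdot 2^{-1/(t+1)}=2-\frac{2\log 2}{t}+O(t^{-2}).
\]
Since $2\log2\approx1.386<2$, this gives $f(t)=\frac{2\log 2-2}{t}+O(t^{-2})<0$ for large $t$. A concrete check is $f(10)=2-\tfrac17-2^{10/11}\approx1.857-1.878<0$. A zero therefore exists in $(1,10)$, and bracketing (for example $f(4.6)>0>f(4.65)$ by direct evaluation) locates it near $4.624$.

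\emph{Uniqueness.} It is cleaner to substitute $s=\frac{1}{t+1}\in(0,\tfrac12]$, so that $t+4=\frac{1+3s}{s}$ and
\[
f=2-\frac{2s}{1+3s}-2^{1-s}.
\]
Dividing by $2$, the sign of $f$ equals the sign of
\[
F(s):=\frac{1+2s}{1+3s}-2^{-s}.
\]
Set $G(s):=\log\frac{1+2s}{1+3s}+s\log2$, which has the same sign as $F$ because $\log$ is increasing. We have $G(0)=0$ and
\[
G'(s)=\frac{2}{1+2s}-\frac{3}{1+3s}+\log2=\log2-\frac{1}{(1+2s)(1+3s)}.
\]
The correction term is strictly increasing in $s$, so $G'$ changes sign exactly once on $(0,\infty)$: at $s=0$ it equals $\log2-1<0$, and it is positive for large $s$. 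Hence $G$ decreases from $G(0)=0$ and then increases, which means $G$ has at most one zero in $(0,\infty)$.

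\emph{Sign pattern.} Since $G(1/2)=\log\tfrac45+\tfrac12\log2\approx0.123>0$ (this is the value at $t=1$), $G$ has exactly one zero $s_1\in(0,\tfrac12)$. It is negative on $(0,s_1)$ and positive on $(s_1,\tfrac12]$. Translating back through $t=\frac1s-1$, which is decreasing in $s$, we get $f>0$ for $1\le t<t_1$ and $f<0$ for $t>t_1$, with $t_1=\frac{1}{s_1}-1\approx4.624$.

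The only step that needs real care is the numerical bracketing of $t_1$. The monotonicity argument is exact once the reduction to $G$ is made, and that reduction is the key idea.
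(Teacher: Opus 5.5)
Your proof is correct. It uses the same overall strategy as the paper: a sign analysis with a single sign change of a derivative, a positive value at $t=1$, and the value $0$ at $t=\infty$. The execution differs, though.

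The paper works with $f$ itself in the variable $t$. It factors $f'(t)=\frac{2^{t/(t+1)}}{(t+4)^2}\,g(t)$ with $g(t)=2^{\frac{1}{t+1}}-\big(\frac{t+4}{t+1}\big)^2\log 2$. It then needs a second differentiation, $g'(t)=\frac{\log 2}{(t+1)^2}\big[6\,\frac{t+4}{t+1}-2^{\frac{1}{t+1}}\big]>0$, to conclude that $f$ first decreases and then increases. Finally it combines this with $f(1)>0$ and $\lim_{t\to\infty}f(t)=0^-$, the latter asserted without computation.

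Your substitution $s=\frac{1}{t+1}$, followed by passing to logarithms, turns the exponential into the linear term $s\log 2$. As a result $G'(s)=\log 2-\frac{1}{(1+2s)(1+3s)}$ is monotone by inspection, and no second auxiliary function is needed. The point $t=\infty$ also becomes the endpoint $s=0$, where $G(0)=0$ and $G'(0)=\log 2-1<0$ exactly. This makes your asymptotic expansion of $f$ logically unnecessary, although it is correct and supplies what the paper leaves implicit.

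The trade-off is that you control only the sign of $f$, not its intervals of monotonicity. That is because $G=\log\big(\frac{1+2s}{1+3s}\,2^{s}\big)$ is a monotone transform of a ratio, not of $f$. For this lemma the sign pattern is all that is needed.

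On the value $t_1\approx 4.624$: your bracket $f(4.6)>0>f(4.65)$ is correct, with values of about $\pm 3\times10^{-4}$. However, it only places $t_1$ in $(4.6,4.65)$, so the third decimal needs a finer evaluation. The paper likewise reports this value only numerically.
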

\begin{proof}
For $t\geq1$, let
\begin{equation*}
f(t)=2-\frac{2}{t+4}-2^{\frac{t}{t+1}}.
\end{equation*}
 {First,}
\begin{equation}\label{A1}
 f(1)=\frac{8}{5}-2^{\frac{1}{2}}>0,\quad
 \lim\limits_{t\rightarrow+\infty}f(t)=0^-.
\end{equation}
By direct calculations, we have
\begin{equation*}
 f'(t)=\frac{2}{(t+4)^2}-2^{\frac{t}{t+1}}\frac{1}{(t+1)^2}\log 2=\frac{2^{\frac{t}{t+1}}}{(t+4)^2}\left[2^{\frac{1}{t+1}}-\bigg(\frac{t+4}{t+1}\bigg)^2\log 2\right].
\end{equation*}
For $t\geq1$, denote
\begin{equation*}
g(t)=2^{\frac{1}{t+1}}-\bigg(\frac{t+4}{t+1}\bigg)^2\log 2.
\end{equation*}
 Then
\begin{equation}\label{A2}
 g(1)=2^{\frac{1}{2}}-\frac{25}{4}\log 2<2-6\log 2=\log \frac{e^2}{64}<0,\quad \lim\limits_{t\rightarrow+\infty}g(t)=1-\log2>0,
\end{equation}
and
\begin{equation*}
 g'(t)=-2^{\frac{1}{t+1}}\frac{1}{(t+1)^2}\log 2+2\frac{t+4}{t+1}\frac{3}{(t+1)^2}\log 2=\frac{\log 2}{(t+1)^2}\left[6\frac{t+4}{t+1}-2^{\frac{1}{t+1}}\right].
\end{equation*}
Since for all $t\geq1$,
\begin{equation*}
 6\frac{t+4}{t+1}>6>2^{\frac{1}{2}}\geq2^{\frac{1}{t+1}},
\end{equation*}
{we have $g'(t)>0$, so $g$ is strictly increasing on $[1,\infty)$. By \eqref{A2}, there is a unique $t_0>1$ such that $g(t_0)=0$, with $g<0$ on $[1,t_0)$ and $g>0$ on $(t_0,\infty)$. Hence $f$ decreases on $[1,t_0)$ and increases on $(t_0,\infty)$. Combining this monotonicity with \eqref{A1} yields a unique zero $t_1\in(1,t_0)$. Numerically, $t_1\approx{4.624}$.}
\end{proof}


\addcontentsline{toc}{section}{References}

\begin{thebibliography}{99}
\bibitem{A76} T. Aubin,
Probl\`{e}mes isop\'{e}rim\'{e}triques et espaces de Sobolev,
{\em J. Differential Geometry}, {\bf 11} (1976): 573-598.



\bibitem{BE91} G. Bianchi, H. Egnell,
A note on the Sobolev inequality,
{\em J. Funct. Anal.}, {\bf 100} (1991): 18-24.


\bibitem{BL85} H. Brezis, E.H. Lieb,
Sobolev inequalities with remainder terms,
{\em J. Funct. Anal.}, {\bf 62} (1985): 73-86.

\bibitem{C17} E.A. Carlen,
Duality and stability for functional inequalities, {\em Ann. Fac. Sci. Toulouse Math.},  {\bf 26}
(2017): 319-350.

\bibitem{CGK25} S. Chakraborty, M. Ghosh, D. Karmakar,
Existence of Bianchi-Egnell stability extremizer for the Hardy-Sobolev inequality,
{\em preprint}, (2025): arXiv:2505.07039.



\bibitem{CFW13} S.B. Chen, R.L. Frank, T. Weth,
Remainder terms in the fractional Sobolev inequality,
{\em Indiana Univ. Math. J.}, {\bf 62} (2013): 1381-1397.


\bibitem{CLT24'} L. Chen, G.Z. Lu, H.L. Tang,
Stability of Hardy-Littlewood-Sobolev inequalities with explicit lower bounds, {\em Adv. Math.}, {\bf 450} (2024): 109778.

\bibitem{CLT24} L. Chen, G.Z. Lu, H.L. Tang,
Optimal stability of Hardy-Littlewood-Sobolev and Sobolev inequalities of arbitrary orders with dimension-dependent constants,  {\em Math. Ann.}, {\bf 394} (2026): 77.



\bibitem{CLT25} L. Chen, G.Z. Lu, H.L. Tang,
Optimal asymptotic lower bound for stability of fractional Sobolev inequality and the global stability of log-Sobolev inequality on the sphere, {\em Adv. Math.}, {\bf 479} (2025): 110438.

\bibitem{CLTW25} L. Chen, G.Z. Lu, H.L. Tang, B.H. Wang,
Asymptotically sharp stability of Sobolev inequalities on the Heisenberg group
with dimension-dependent constants, {\em J. Math. Pures Appl.}, {\bf 206} (2026): 103832.



\bibitem{CW26} W.J. Chen, Z.X. Wang,
Remainder terms and sharp
quantitative stability for a nonlocal Sobolev inequality on the Heisenberg group,
{\em preprint}, (2026): arXiv:2602.08375.

\bibitem{DK23}
N. De Nitti, T. K\"{o}nig,
Stability with explicit constants of the critical points of the fractional Sobolev inequality and applications to fast diffusion,
{\em J. Funct. Anal.}, {\bf 285} (2023): 110093.



\bibitem{DLYZ25} S.B. Deng, W.S. Luo, M.B. Yang, {X.Y. Zhang},
Some critical fractional Hartree equations: nondegeneracy of the positive solutions and its applications, {\em Sci. China Math.}, {\bf 68} (2025): 2651-2674.

\bibitem{DT23} S.B. Deng, X.L. Tian,
On the stability of Caffarelli-Kohn-Nirenberg inequality in $\mathbb{R}^2$,
{\em preprint}, (2023): arXiv:2308.04111.


\bibitem{DT25} S.B. Deng, X.L. Tian,
On the stability of a nonlocal Sobolev inequality: existence of minimizers,
{\em preprint}, (2025).

\bibitem{DTW25} S.B. Deng, X.L. Tian, J.C. Wei,
Existence of minimizers of the stability constant of Caffarelli-Kohn-Nirenberg inequality,
{\em Proc. Roy. Soc. Edinburgh Sect. A}, (2025), published online first,
DOI: https://doi.org/10.1017/prm.2025.10050.


\bibitem{DTYZ23} S.B. Deng, X.L. Tian, M.B. Yang, S.N. Zhao,
Remainder terms of a nonlocal Sobolev inequality,
{\em Math. Nachr.}, {\bf 297} (2024): 1652-1667.



\bibitem{DEFF25}
J. Dolbeault, M.J. Esteban, A. Figalli, R.L. Frank,
Sharp stability for Sobolev and log-Sobolev inequalities, with optimal dimensional dependence,
{\em Camb. J. Math.}, {\bf 13} (2025): 359-430.


\bibitem{DY19} L.L. Du, M.B. Yang,
Uniqueness and nondegeneracy of solutions for a critical nonlocal equation,
{\em Discrete Contin. Dyn. Syst.}, {\bf 39} (2019): 5847-5866.

\bibitem{FN19} A. Figalli, R. Neumayer,
Gradient stability for the Sobolev inequality: the case $p\geq2$, {\em J. Eur. Math. Soc.},
{\bf 21} (2019): 319-354.

\bibitem{FZ22} A. Figalli, Y.R.Y. Zhang,
Sharp gradient stability for the Sobolev inequality,
{\em Duke Math. J.}, {\bf 171} (2022): 2407-2459.

\bibitem{FS74} G.B. Folland, E.M. Stein,
Estimates for the $\bar{\partial}_b$ complex and analysis on the Heisenberg group,
{\em Comm. Pure Appl. Math.}, {\bf 27}
(1974): 429-522.

\bibitem{FL12} R.L. Frank, E.H. Lieb,
Sharp constants in several inequalities on the Heisenberg group,
{\em Ann. of Math.}, {\bf 176} (2012): 349-381.

\bibitem{GS20} D. Goel, K. Sreenadh,
 Existence and nonexistence results for Kohn Laplacian with Hardy-Littlewood-Sobolev critical exponents, {\em J. Math. Anal. Appl.}, {\bf 486} (2020): 123915.


\bibitem{GHPS19} L. Guo, T.X. Hu, S.J. Peng, W. Shuai,
Existence and uniqueness of solutions for Choquard equation involving Hardy-Littlewood-Sobolev critical exponent, {\em Calc. Var. Partial Differential Equations}, {\bf 58} (2019): 128.

\bibitem{HL28} G.H. Hardy, J.E. Littlewood,
Some properties of fractional integrals, {\em Math. Z.}, {\bf 27} (1928): 565-606.

\bibitem{JL88} D. Jerison, J.M. Lee,
Extremals for the Sobolev inequality on the Heisenberg group and the CR Yamabe problem,
{\em J. Amer. Math. Soc.}, {\bf 1} (1988): 1-13.

\bibitem{K23}
T. K\"{o}nig, On the sharp constant in the Bianchi-Egnell stability inequality,
{\em Bull. Lond. Math. Soc.}, {\bf 55} (2023): 2070-2075.



\bibitem{K25}
T. K\"{o}nig, Stability for the Sobolev inequality: existence of a minimizer,
{\em J. Eur. Math. Soc.}, (2025): DOI 10.4171/JEMS/1582.


\bibitem{LLTX23} X.M. Li, C.X. Liu, X.D. Tang, G.X. Xu,
The nondegeneracy of positive bubble solutions for generalized
energy-critical Hartree equations, {\em Sci. China Math.},
(2026): https://doi.org/10.1007/s11425-025-2549-6.



\bibitem{Li26} J.G. Li,
Critical Geller equations on complex hyperbolic space: nondegeneracy, geodesic symmetry, and global compactness,
{\em preprint}, (2026): arXiv:2608.07240.


\bibitem{L83}
E.H. Lieb,
Sharp constants in the Hardy-Littlewood-Sobolev and related inequalities, {\em Ann. of Math.}, {\bf 118} (1983): 349-374.


\bibitem{LL01}  E.H. Lieb, M. Loss,
{\em Analysis. Second edition}, Graduate Studies in Mathematics, 14, American Mathematical Society,
 Providence, RI, (2001).

 \bibitem{L85} P.L. Lions,
  The concentration-compactness principle in the calculus of variations. The limit case. II,
  {\em Rev. Mat. Iberoamericana}, {\bf 1} (1985): 45-121.


\bibitem{LZ15} H.P. Liu, A. Zhang,
Remainder terms for several inequalities on some groups of Heisenberg-type,
{\em Sci. China Math.}, {\bf 58} (2015): 2565-2580.


\bibitem{L05} A. Loiudice,
Improved Sobolev inequalities on the Heisenberg group,
{\em Nonlinear Anal.},
{\bf 62} (2005): 953-962.

\bibitem{LW2000}
G.Z. Lu, J.C. Wei,  On a Sobolev inequality with remainder terms, {\em Proc. Amer. Math. Soc.}, \textbf{128} (2000): 75-84.

\bibitem{LYZ25} Q.K. Lu, M.B. Yang, S.N. Zhao,
Remainder terms, profile decomposition and sharp quantitative stability in the fractional
nonlocal Sobolev-type inequality with $n>2s$, {\em preprint}, (2025): arXiv:2503.06636.

\bibitem{MU02} A. Malchiodi, F. Uguzzoni,
A perturbation result for the Webster scalar curvature problem on the CR sphere,
{\em J. Math. Pures Appl.}, {\bf 81} (2002): 983-997.


\bibitem{N20} R. Neumayer,
A note on strong-form stability for the Sobolev inequality,
{\em Calc. Var. Partial Differential Equations}, {\bf 59} (2020): 25.

\bibitem{P16} I.D. Platis,
Quasiconformal mappings on the Heisenberg group: an overview,
IRMA Lect. Math. Theor. Phys., {\bf 27}, Eur. Math. Soc., Z\"{u}rich, (2016).



\bibitem{T76}  G. Talenti,
Best constant in Sobolev inequality, {\em Ann. Mat. Pura Appl.}, {\bf 110} (1976): 353-372.

\bibitem{TZZ24} Z.W. Tang, B.W. Zhang, Y.C. Zhang,
Existence of a minimizer for the Bianchi-Egnell inequality on the Heisenberg group,
{\em J. Geom. Anal.}, {\bf 34} (2024): 148.


\bibitem{WW24} J.C. Wei, Y.Z. Wu,
Stability of the Caffarelli-Kohn-Nirenberg inequality: the existence of minimizers,
{\em Math. Z.}, {\bf 308} (2024): 64.

\bibitem{YZ251} M.B. Yang, S.J. Zhang,
Nondegeneracy of positive solutions for critical Hartree equation on Heisenberg group and its applications,
{\em preprint}, (2025): arXiv:2508.07719.

\bibitem{YZ25} W.W. Ye, X.Y. Zhang,
On the stability of a version of nonlocal Sobolev inequality, {\em Bull. Math. Sci.}, {\bf 15} (2025): 2450012.


\bibitem{Z26} Q. Zhang,
Existence of extremals for stability of nonlocal Sobolev inequality,
{\em Adv. Nonlinear Stud.}, {\bf 26} (2026): 274-297.



\bibitem{ZWZLX25}
S.J. Zhang, J.L. Wang, Y. Zheng, X. Li, J.J. Xu,
Symmetry and uniqueness of the positive solution for the critical Hartree equation on the Heisenberg group, {\em preprint}, (2025): arXiv:2511.20264.


\bibitem{ZXW25} S.J. Zhang, J.J. Xu, J.L. Wang,
Quantitative stability of critical points for the nonlocal-Sobolev inequality in Heisenberg group,
{\em preprint}, (2025): arXiv:2508.08614.

\bibitem{ZZZ25} Y.F. Zhang, Y.X. Zhou, W.M. Zou,
Sharp quantitative stability for the fractional Sobolev trace inequality,
{\em Math. Z.}, {\bf 310}
(2025): 90-112.



\end{thebibliography}
\end{document}